\numberwithin{equation}{section}
\numberwithin{figure}{section}
\numberwithin{table}{section}
\newtheorem{theorem}{Theorem}[section]
\newtheorem{proposition}[theorem]{Proposition}
\newtheorem{lemma}[theorem]{Lemma}
\newtheorem{corollary}[theorem]{Corollary}
\theoremstyle{definition}
\newtheorem{definition}[theorem]{Definition}
\theoremstyle{remark}
\newtheorem{remark}[theorem]{Remark}
\newtheorem{example}[theorem]{Example}
\DeclareMathOperator{\Spec}{\textnormal{Spec}}
\DeclareMathOperator{\Ht}{\textnormal{Ht}}
\DeclareMathOperator{\supp}{Supp}
\DeclareMathOperator{\syz}{\textnormal{Syz}}
\newcommand{\kk}{\mathbb{K}}
\newcommand{\ZZ}{\mathbb{Z}}
\newcommand{\T}{\mathbb{T}}
\newcommand{\reg}{\textnormal{reg}}
\newcommand{\ee}{\boldsymbol{\sf e}}
\newcommand{\ff}{{\boldsymbol{\sf f}}}
\newcommand{\dd}{\boldsymbol{d}}
\newcommand{\mf}[2]{f_{#1, #2}}
\newcommand{\mee}[2]{\ee_{#1}^{#2}}
\newcommand{\msyz}[2]{{\sf S}_{#1}^{#2}}
\newcommand{\pC}[4]{C_{#1,#2}^{#3,#4}}
\newcommand{\pB}[6]{B_{#1,#2,#3}^{#4,#5,#6}}
\newcommand{\ridast}[1]{\buildrel {#1}^{\ast}\over \longrightarrow}
\newcommand{\cridast}[1]{\buildrel {#1}^{\ast}\over \longrightarrow_\star}
\newcommand{\MFFunctor}[1]{\underline{\mathbf{Mf}}_{#1}}
\newcommand{\MFScheme}[1]{\mathbf{Mf}_{#1}}
\newcommand{\MFSchemeMin}{\mathbf{T}}
\newcommand{\MSyzFunctor}[1]{\underline{\mathbf{Mf}}^{\textsc{syz}}_{#1}}
\newcommand{\MSyzScheme}[1]{\mathbf{Mf}^{\textsc{syz}}_{#1}}
\newcommand{\MResFunctor}[1]{\underline{\mathbf{Mf}}^{\textsc{res}}_{#1}}
\newcommand{\MResScheme}[1]{\mathbf{Mf}^{\textsc{res}}_{#1}}
\newcommand{\MSyzFunctorDue}[1]{\underline{\mathbf{Mf}}^{\pi_2}_{#1}}
\newcommand{\MSyzSchemeDue}[1]{\mathbf{Mf}^{\pi_2}_{#1}}
\newcommand{\PP}[1]{\mathcal P_{#1}}
\newcommand{\HTemph}[1]{\uwave{#1}}
\newtcbox{\htbox}{on line,colback=black!35,colframe=black, arc=0pt,outer arc=0pt,boxsep=0pt,left=2pt,right=2pt,top=2pt,bottom=2pt,
boxrule=0.25pt,bottomrule=0.25pt,toprule=0.25pt,inherit height}
\begin{document}

%\title{Computing marked families via syzygy forecasting}
\title{Free resolutions and marked families}

\author[C.~Bertone]{Cristina Bertone}
\address{Dipartimento di Matematica \lq\lq G.~Peano\rq\rq\\ Universit\`a degli Studi di Torino\\ 
         Via Carlo Alberto 10\\ 10123 Torino\\ Italy.}
\email{\href{mailto:cristina.bertone@unito.it}{cristina.bertone@unito.it}}
\urladdr{\url{https://sites.google.com/view/cristinabertone/}}

\author[F.~Cioffi]{Francesca Cioffi}
\address{Dipartimento di Matematica e Applicazioni \lq\lq R. Caccioppoli\rq\rq \\ Universit\`a degli Studi di Napoli Federico II\\ 
         Complesso Universitario di Monte S. Angelo\\ Via Cintia \\ 80126 Napoli \\ Italy.}
\email{\href{mailto:cioffifr@unina.it}{cioffifr@unina.it}}
%\urladdr{\url{}}

\author[P.~Lella]{Paolo Lella}
\address{Dipartimento di Matematica\\ Politecnico di Milano\\ 
         Piazza Leonardo da Vinci 32\\ 20133 Milano\\ Italy.}
\email{\href{mailto:paolo.lella@polimi.it}{paolo.lella@polimi.it}}
\urladdr{\url{http://www.paololella.it/}}

%\subjclass[2010]{14C05, 13P10, 05E40}

%\keywords{}

%\thanks{}
\renewcommand{\baselinestretch}{1.2}

\begin{abstract}
Let $\kk$ be a field and $A$ a Noetherian $\kk$-algebra. In a paper of 2020, M.~Albert, C.~Bertone, M.~Roggero and W.~M.~Seiler proved that, given a quasi-stable module $U \subset R^m$ with $R=\kk[x_0,\dots,x_n]$, any submodule $M\subseteq (R\otimes A)^m$ generated by a marked basis over $U$ admits a special free resolution described in terms of marked bases as well, called the {\em $U$-resolution of $M$}. In this paper, we first investigate the minimality of the $U$-resolution and its structure. When $M$ is an ideal and $A=\kk$, we show that $M$ is componentwise linear if and only if its $U$-resolution is minimal, up to a linear change of variables. Then, adopting a functorial approach to the construction of the $U$-resolution, we prove that certain functors naturally associated with the resolution are  isomorphic. These isomorphisms arise from the fact that the marked basis of the $i$-th syzygy module in the $U$-resolution can be expressed in terms of the coefficients of the marked basis of~$M$. Moreover, when $M$ is an ideal of depth at least 2, this correspondence can be reversed: in this case, the marked basis of $M$ itself can be written in terms of the coefficients of the marked basis of its first syzygy module.
\end{abstract}

  \keywords{Marked basis, marked scheme, syzygies, free resolutions, componentwise linear ideal, Hilbert scheme}
  \subjclass{13P10, 13D02, 14C05, 14J10}
\maketitle

\section{Introduction}
Let $\kk$ be a field, and let $R = \kk[x_0, \dots, x_n]$ denote the polynomial ring in $n+1$ variables with the standard grading.
Consider a (monomial) quasi-stable submodule $U$ of the free graded $R$-module $R^m(-\boldsymbol{d})$, where $\boldsymbol{d} = (d_1, \dots, d_m) \in \mathbb{Z}^m$.
For every Noetherian $\kk$-algebra $A$, we can consider a set of homogeneous marked elements in $R_A^m(-\boldsymbol{d}) = (R \otimes_\kk A)^m(-\boldsymbol{d})$ over a special monomial set generating $U$, called the Pommaret basis of $U$.
Let $M$ be the submodule in $R_A^m (- \dd)$ generated by this marked set (we omit \lq\lq over $U$\rq\rq~when not strictly needed). If for every $s\geqslant 0$, the $A$-module  $R_A^m (- \dd)_s$ is the direct sum of the $A$-submodule generated by the degree-$s$ part of $M$ and the  $A$-submodule generated by the terms of degree $s$ outside $U$, we say that the marked set generating $M$ is a marked basis. 

Over the past fifteen years, marked bases have been intensively studied in a series of papers (for example, see \cite{CR11, BCLR13, BLR13, LR2, CMR2015, macaulay} and the references therein), especially in the setting of polynomial ideals. They have proven to be an effective tool for both theoretical and  computational investigations related to the study of Hilbert schemes. More recently, in the paper \cite{ABRS}, the study of marked bases and of their functorial features has been extended to submodules of free graded modules with applications to Quot schemes as well. 

Starting from analogous contributions for polynomial ideals,
in \cite[Theorem 5.1]{ABRS} it was proved  that  the family  of submodules %in $R_A^m (- \dd)$, 
having a marked basis over a quasi-stable module $U$, denoted by $\MFFunctor{U}(A)$, can be described by means of polynomial equations whose indeterminates are the $x$-coefficients in a generic marked set over $U$. This construction is functorial (see \cite{LR2}), and the functor associating to each Noetherian $\kk$-algebra $A$ the set $\MFFunctor{U}(A)$ is representable and represented by an affine scheme denoted by $\MFScheme{U}$.
  
Furthermore, a module generated by a marked basis over the quasi-stable module $U$ has a free resolution similar to that for stable ideals given in \cite{EK}, that we call $U$-resolution (see Theorem \ref{thm:freeRes}, which was stated and proved in \cite{ABRS}).

The equations defining the marked scheme $\MFScheme{U}$ are obtained by some features of marked bases, which are interesting from an algorithmic and algebraic point of view. Indeed, there are several parallels between Gr\"obner bases and marked bases over quasi-stable modules, with the key distinction that the theory of marked bases and its associated reduction algorithms are term-order free. The term order that allows to have a Noetherian reduction process with Gr\"obner bases is replaced by properties of the quasi-stable ideal given by its Pommaret basis: the disjoint cones decomposition of the set of terms in $U$, and the existence of an induced well-founded order on $\T$ that makes a marked basis over a quasi-stable module an ordered structure (see \cite{CMR2019}).

As already observed, from the geometric point of view
marked bases proved to be a powerful tool to investigate Quot and Hilbert schemes. More precisely, if we consider a truncated quasi-stable module $U$, with truncation degree sufficiently large, and $p(t)$ is the Hilbert polynomial of $R^m(-\dd)/U$,
then the family of marked bases over $U$ is an open subset of the Quot scheme that parametrizes quotients of $R_A^m(-\dd)$ having Hilbert polynomial $p(t)$. Indeed, $p(t)$ is also the Hilbert polynomial of  $R_A^m(-\dd)/M$, for every $M$ generated by a marked basis over the quasi-stable module $U$. Furthermore, up to some change of coordinates, these open subsets cover the whole Quot scheme \cite[Propositions 10.1, 10.2, 10.3]{ABRS}. This open covering of the Hilbert scheme has enabled the study of several meaningful loci, as well as investigations of Hilbert schemes over bases other than $\mathbb{P}^n$, and even the disproof of conjectures (see \cite{SmoothGore,BCOS, GGGL} and references therein).

Nevertheless, two modules $M_1$ and $M_2$ belonging to $\MFFunctor{U}(A)$, with $U$ a  quasi-stable module, might be quite different. Indeed, although they both have a $U$-resolution, the respective minimal free resolution can remarkably differ; in general, Betti numbers, Castelnuovo-Mumford regularity and projective dimension are not the same for the two modules, although the last two invariants are upper bounded by those of $U$ (see  Corollary \ref{cor:maggiorazioni} and the example in Section~\ref{sec:finalEx}). All this implies for instance that, if we consider $m=1$, the projective schemes in $\mathbb P^n$ defined by $M_1$ and $M_2$ are geometrically quite different, although $M_1$ and $M_2$ have the same Hilbert polynomial.

\medskip 

Motivated by the need to better understand the properties of modules belonging to $\MFFunctor{U}(A)$, in this paper we investigate the syzygies of marked bases over quasi-stable modules.
 We exhibit a set of generators for the first syzygies of a marked basis and we prove that it is possible to predict the support of these syzygies (see Corollaries \ref{cor:suppSyz} and \ref{cor:forecastNoTerms}). Since it is possible to have a marked basis on a suitable stable module for the first syzygies of a marked basis over a quasi-stable module (see Theorem \ref{th:fundamental}), we can iterate the process and get a marked basis for the $t$-th module appearing in the $U$-resolution of a module in $\MFFunctor{U}(A)$, forecasting the support of the elements in the marked basis of the $t$-th syzygies.

In Section \ref{sec:minimal-resolution} we study  whether the $U$-resolution of a module $M$ in $\MFFunctor{U}(A)$ is minimal. Indeed, even if the $U$-resolution of $U$ is non-minimal, in $\MFFunctor{U}(A)$ we can find modules having minimal $U$-resolution, see Example \ref{ex:minimality-quasi-stable}. We show that if the matrix of the $i$-th morphism in a $U$-resolution contains no non-zero constant entries (with respect to the variables $x_i$), then the same property holds for the matrices of all subsequent morphisms (Theorem \ref{prop:firstSyzForMin}). This means that, in the case $A=\kk$, if the marked basis of the $i$-th syzygies in a $U$-resolution is a minimal generating set, then also the marked bases of the following syzygy modules are minimal. In particular, the $U$-resolution is minimal if and only if the $U$-marked basis of the module $M$ is a minimal generating set of it (Corollary \ref{cor:minimality-betti-numbers}).
Since every Gr\"obner basis whose initial module is quasi-stable is also a marked basis in our setting, we can then easily conclude that the marked bases whose $U$-resolution is minimal (when $U$ is not stable) are not Gr\"obner bases for any term order (Proposition \ref{prop:term order}).

We further prove that the minimality of the $U$-resolution of $M \in \MFFunctor{U}(\kk)$ is equivalent (up to a general linear change of coordinates) to the property that $M$ is componentwise linear (Theorems \ref{thmn:minimal-compwLinear} and \ref{thmn:compwLinear-minimal}). This generalizes \cite[Theorem 8.2]{Seiler2009II} and \cite[Theorem~1.1]{AHH} to marked bases over a field in any characteristic.

In Section \ref{sec:SyzFunct}, following the functorial approach to the study of marked schemes, see for instance \cite{LR2}, we define the $U$-marked  syzygy functor and the $U$-marked resolution functor (Definition \ref{def:FuncSyzRes}).
It is immediate from their definitions, by Yoneda's Lemma, that both the scheme representing the marked syzygy functor and that representing the marked resolution functor can be isomorphically canonically projected on $\MFScheme{U}$ (Corollary~\ref{cor:isomorfismo}). We further  give an explicit construction of the affine schemes representing these functors (Theorems \ref{thm:reprFuncSyz} and \ref{prop:elimSyzn}), similarly to the construction of $\MFScheme{U}$. Alternatively, we can define the scheme representing the $U$-marked  syzygy functor imposing that the multiplication of the two matrices representing the first two morphisms in a generic $U$-resolution is $0$ (Corollary \ref{cor:prodotto matrici}), and apply the same argument for the $U$-marked resolution functor.

Finally, we study the projection of the scheme representing the marked syzygy functor onto the other factors. Under the hypothesis that $U$ is a monomial ideal having depth at least~$2$, we prove  that the marked syzygy functor can be also canonically and isomorphically projected to the $U$-marked second projection functor, which is the functor represented by the scheme parametrizing marked bases of fundamental syzygies of $U$ (see Definition \ref{def:second factor} and Theorem~\ref{th:isomorphism 2}). The proof relies on results in \cite{BE1974}. 

Corollary~\ref{cor:isomorfismo} and Theorem~\ref{th:isomorphism 2} relate the parameters appearing as coefficients of the syzygies and those appearing as coefficients of the polynomials in a marked basis.  The first isomorphic projection allows the syzygy coefficients to be expressed in terms of the polynomial coefficients, while the second isomorphic projection, under certain conditions, allows the reverse.

There are several examples throughout the paper, and we conclude by working out a complete example in Appendix \ref{sec:finalEx}. In this example, we describe how to characterize the locally Cohen-Macaulay curves of $\mathbf{Hilb}^{3t+2}(\mathbb{P}^3)$ by considering the $J$-resolution of a generic ideal in $\MFScheme{J}$ and studying the Hartshorne-Rao module, for a suitable quasi-stable ideal $J$. Along this example, we use the fact that Corollary \ref{cor:isomorfismo} ensures that we can rewrite the parameters appearing as coefficients in a generic marked basis of the first syzygies of a marked set in terms of the parameters appearing as coefficients in the marked set.

%%%%%%%%%%%%%%%%%%%%%%%%%%%%%%%%%%%%%
%% Marked bases and their syzygies %%
%%%%%%%%%%%%%%%%%%%%%%%%%%%%%%%%%%%%%

\section{Marked bases and their syzygies}
\label{sec:marked bases and syzygies}

Let $\kk$ be a field and $A$  be a Noetherian $\kk$-algebra with $1_A=1_{\kk}$. We consider the polynomial ring $R=\kk[x_0,\dots,x_n]$ and $R_A:=R\otimes_{\kk}A=A[x_0,\dots,x_n]$, with $x_0<\dots<x_n$. A {\em term} in $R$ is a power product $x^{\underline{\alpha}} := x_0^ {\alpha_0}\cdots x_n^{\alpha_n}$, where $\underline{\alpha} = (\alpha_0,\ldots,\alpha_n) \in \ZZ^{n+1}_{\geqslant 0}$, and the set of all terms in $R$ is denoted by $\T$. For every $x^{\underline {\alpha}} \in \T$, $x^{\underline {\alpha}} \neq 1$, we denote by $\max(x^{\underline {\alpha}})$ the largest variable dividing $x^{\underline {\alpha}}$ and by $\min(x^{\underline {\alpha}})$ the smallest variable dividing it. The variables less than or equal to $\min (x^{\underline {\alpha}})$ are called the {\em multiplicative variables of $x^{\underline {\alpha}}$}. The remaining variables are the {\em non-multiplicative variables of $x^{\underline {\alpha}}$}.

We use the standard grading on $R_A$, i.e.~$\deg(x_j)=1$, for all $j\in \{0,\dots,n\}$, and $\deg(a)=0$, for all $a\in A$. Thus, $\deg(x^{\underline {\alpha}})=\vert \underline{\alpha}\vert=\sum \alpha_i$.

Let $m$ be a positive integer and $\boldsymbol d = (d_1,\dots,d_m)\in \mathbb Z^m$. More generally, we consider the free graded $R_A$-module $R_A^m (- \dd):=\bigoplus_{i=1}^m R_A (-d_i)\ee_i$, with standard basis $\{\ee_1,\dots,\ee_m\}$. If $\dd=(0,\dots,0)$, we simply write $R_A^m$ instead of $R_A^m (- \dd)$. 
A term of $R_A^m (- \dd)$ is an element of the form $x^{\underline {\alpha}}\, \ee_i$, where $i\in\{1\,\dots,m\}$ and $x^{\underline {\alpha}} \in \T$. The degree of a term $x^{\underline {\alpha}}\, \ee_i$ is $\deg(x^{\underline {\alpha}}\, \ee_i)=\vert \underline{\alpha}\vert+d_i$.

The set of the terms in $R_A^m (- \dd)$ is denoted by $\T^m(- \dd)$, or simply by $\T^m$ if the grading is irrelevant.

If $S$ is a  subset of $R_A^m (- \dd)$, we denote by $\langle S\rangle$ the $A$-module generated by $S$, and by $(S)$ the $R_A^m (- \dd)$-module generated by $S$. Furthermore, for every $t\in \mathbb Z$, we denote by $S_{t}$ the subset of $S$ consisting of homogeneous elements of degree $t$. Throughout this paper, 
 we consider graded finitely generated modules $M$ of  $R_A^m (- \dd)$, so that $M=\bigoplus_{t \in \mathbb Z}M_{t}$. We denote by $M_{\geqslant m}$ and $M_{\leqslant m}$ the direct sums $M=\bigoplus_{t \geqslant m }M_{t}$ and $M=\bigoplus_{t \leqslant m }M_{t}$.

For every element $f\in R_A^m (- \dd)$, we denote by $\supp(f)$ the set of terms appearing in $f$ with non-zero coefficient. 

A submodule $U \subset R_A^m (- \dd)$ is {\em monomial} if generated by terms in $\T^m(- \dd)$. A monomial submodule $U$ can be written as
\begin{equation}\label{eq:monidU}
U= \bigoplus_{k=1}^m J^{(k)} \ee_k
\end{equation}
where $J^{(k)}$ is the monomial ideal of $R_A$ generated by the terms $x^{\underline {\alpha}}$ such that $x^{\underline {\alpha}}\, \ee_k$ belongs to $U$.  A monomial module $U$ has a unique minimal set of term generators, called \emph{the monomial basis} of $U$, denoted by $\mathcal B_U$. More precisely, considering $U$ as in \eqref{eq:monidU},  $\mathcal B_U$ is the set $\bigcup_{k=1}^m  \mathcal B_{J^{(k)}}\ee_k$, where $\mathcal B_{J^{(k)}}$ is the minimal monomial basis of $J^{(k)}$. 
We denote by  $\mathcal N(U)\subseteq \T^m(-\dd)$ the set of terms not in $U$, called the {\em sous-escalier} of $U$. Note that  $\mathcal N(U)= \bigcup_{k=1}^m \mathcal N(J^{(k)}) \ee_k$.

\begin{definition} {\rm (\cite[Defintion 3.1]{ABRS}, \cite[Definition 2.1 and Example 2.4]{Seiler2009I})}
For each term $x^{\underline {\alpha}}$ in $\T$, we define its \emph{Pommaret cone} in $R_A$: if $x^{\underline{\alpha}}=1$, set $\mathcal C(1):=\T$; if $x^{\underline{\alpha}}\neq 1$, then the Pommaret cone of $x^{\underline{\alpha}}$ is
\[
\mathcal C(x^{\underline {\alpha}}):=
\{x^{\underline \delta} x^{\underline \alpha} \mid \delta_i=0, \, \forall \ x_i > \min(x^{\underline {\alpha}}) \}\subset \T .
\]
For every term $x^{\underline {\alpha}}\, \ee_k$ in $\T^m$, the \emph{Pommaret cone} of $x^{\underline {\alpha}}\, \ee_k$ in $R_A^m$ is the set
\[
\mathcal C(x^{\underline {\alpha}}\, \ee_k):=\mathcal C(x^{\underline {\alpha}})\ee_k= \left \{ x^{\underline{\delta}} x^{\underline {\alpha}} \, \ee_k\ \middle\vert\ x^{\underline{\delta}} x^{\underline {\alpha}} \in \mathcal C(x^{\underline {\alpha}})\right\}\subset \T \ee_k.
\]

A set $T$ of terms generating a monomial submodule $U$  in $R_A^m (- \dd)$ is called a {\em Pommaret basis} of $U$ if 
\[
U\cap \T^m = \bigsqcup_{x^{\underline {\alpha}}\, \ee_k \in T} \mathcal C(x^{\underline {\alpha}}\, \ee_k)
\]
\end{definition}

If a monomial submodule $U$ has a Pommaret basis, it is unique and denoted by $\PP{U}$.
    
\begin{definition}\label{def:qsIdeal} \cite[Definition 3.2 and Theorem 3.3]{ABRS}
A \emph{quasi-stable} module is a monomial module with a {\em finite} Pommaret basis, denoted by $\PP{U}$.  A monomial module $U$ is \emph{stable} if it is quasi-stable and its Pommaret basis $\PP{U}$ coincides with its monomial basis $\mathcal B_U$. 
\end{definition}

\begin{example}\label{ex:section2-part1}
The ideal $J = (x_2^2,x_1^2) \subset \kk[x_0,x_1,x_2]$ is quasi-stable, but not stable, because $x_2x_1^2 \in J$ while $x_2x_1^2 \notin \mathcal{C}(x_2^2) \sqcup \mathcal{C}(x_1^2)$. Nevertheless, $J\cap \T = \mathcal{C}(x_2^2) \sqcup\mathcal{C}(x_1^2) \sqcup \mathcal{C}(x_2x_1^2)$, hence $\mathcal{P}_{J} = \mathcal{B}_{J} \cup \{x_2x_1^2\}$. 

The ideal $J' = (x_2^2,x_2x_1,x_1^3)  \subset \kk[x_0,x_1,x_2]$ is stable, and $\mathcal{P}_{J'} = \mathcal{B}_{J'}$.

The ideal $(x_1^2,x_0^2) \subset \kk[x_0,x_1,x_2] $ is not quasi-stable.
\end{example}

\begin{definition}{\rm (\cite{RS}, \cite[Definition 4.2]{ABRS})}
A marked  element $\mf{\underline{\alpha}}{k}$ in $R_A^m (- \dd)$ is a homogeneous element with a distinguished term $x^{\underline {\alpha}}\, \ee_k\in\supp(\mf{\underline{\alpha}}{k})$ with coefficient $1_A$ in $\mf{\underline{\alpha}}{k}$. This term is called the \emph{head term} of $\mf{\underline{\alpha}}{k}$, denoted by $\Ht(\mf{\underline{\alpha}}{k})$, and highlighted with wavy underline $\HTemph{x^{\underline{\alpha}}\, \ee_k}$.
\end{definition}

\begin{definition}\label{def:MarkedSet} \cite[Definition 4.3]{ABRS} 
Let $U\subset R_A^m (- \dd)$ be a quasi-stable module. A {\em $U$-marked set} is a finite set $F \subset R_A^m (- \dd)$ of marked elements $\mf{\underline{\alpha}}{k}$ with pairwise distinct head terms $\Ht(\mf{\underline{\alpha}}{k}) =x^{\underline {\alpha}}\, \ee_k \in \PP{U}$, such that $\supp(\mf{\underline{\alpha}}{k}-x^{\underline {\alpha}}\, \ee_k) \subseteq  \mathcal N(U) $ and $\{\Ht(\mf{\underline{\alpha}}{k})\ \vert\ \mf{\underline{\alpha}}{k}\in F\} = \PP{U}$. 
A $U$-marked set $F$ is a {\em $U$-marked basis} of the module $(F)$ if, for all $s$, $\left(R_A^m (- \dd)\right)_s = ( F )_s \oplus \langle \mathcal N(U)_s \rangle$.
\end{definition}

Note that if a module is generated by a $U$-marked basis, then this basis is unique.

\begin{definition}\label{def:riscrittura}\cite[Definition 4.7]{ABRS}
Let $U\subseteq R_A^m (- \dd)$ be a quasi-stable module and $F$ a $U$-marked set.
We denote by $\ridast{F}$ the transitive closure of the relation $h \longrightarrow h -\lambda\, x^{\underline{\eta}}\, \mf{\underline{\alpha}}{k}$ among homogeneous elements of the same degree, where $x^{\underline{\eta}} x^{\underline {\alpha}}\, \ee_k$ is a term in $h$ with a non-zero coefficient $\lambda \in A$ and $x^{\underline{\eta}} x^{\underline {\alpha}} \,\ee_k \in \mathcal{C}(x^{\underline{\alpha}}\,\ee_k)$. We write $h \cridast{F} g$ if $h \ridast{F}g$ and $\supp(g)\subset \mathcal N(U)$; then $g$ is called the \emph{$F$-normal form} of $f$.
\end{definition}

The relation in Definition \ref{def:riscrittura} is Noetherian and confluent (see \cite[Propositions 4.8 and 4.11]{ABRS}). 

\bigskip 

From now on, let $U\subset R_A^m (- \dd)$ be a quasi-stable module with Pommaret basis $\PP{U}$. Let $F=\{\mf{\underline{\alpha}}{k}\}_{x^{\underline {\alpha}}\ee_k \in \PP{U}} \subset R_A^m (- \dd)$ be a $U$-marked set, where $\Ht(\mf{\underline{\alpha}}{k})=x^{\underline{\alpha}}\, \ee_k$.

\begin{proposition}\label{prop:riscritture}\cite[Proposition 4.11, Lemma 6.1]{ABRS}
With this notation, every element $f$ of $R_A^m (- \dd)$ can be uniquely written as
\begin{equation}\label{eq:every unique writing}
f=\sum_{x^{\underline{\gamma}}\, \mee{\ell}{} \in \PP{U}} P_{\underline{\gamma},\ell} \mf{\underline\gamma}{\ell}+g
\end{equation}
with $g\in \langle \mathcal N(U)\rangle$, $\mf{\underline{\gamma}}{\ell} \in F$, $P_{\underline{\gamma},\ell} \in R_A$ and $\supp(P_{\underline{\gamma},\ell}\cdot x^{\underline{\gamma}}) \subset \mathcal{C}(x^{\underline{\gamma}})$.

In particular, if $F$ is a $U$-marked basis, then every element $f \in (F)$ can be uniquely written as
\begin{equation}\label{eq:unique writing}
f=\sum_{x^{\underline{\gamma}}\, \mee{\ell}{} \in \PP{U}} P_{\underline{\gamma},\ell} \mf{\underline\gamma}{\ell}.
\end{equation}
\end{proposition}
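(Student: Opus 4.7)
My plan is to recast the proposition as the statement that a certain $A$-linear endomorphism of $R_A^m(-\dd)$ is an isomorphism. The disjoint cone decomposition $U \cap \T^m = \bigsqcup_{x^{\underline{\gamma}} \ee_\ell \in \PP{U}} \mathcal{C}(x^{\underline{\gamma}} \ee_\ell)$ guaranteed by the Pommaret basis induces a graded direct sum decomposition of free $A$-modules
\[
R_A^m(-\dd) \;=\; \bigoplus_{x^{\underline{\gamma}} \ee_\ell \in \PP{U}} \bigl\langle \mathcal{C}(x^{\underline{\gamma}} \ee_\ell)\bigr\rangle_A \;\oplus\; \bigl\langle \mathcal N(U)\bigr\rangle_A,
\]
so every $f$ admits a unique writing $\sum P_{\underline{\gamma},\ell}\cdot x^{\underline{\gamma}}\ee_\ell + g$ with $\supp(P_{\underline{\gamma},\ell}\cdot x^{\underline{\gamma}}) \subset \mathcal{C}(x^{\underline{\gamma}})$ and $g \in \langle \mathcal N(U)\rangle$. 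The proposition then reduces to showing that the $A$-linear map $\Psi$ defined by $P\cdot x^{\underline{\gamma}}\ee_\ell \mapsto P\cdot \mf{\underline{\gamma}}{\ell}$ on each cone summand, and the identity on $\langle \mathcal N(U)\rangle$, is bijective.

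For existence I would invoke the reduction algorithm: apply $\cridast{F}$ to $f$, which terminates by Noetherianity of the reduction relation (already established before the proposition). Each elementary step subtracts $\lambda\, x^{\underline{\eta}}\, \mf{\underline{\gamma}}{\ell}$ with $x^{\underline{\eta}} x^{\underline{\gamma}} \ee_\ell \in \mathcal{C}(x^{\underline{\gamma}}\ee_\ell)$, i.e.\ with $x^{\underline{\eta}}$ involving only multiplicative variables of $x^{\underline{\gamma}}$. Gathering these subtracted terms by head pair $(\underline{\gamma},\ell)$ yields polynomials $P_{\underline{\gamma},\ell}$ satisfying the required support condition, and the final remainder $g$ lies in $\langle \mathcal N(U)\rangle$ by definition of $\cridast{F}$.

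For uniqueness, two such writings of $f$ yield a relation $\sum Q_{\underline{\gamma},\ell}\,\mf{\underline{\gamma}}{\ell} = g'-g \in \langle \mathcal N(U)\rangle$ with $\supp(Q_{\underline{\gamma},\ell}\cdot x^{\underline{\gamma}}) \subset \mathcal{C}(x^{\underline{\gamma}})$. My plan is to use the well-founded order on $\T^m$ induced by the Pommaret structure (\cite{CMR2019}) and to pick a maximum element $\tau^{\ast} = x^{\underline{\eta}^{\ast}} x^{\underline{\gamma}^{\ast}} \ee_{\ell^{\ast}}$ in the finite set $\bigl\{ x^{\underline{\eta}} x^{\underline{\gamma}} \ee_\ell : x^{\underline{\eta}} \in \supp(Q_{\underline{\gamma},\ell})\bigr\}$. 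The coefficient of $\tau^{\ast}$ on the left-hand side then equals the coefficient of $x^{\underline{\eta}^{\ast}}$ in $Q_{\underline{\gamma}^{\ast},\ell^{\ast}}$, because (i) by disjointness of Pommaret cones no other head contribution can land at $\tau^{\ast}$, and (ii) any tail contribution $x^{\underline{\eta}}\, t$ with $t \in \supp(\mf{\underline{\gamma}}{\ell} - x^{\underline{\gamma}}\ee_\ell) \subset \mathcal N(U)$ is strictly less than $x^{\underline{\eta}} x^{\underline{\gamma}} \ee_\ell \leq \tau^{\ast}$ in the well-founded order. Since $\tau^{\ast} \in U$ while the right-hand side lies in $\langle \mathcal N(U)\rangle$, this coefficient must vanish, contradicting the choice of $\tau^{\ast}$; hence all $Q_{\underline{\gamma},\ell}=0$ and $g=g'$. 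For the final claim, if $F$ is a $U$-marked basis then $(F)_s \oplus \langle \mathcal N(U)_s\rangle = R_A^m(-\dd)_s$ by definition, so $(F)\cap \langle \mathcal N(U)\rangle = 0$ and any $f \in (F)$ forces $g=0$. I expect the delicate point to be verifying part (ii): one must rely on the compatibility of the order from \cite{CMR2019} with both multiplication by $x^{\underline{\eta}}$ and the property that in each marked element the head strictly dominates the tail terms; a naive degree comparison is insufficient because tail terms share the degree of the head.
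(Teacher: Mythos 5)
The paper itself gives no proof of this proposition: it is quoted verbatim from \cite[Proposition 4.11, Lemma 6.1]{ABRS}, so there is no internal argument to compare against. Your reconstruction is correct and is essentially the argument underlying the cited results: existence follows from Noetherianity of $\cridast{F}$ (recorded in the paper just before the statement), and uniqueness from the disjointness of the Pommaret cones together with the well-founded order of \cite{CMR2019}, whose defining property is exactly your point \emph{(ii)} --- that each tail contribution $x^{\underline{\eta}}t$ is strictly smaller than $x^{\underline{\eta}}x^{\underline{\gamma}}\,\ee_\ell$ --- and this is the very property that makes the reduction Noetherian, so it is not an additional hypothesis. One cosmetic adjustment: since that order need not be total, choose $\tau^{\ast}$ \emph{maximal} rather than maximum; your comparison $x^{\underline{\eta}}t \prec x^{\underline{\eta}}x^{\underline{\gamma}}\,\ee_\ell \preceq$ (an element of the set) still rules out $x^{\underline{\eta}}t = \tau^{\ast}$, so the argument is unaffected.
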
 

\begin{theorem}\label{th:caratterizzazione} \cite[Theorem 4.18]{ABRS} A $U$-marked set $F$ is a $U$-marked basis if, and only if, for each element $\mf{\underline{\alpha}}{k} \in F$ and each non-multiplicative variable $x_i$ of $x^{\underline{\alpha}}$, $x_i \mf{\underline{\alpha}}{k}\cridast{F} 0$.
\end{theorem}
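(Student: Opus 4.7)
The forward implication is immediate: if $F$ is a $U$-marked basis and $x_i$ is non-multiplicative for $x^{\underline{\alpha}}$, then $x_i\,\mf{\underline{\alpha}}{k}\in (F)$, and its $F$-normal form $g$ lies in $(F)\cap \langle \mathcal N(U)\rangle=0$ by Definition~\ref{def:MarkedSet}.

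For the converse, the plan is to reformulate the direct-sum condition via the normal form map. By the Noetherianity and confluence of the $F$-reduction recalled after Definition~\ref{def:riscrittura}, Proposition~\ref{prop:riscritture} produces an $A$-linear normal form map $\mathrm{NF}\colon R_A^m(-\dd)\to \langle \mathcal N(U)\rangle$, restricting to the identity on $\langle \mathcal N(U)\rangle$ and sending each $f$ to the remainder $g$ in the expansion \eqref{eq:every unique writing}. The direct-sum condition  $R_A^m(-\dd)_s = (F)_s\oplus \langle \mathcal N(U)_s\rangle$ is thus equivalent to $\ker(\mathrm{NF})=(F)$. The inclusion $\ker(\mathrm{NF})\subseteq (F)$ is automatic from the reduction process, so the key task is to prove, by $A$-linearity, that
\[
\mathrm{NF}\bigl(x^{\underline{\eta}}\,\mf{\underline{\gamma}}{\ell}\bigr) = 0 \qquad \text{for all } \mf{\underline{\gamma}}{\ell}\in F \text{ and } x^{\underline{\eta}}\in \T.
\]

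I would establish this by Noetherian induction on $x^{\underline{\eta}}$ with respect to the well-founded order on $\T^m$ coming from the Pommaret decomposition of $U$ (mentioned in the Introduction, cf.~\cite{CMR2019}). If $x^{\underline{\eta}} x^{\underline{\gamma}}\in \mathcal C(x^{\underline{\gamma}})$, then $x^{\underline{\eta}}\,\mf{\underline{\gamma}}{\ell}$ reduces to $0$ in a single step using $\mf{\underline{\gamma}}{\ell}$ itself. Otherwise, pick a non-multiplicative variable $x_i$ of $x^{\underline{\gamma}}$ dividing $x^{\underline{\eta}}$ and write $x^{\underline{\eta}}\,\mf{\underline{\gamma}}{\ell} = (x^{\underline{\eta}}/x_i)\bigl(x_i\,\mf{\underline{\gamma}}{\ell}\bigr)$. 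The hypothesis $x_i\,\mf{\underline{\gamma}}{\ell}\cridast{F}0$ yields, by accumulating the elementary reduction steps of Definition~\ref{def:riscrittura}, an expansion $x_i\,\mf{\underline{\gamma}}{\ell} = \sum \widetilde P_{\underline{\gamma}',\ell'}\,\mf{\underline{\gamma}'}{\ell'}$ in which every $\widetilde P_{\underline{\gamma}',\ell'}$ satisfies the multiplicative support condition $\supp(\widetilde P_{\underline{\gamma}',\ell'}\cdot x^{\underline{\gamma}'})\subset \mathcal C(x^{\underline{\gamma}'})$. Substituting and invoking the $A$-linearity of $\mathrm{NF}$ reduces the computation to normal forms of terms $(x^{\underline{\eta}}/x_i)\,x^{\underline{\beta}}\,\mf{\underline{\gamma}'}{\ell'}$, which fall under the inductive hypothesis.

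The main obstacle is verifying that each pair $\bigl((x^{\underline{\eta}}/x_i)\,x^{\underline{\beta}},\,x^{\underline{\gamma}'}\ee_{\ell'}\bigr)$ produced by the substitution is strictly smaller than $(x^{\underline{\eta}},x^{\underline{\gamma}}\ee_\ell)$ in the chosen well-founded order. The Pommaret order is built precisely so that non-multiplicative multiplications strictly decrease the class, while the replacement of the single non-multiplicative multiplication by multiplicative ones strictly lowers the head-term component; combining the two estimates should yield the required strict decrease and close the Noetherian induction.
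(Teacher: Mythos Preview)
The paper does not give its own proof of this statement: Theorem~\ref{th:caratterizzazione} is stated with a citation to \cite[Theorem~4.18]{ABRS} and no argument is supplied in the present paper. There is therefore nothing here to compare your attempt against.

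For what it is worth, your outline is the standard strategy and is essentially how the result is proved in \cite{ABRS}: reduce the direct-sum condition to showing $\mathrm{NF}(x^{\underline{\eta}}\mf{\underline{\gamma}}{\ell})=0$ for every monomial multiplier, and handle non-multiplicative factors by substituting the hypothesis $x_i\mf{\underline{\gamma}}{\ell}\cridast{F}0$ and inducting. The one place where your write-up is genuinely incomplete is the final paragraph: you assert that the pair $\bigl((x^{\underline{\eta}}/x_i)\,x^{\underline{\beta}},\,x^{\underline{\gamma}'}\ee_{\ell'}\bigr)$ is strictly smaller than $(x^{\underline{\eta}},x^{\underline{\gamma}}\ee_\ell)$ in ``the chosen well-founded order,'' but you never actually specify that order, and the sentence ``non-multiplicative multiplications strictly decrease the class, while the replacement \ldots\ strictly lowers the head-term component'' is too vague to be checked. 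The concrete fact that makes this work (recorded in this paper as Remark~\ref{rk:sliced pommaret cone}, from \cite[Lemma~3.4\emph{(vi)}]{ABRS}) is that every monomial $x^{\underline{\beta}}$ appearing in the expansion of $x_i\mf{\underline{\gamma}}{\ell}$ satisfies $x^{\underline{\beta}} <_{\textnormal{Lex}} x_i$; this gives a strict lex decrease of the multiplier $x^{\underline{\eta}} \mapsto (x^{\underline{\eta}}/x_i)\,x^{\underline{\beta}}$ within each fixed degree, and that is the well-founded order you want to induct on. If you make this explicit, your argument is complete.
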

 
The next result highlights the relation between a module $M\subset R_A^m (- \dd)$ generated by a $U$-marked basis and its truncation $M_{\geqslant s}$, which consists of all elements of $M$ of degree $\geqslant s$. For $m=1$, this connection was previously stated in \cite[Theorem 3.4]{LR2} assuming further that the quasi-stable module $U$ is a strongly stable ideal, and also in \cite[Proposition 6.7]{macaulay} in terms of marked schemes. 

\begin{proposition}\label{prop:basisTrunc}
Let $M\subset R_A^m (- \dd)$ be a module generated by a $U$-marked basis $F$. %corresponding to a quasi-stable module $U$.
For each $s$, $U_{\geqslant s}$ is quasi-stable, and $M_{\geqslant s}$ is generated by a $U_{\geqslant s}$-marked basis.
\end{proposition}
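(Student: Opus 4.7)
The plan is to construct an explicit Pommaret basis $\PP{U_{\geqslant s}}$ (showing that $U_{\geqslant s}$ is quasi-stable), to build a $U_{\geqslant s}$-marked set $F'$ from $F$ via the marked-basis decomposition of Definition~\ref{def:MarkedSet}, and finally to verify the direct-sum decomposition that characterizes $F'$ as a $U_{\geqslant s}$-marked basis of $M_{\geqslant s}$. All three steps will rely on the direct-sum property $(R_A^m(-\dd))_t = (F)_t \oplus \langle \mathcal N(U)_t\rangle$ supplied by $F$.

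First, I would partition each Pommaret cone $\mathcal C(x^{\underline{\alpha}}\ee_k)$ with $x^{\underline{\alpha}}\ee_k \in \PP{U}$ according to whether its degree crosses $s$. If $\deg(x^{\underline{\alpha}}\ee_k) \geqslant s$, the cone already lies in degree $\geqslant s$, and I keep $x^{\underline{\alpha}}\ee_k$. Otherwise, I replace $x^{\underline{\alpha}}\ee_k$ by the finite set
\[
\bigl\{x^{\underline{\delta}} x^{\underline{\alpha}}\ee_k\ :\ \vert\underline{\delta}\vert = s-\deg(x^{\underline{\alpha}}\ee_k),\ \delta_i = 0 \text{ for } x_i > \min(x^{\underline{\alpha}})\bigr\},
\]
whose Pommaret cones partition $\mathcal C(x^{\underline{\alpha}}\ee_k)_{\geqslant s}$. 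Existence of the partition follows by a greedy factorisation of each $x^{\underline{\eta}} x^{\underline{\alpha}}\ee_k$ in the truncation, absorbing the highest-index exponents of $\underline{\eta}$ into $\underline{\delta}$ first; uniqueness holds because any admissible pair $(\underline{\tau}, \underline{\delta})$ must satisfy $\tau_i = 0$ for $x_i > \min(x^{\underline{\delta}} x^{\underline{\alpha}})$, and this constraint forces the greedy choice. Disjointness across distinct $x^{\underline{\alpha}}\ee_k \in \PP{U}$ is inherited from $\PP{U}$. The resulting $\PP{U_{\geqslant s}}$ is then a finite Pommaret basis, so $U_{\geqslant s}$ is quasi-stable.

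Next, for each $x^{\underline{\beta}}\ee_k \in \PP{U_{\geqslant s}}$ with $t := \deg(x^{\underline{\beta}}\ee_k) \geqslant s$, the marked basis hypothesis for $F$ provides a unique decomposition $x^{\underline{\beta}}\ee_k = \mf{\underline{\beta}}{k}' + g_{\underline{\beta},k}$ with $\mf{\underline{\beta}}{k}' \in M_t$ and $g_{\underline{\beta},k} \in \langle \mathcal N(U)_t \rangle$. Since $\mathcal N(U)_t = \mathcal N(U_{\geqslant s})_t$ for $t \geqslant s$, the collection $F' := \{\mf{\underline{\beta}}{k}'\}_{x^{\underline{\beta}}\ee_k \in \PP{U_{\geqslant s}}}$ is a $U_{\geqslant s}$-marked set contained in $M_{\geqslant s}$.

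Finally, to show that $F'$ is a $U_{\geqslant s}$-marked basis of $M_{\geqslant s}$, I would verify $(R_A^m(-\dd))_t = (F')_t \oplus \langle \mathcal N(U_{\geqslant s})_t \rangle$ for every $t$. Sum equality is delivered by Proposition~\ref{prop:riscritture} applied to the $U_{\geqslant s}$-marked set $F'$. For $t < s$ the intersection is trivial because $(F')_t = 0$, while for $t \geqslant s$ any element of $(F')_t \cap \langle \mathcal N(U_{\geqslant s})_t \rangle$ lies in $M_t \cap \langle \mathcal N(U)_t \rangle = 0$, since $F$ is a marked basis of $M$. The same cancellation gives $M_t = (F')_t$ for $t \geqslant s$: decomposing $m \in M_t$ as $v + g$ via Proposition~\ref{prop:riscritture} one gets $g = m - v \in M_t \cap \langle \mathcal N(U)_t \rangle = 0$, whence $m = v \in (F')_t$ and thus $(F') = M_{\geqslant s}$. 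The main obstacle I anticipate is the combinatorial step in the first paragraph — establishing uniqueness of the greedy partition of each $\mathcal C(x^{\underline{\alpha}}\ee_k)_{\geqslant s}$ into Pommaret sub-cones; once this is secured, the rest of the argument follows cleanly from the direct sum supplied by $F$.
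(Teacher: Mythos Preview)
Your proposal is correct and follows essentially the same architecture as the paper: the same explicit Pommaret basis for $U_{\geqslant s}$ (slicing low-degree Pommaret cones at degree $s$), and the same candidate $U_{\geqslant s}$-marked set built by taking $F$-normal forms of the terms in $\PP{U_{\geqslant s}}$.

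The only noteworthy difference is the final verification. The paper checks that $\widetilde{F}$ is a $U_{\geqslant s}$-marked basis via the reduction criterion of Theorem~\ref{th:caratterizzazione}, after observing that $\widetilde{F}_{>s}=F_{>s}$ and that $\widetilde{F}_s$ spans the same $A$-module as a natural generating set $G_s$ of $M_s$. You instead verify the direct-sum definition head-on: Proposition~\ref{prop:riscritture} gives the sum $(R_A^m(-\dd))_t=(F')_t+\langle\mathcal N(U_{\geqslant s})_t\rangle$, and both trivial intersection and $(F')_t=M_t$ follow from the single observation that $(F')_t\subset M_t$ combined with $M_t\cap\langle\mathcal N(U)_t\rangle=0$. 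Your route is marginally more self-contained (it uses only Proposition~\ref{prop:riscritture} and the definition of a marked basis, not Theorem~\ref{th:caratterizzazione}); the paper's route makes the connection to the reduction machinery more explicit. The combinatorial ``obstacle'' you flag is routine: once you write $x^{\underline{\eta}}x^{\underline{\alpha}}$ with $x^{\underline{\eta}}$ using only multiplicative variables of $x^{\underline{\alpha}}$, the degree-$s$ truncation point in the greedy absorption is forced by the constraint $\max(x^{\underline{\tau}})\leqslant\min(x^{\underline{\delta}}x^{\underline{\alpha}})$.
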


%%%%%%%%%%%%%%%%%%%%%%%%%%%%%% NON CANCELLARE
% Dimostrazione più sintetica
%\begin{proof} \begin{color}{red}Sintetica\end{color}
%For every $x^{\underline{\alpha}} \mee{k}{} \in \PP{U_{\geqslant s}}$, by the reduction $\ridast{F}$ of Definition \ref{def:riscrittura} and thanks to Proposition~\ref{prop:riscritture} it is possible to compute the unique homogeneous polynomial $\tilde f_{\underline{\alpha}, k}:=x^{\underline{\alpha}} \mee{k}{}-h_{\underline{\alpha}}\in M$ such that $\supp(h_{\underline{\alpha}})\subseteq \mathcal N(U)$. Observe that if $\deg(x^{\underline{\alpha}} \mee{k}{})\geqslant s+1$, then $\tilde f_{\underline{\alpha}, k}$ is one of the elements of the $U$-marked basis $F$.

%Since $U$ is a quasi-stable module, it is immediate that $U_{\geqslant s}$ is a quasi-stable module too. We can conclude that  $\tilde F:=\{\tilde f_{\underline{\alpha}, k} \vert x^\alpha \mee{k}{} \in  \PP{U_{\geqslant s}}\}$ is a $U_{\geqslant s}$-marked basis of $(F)_{\geqslant s}$ thanks to Theorem~\ref{th:caratterizzazione}.
%\end{proof}
%%%%%%%%%%%%%%%%%%%%%%%%%%%%%%%%%

%%%%%%%%%%%%%%%%%%%%%%%%%%%%%%%%%%% NON CANCELLARE
% Dimostrazione più dettagliata
\begin{proof} %\begin{color}{red}Dettagliata con conclusione da precisare\end{color}
If $s$ is less than or equal to the minimal degree of an element in $U$ (or $M$), then $U_{\geqslant s} = U$ and $M_{\geqslant s} = M$. If $s > \min \{\deg (x^{\underline{\alpha}}\, \ee_k)\ \vert\ x^{\underline{\alpha}}\,\ee_k \in \mathcal{P}_U\}$, then
\[
\mathcal{C}(x^{\underline{\alpha}}\ee_k)_{\geqslant s} = \bigsqcup_{x^{\underline {\gamma}}\, \ee_k \in \mathcal{C}(x^{\underline{\alpha}}\, \ee_k)_s} \mathcal C(x^{\underline {\gamma}}\, \ee_k)
\]
Therefore, the monomial module $U_{\geqslant s}$ is quasi-stable with Pommaret basis 
\[
\PP{U_{\geqslant s}} = \bigcup_{x^{\underline{\alpha}}\ee_k \in \PP{U}}  \mathcal{C}(x^{\underline{\alpha}}\, \ee_k)_s.
\]
By Proposition \ref{prop:riscritture}, the truncated module $M_{\geqslant s}$ is generated by
\[
G := F_{\geqslant s} \cup\left(\bigcup_{\begin{subarray}{c}x^{\underline{\alpha}}\, \ee_k \in \PP{U}\\ \deg(x^{\underline{\alpha}}\, \ee_k) < s\end{subarray}} \left\{ x^{\underline{\delta}} f_{\underline{\alpha},k}\ \middle\vert\ x^{\underline{\delta}}x^{\underline{\alpha}}\ee_k \in \mathcal{C}(x^{\underline{\alpha}}\ee_k)_s \right\}\right).
\]
Moreover, define the set of elements
\[
\widetilde{F} := \left\{ \tilde{f}_{\underline{\gamma},k} = \HTemph{x^{\underline{\gamma}}\, \ee_k} - g_{\underline{\gamma},k}\ \middle\vert\ x^{\underline{\gamma}}\, \ee_k \in \mathcal{P}_{U_{\geqslant s}}  \right\}
\]
where $g_{\underline{\gamma},k}$ is the remainder in \eqref{eq:every unique writing} obtained from reducing the term $x^{\underline{\gamma}}\, \ee_k$ with respect to the marked basis $F$.

The set $\widetilde{F}$ is a $U_{\geqslant s}$-marked set; the subsets $F_{> s}$ and $\widetilde{F}_{> s}$ coincide and the homogeneous pieces $G_s$ and $\widetilde{F}_s$ of degree $s$ generate the same $A$-module. Therefore, by Theorem~\ref{th:caratterizzazione}$,\widetilde{F}$ is a $U_{\geqslant s}$-marked basis.
\end{proof}
%%%%%%%%%%%%%%%%%%%%

Theorem \ref{th:caratterizzazione} and the expression \eqref{eq:unique writing} lead to specific syzygies of a marked basis, which we now focus on.

Given a $U$-marked set $F = \{\mf{\underline{\alpha}_j}{k_j}\}_{j=1,\dots,p}\subset R_A^m(-\dd)$, set $d'_j := \deg(\mf{\underline{\alpha}_j}{k_j}) = \vert \underline{\alpha}_j \vert + d_{k_j}$ and $\dd' = (d'_1,\ldots,d'_p)$. Moreover, consider $R_A^p(-\dd') = \bigoplus_{j=1}^p R_A(-d_j')\ff_{\underline{\alpha}_j,k_j}$ with standard basis $\{\ff_{\underline{\alpha}_j,k_j}\}_{j=1,\dots,p}$, and define the module homomorphism $\partial_0: R_A^p(-\dd') \to R_A^m(-\dd)$ by 
 \[
\ff_{\underline{\alpha}_j,k_j} \to \mf{\underline{\alpha}_j}{k_j},\qquad j=1,\ldots,p.
 \]
Since $(F) = \textnormal{im}(\partial_0)$, we denote by $\syz(F) = \ker(\partial_0)\subset R_A^p(-\dd')$ the $R_A$-module of first syzygies of $F$.

\begin{definition}\label{def:fundamental} \cite[Section 6]{ABRS} 
Given an $U$-marked basis $F$, a syzygy of $F$  of the form
\begin{equation}\label{eq:fundamental-syzygy}
\msyz{i,\underline{\alpha},k}{}= \HTemph{x_i\ff_{\underline{\alpha},k}} - \sum_{x^{\underline{\gamma}}\, \mee{\ell}{} \in \PP{U}} P_{\underline{\gamma},\ell}\ff_{\underline{\gamma},\ell},
\end{equation}
is called a {\em fundamental syzygy of $F$} if  it is obtained via the unique writing of $x_i \mf{\underline{\alpha}}{k}$ as in  \eqref{eq:unique writing}, where $\mf{\underline{\alpha}}{k}\in F$ and  $x_i$ is a non-multiplicative variable of $x^{\underline{\alpha}}$. We denote by 
\[
F^{\partial_1}:=\left\{\msyz{i,\underline{\alpha},k}{}\ \middle\vert\ \mf{\underline{\alpha}}{k} \in F,\ x_i > \min(x^{\underline{\alpha}})\right\}\subseteq \syz(F)
\]
the set of all fundamental syzygies of $F$. 
\end{definition} 

\begin{remark}\label{rk:sliced pommaret cone}
According to \cite[Lemma 3.4\emph{(vi)}]{ABRS}, the reduction $\xrightarrow{F^\ast}$ of elements $x_i\mf{\underline{\alpha}}{k}$ involves terms $x^{\underline{\eta}} \mf{\underline{\gamma}}{\ell}$ with $x^{\underline{\eta}} <_{\textnormal{Lex}} x_i$, i.e., $x^{\underline{\eta}} \in A[x_0,\ldots,x_{i-1}]$.  We call {\em $i$-sliced Pommaret cone} of a term $x^{\underline{\alpha}}\,\ee_k$ the following subset of the Pommaret cone $\mathcal{C}(x^{\underline{\alpha}}\ee_k)$
\begin{equation}
\mathcal{C}_i(x^{\underline{\alpha}}\,\ee_k)   := \left\{x^{\underline{\delta}} x^{\underline {\alpha}}\, \ee_k\ \middle\vert\ \max(x^{\underline{\delta}})\leqslant  \min\{x_{i-1},\min(x^{\underline {\alpha}})\} \right\}.
\end{equation}

Hence, the support of $P_{\underline{\gamma},\ell}$ in \eqref{eq:fundamental-syzygy} is contained in the $i$-sliced Pommaret cone $\mathcal{C}_i(x^{\underline{\gamma}})$, and the support of the fundamental syzygy $\msyz{i,\underline{\alpha},k}{}$ is contained in
\begin{equation}\label{supporto1}
\{x_i\,\ff_{\underline{\alpha},k}\} \cup  \bigcup_{x^{\underline{\gamma}}\,\mathbf{e}_\ell \in \mathcal{P}_U} \left\{x^{\underline{\eta}}\,\ff_{\underline{\gamma},\ell}\ \middle\vert\ x^{\underline{\eta}} \in \mathcal{C}_i(x^{\underline{\gamma}})_{d'_{k} +1 - d'_{\ell}}\right\}.
\end{equation}
\end{remark}

\begin{remark}\label{rk:syzygy modules stable}
It is noteworthy that  $F^{\partial_1}$ naturally forms a $U'$-marked set, where $U' \subset R_A^m(-\dd')$ is the \emph{stable} submodule with Pommaret basis 
\begin{equation}\label{eq:pommaretBasisU'}
\mathcal P_{U'}=\left\{x_i\ff_{\underline{\alpha},k}\ \middle\vert\ \ff_{\underline{\alpha},k} \in R_A^m(-\dd'),\ x_i > \min(x^{\underline{\alpha}})\right\}
\end{equation}
In particular, $U'$ is generated linearly
\[
U' = \bigoplus_{x^{\underline{\alpha}}\ee_k} \big(x_i\ \vert\ x_i > \min(x^{\underline{\alpha}})\big)\ff_{\underline{\alpha},k}
\]
\end{remark}

\begin{theorem}\label{th:fundamental} \cite[Lemma 6.3 and Theorem 6.5]{ABRS}
Let $F$ be a $U$-marked basis, and let $U'$ be the  stable module in $R_A^p(-\dd')$ with Pommaret basis $\PP{U'}$ as described in \eqref{eq:pommaretBasisU'}. Then, $F^{\partial_1}$ is the $U'$-marked basis of the module $Syz(F)$ of the first syzygies of $F$.
\end{theorem}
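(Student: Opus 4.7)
The plan is to decompose the statement into three pieces: (i) $F^{\partial_1}$ is contained in $\syz(F)$ and forms a $U'$-marked set; (ii) $F^{\partial_1}$ is a $U'$-marked basis of the submodule it generates; and (iii) the submodule generated by $F^{\partial_1}$ exhausts $\syz(F)$.

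For (i), the inclusion $F^{\partial_1}\subseteq \syz(F)$ is built into the construction: applying $\partial_0$ to the right-hand side of \eqref{eq:fundamental-syzygy} returns $x_i\mf{\underline{\alpha}}{k}-\sum P_{\underline{\gamma},\ell}\mf{\underline{\gamma}}{\ell}=0$ by the unique writing \eqref{eq:unique writing}. The head terms of elements of $F^{\partial_1}$ are distinct by construction and coincide with $\PP{U'}$ in \eqref{eq:pommaretBasisU'}. Using Remark \ref{rk:sliced pommaret cone}, the non-head part $\sum P_{\underline{\gamma},\ell}\ff_{\underline{\gamma},\ell}$ is supported on terms $x^{\underline{\eta}}\ff_{\underline{\gamma},\ell}$ with $\max(x^{\underline{\eta}})\leqslant \min(x^{\underline{\gamma}})$, which is exactly the defining condition for $\mathcal N(U')$ (since $U'$ is spanned by multiplication by strictly larger variables than $\min(x^{\underline{\gamma}})$). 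This settles (i).

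For (ii), I would invoke Theorem \ref{th:caratterizzazione}: it suffices to show, for every $\msyz{i,\underline{\alpha},k}{}\in F^{\partial_1}$ and every non-multiplicative variable $x_j$ of its head term $x_i\ff_{\underline{\alpha},k}$ (equivalently $x_j>\min(x^{\underline{\alpha}})$), that $x_j\,\msyz{i,\underline{\alpha},k}{}\cridast{F^{\partial_1}}0$. The driving idea is commutativity of the two non-multiplicative multiplications: the element $x_i x_j\mf{\underline{\alpha}}{k}=x_jx_i\mf{\underline{\alpha}}{k}$ admits two natural reductions to its unique $F$-normal form $0$. Reducing first by $x_j$ and then by $x_i$, and vice versa, produces an identity in $R_A^p(-\dd')$ of the shape
\[
x_j\msyz{i,\underline{\alpha},k}{}-x_i\msyz{j,\underline{\alpha},k}{}=\sum_{x^{\underline{\gamma}}\ee_\ell\in\PP{U}}\bigl(x_j P_{\underline{\gamma},\ell}^{(i)}-x_iP_{\underline{\gamma},\ell}^{(j)}\bigr)\ff_{\underline{\gamma},\ell},
\]
where the $P^{(\cdot)}$ are the coefficients from the corresponding fundamental syzygies. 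Each term on the right can then be rewritten, using again \eqref{eq:fundamental-syzygy} and Proposition \ref{prop:riscritture} applied to the auxiliary $U'$-marked set, as a combination of elements of the Pommaret cones of $F^{\partial_1}$ with coefficients supported on multiplicative variables; in particular the reduction is Noetherian and confluent, so the process terminates with normal form zero. This verification is the main obstacle, as it requires tracking the reduction paths carefully and invoking the $i$-sliced Pommaret cone description of Remark \ref{rk:sliced pommaret cone} to keep the supports compatible with multiplicative variables at each step.

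For (iii), let $S\in\syz(F)$ be homogeneous of degree $t$. Since $F^{\partial_1}$ is a $U'$-marked set by (i), Proposition \ref{prop:riscritture} yields the unique decomposition
\[
S=\sum_{x_i\ff_{\underline{\alpha},k}\in\PP{U'}} Q_{i,\underline{\alpha},k}\,\msyz{i,\underline{\alpha},k}{}+R,\qquad R\in\langle\mathcal N(U')_t\rangle.
\]
Because $S$ and every $\msyz{i,\underline{\alpha},k}{}$ lie in $\syz(F)$, we have $\partial_0(R)=0$. Writing $R=\sum_{(\underline{\gamma},\ell)} R_{\underline{\gamma},\ell}\ff_{\underline{\gamma},\ell}$, the condition $R\in\langle\mathcal N(U')\rangle$ means $\supp(R_{\underline{\gamma},\ell})\cdot x^{\underline{\gamma}}\ee_\ell\subseteq\mathcal{C}(x^{\underline{\gamma}}\ee_\ell)$. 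Hence $\partial_0(R)=\sum R_{\underline{\gamma},\ell}\,\mf{\underline{\gamma}}{\ell}=0$ is precisely an expression of $0$ of the form in Proposition \ref{prop:riscritture} applied to $f=0$. By the uniqueness statement there, all $R_{\underline{\gamma},\ell}$ vanish, so $R=0$ and $S\in(F^{\partial_1})$. Combining (i)--(iii), $F^{\partial_1}$ is a $U'$-marked basis of $\syz(F)$.
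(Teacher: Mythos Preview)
The paper does not supply its own proof of this theorem; it is stated with a bare citation to \cite[Lemma~6.3 and Theorem~6.5]{ABRS}, so there is no argument in the paper to compare against.

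That said, your outline is essentially correct, and in fact cleaner than you seem to realize. Parts (i) and (iii) are complete and correct. Part (ii), which you flag as ``the main obstacle'' and leave unfinished, is \emph{redundant}: the argument you give in (iii) already implies that $F^{\partial_1}$ is a $U'$-marked basis. Indeed, by Proposition~\ref{prop:riscritture} applied to the $U'$-marked set $F^{\partial_1}$, the sum $(F^{\partial_1})_s + \langle\mathcal{N}(U')_s\rangle$ equals $(R_A^p(-\dd'))_s$ for every $s$; for directness, take $h \in (F^{\partial_1})_s \cap \langle\mathcal{N}(U')_s\rangle$, note that $h\in\syz(F)$ since $(F^{\partial_1})\subseteq\syz(F)$ by (i), and run your (iii) argument with $S=h$: the unique decomposition of $h$ must have all $Q$'s zero and $R=h$, and then $\partial_0(R)=0$ forces $R=0$ by the uniqueness in \eqref{eq:unique writing}. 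So you may delete the hand-wavy (ii) entirely, or replace it by this one-line observation, and the proof stands. The key insight you isolated---the identification of $\mathcal{N}(U')$ with terms $x^{\underline{\eta}}\ff_{\underline{\gamma},\ell}$ satisfying $x^{\underline{\eta}}x^{\underline{\gamma}}\ee_\ell\in\mathcal{C}(x^{\underline{\gamma}}\ee_\ell)$, which transfers the uniqueness of \eqref{eq:unique writing} from the $U$-level to the $U'$-level---is exactly what drives the argument.
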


Thanks to Theorem \ref{th:fundamental}, the following refinement of formula \eqref{supporto1} is straightforward and provides a prediction of the shape of the marked basis of the first syzygy module.

\begin{corollary}\label{cor:suppSyz}
With the notation of Theorem \ref{th:fundamental}, the support of  the fundamental syzygy $\msyz{i,\underline{\alpha},k}{}$ is contained in the following set:
\begin{equation}\label{supporto2}
\{x_i\,\ff_{\underline{\alpha},k}\} \cup \left( \bigcup_{x^{\underline{\gamma}}\,\mathbf{e}_\ell \in \mathcal{P}_U} \left\{x^{\underline{\eta}}\,\ff_{\underline{\gamma},\ell}\ \middle\vert\ x^{\underline{\eta}} \in \mathcal{C}_i(x^{\underline{\gamma}})_{d'_{k} +1 - d'_{\ell}}\right\} \cap \mathcal N(U')\right).
\end{equation}
\end{corollary}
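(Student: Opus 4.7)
The plan is to combine two facts that are already available in the excerpt: the support bound coming from Remark~\ref{rk:sliced pommaret cone}, and the marked-basis structure of $F^{\partial_1}$ given by Theorem~\ref{th:fundamental}.

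First, I would recall the expression \eqref{eq:fundamental-syzygy} of the fundamental syzygy, whose head term is $x_i\ff_{\underline{\alpha},k}$ and whose tail is
\[
\sum_{x^{\underline{\gamma}}\mee{\ell}{}\in\PP{U}} P_{\underline{\gamma},\ell}\,\ff_{\underline{\gamma},\ell}.
\]
By Remark~\ref{rk:sliced pommaret cone}, the support of each $P_{\underline{\gamma},\ell}\,\ff_{\underline{\gamma},\ell}$ is contained in $\{x^{\underline{\eta}}\ff_{\underline{\gamma},\ell} \mid x^{\underline{\eta}}\in\mathcal{C}_i(x^{\underline{\gamma}})_{d'_k+1-d'_\ell}\}$, so the support of $\msyz{i,\underline{\alpha},k}{}$ is contained in the set \eqref{supporto1}. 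This reproduces the bound I start from.

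Next, I would invoke Theorem~\ref{th:fundamental}: $F^{\partial_1}$ is a $U'$-marked basis for $\syz(F)$, where $U'$ has Pommaret basis \eqref{eq:pommaretBasisU'}. By the very definition of $U'$-marked set (Definition~\ref{def:MarkedSet}), the head term of $\msyz{i,\underline{\alpha},k}{}$ is exactly $x_i\ff_{\underline{\alpha},k}\in\PP{U'}$, and
\[
\supp\bigl(\msyz{i,\underline{\alpha},k}{}-x_i\ff_{\underline{\alpha},k}\bigr)\subseteq\mathcal{N}(U').
\]
In other words, every term occurring in the tail of $\msyz{i,\underline{\alpha},k}{}$ lies in $\mathcal{N}(U')$.

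Combining the two observations, any non-head term in $\supp(\msyz{i,\underline{\alpha},k}{})$ must simultaneously belong to the second set of \eqref{supporto1} and to $\mathcal{N}(U')$, which is precisely the intersection appearing in \eqref{supporto2}. This yields the refined containment and completes the proof. There is no real obstacle: the corollary is an immediate bookkeeping consequence of Remark~\ref{rk:sliced pommaret cone} together with Theorem~\ref{th:fundamental}; the only thing worth stating explicitly is that the condition ``tail in $\mathcal{N}(U')$'' is already built into the fact that $F^{\partial_1}$ is a $U'$-marked set, so no separate reduction argument is needed.
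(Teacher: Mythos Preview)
Your argument is correct and is exactly the approach the paper intends: the corollary is stated there as a ``straightforward'' refinement of formula~\eqref{supporto1} via Theorem~\ref{th:fundamental}, and the paper gives no further proof. Your explicit unpacking---the tail support bound from Remark~\ref{rk:sliced pommaret cone} intersected with the $\mathcal{N}(U')$ constraint forced by the $U'$-marked set condition---is precisely what is meant.
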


We can further refine Corollary \ref{cor:suppSyz} by predicting which terms will not appear in the support of a fundamental syzygy: the goal is to identify terms $x^{\underline \eta}$ in $ \mathcal{C}_i(x^{\underline{\gamma}})$ that never satisfy $x^{\underline \eta}x^{\underline \gamma}\ee_\ell=x_i x^{\underline \epsilon}\ee_\ell$ for any $x^{\underline \epsilon}\ee_\ell\in \mathcal N(U)$.

\begin{corollary}\label{cor:forecastNoTerms}
With the notation of Theorem \ref{th:fundamental} and Corollary \ref{cor:suppSyz}, if $x^{\underline \eta}\in  \mathcal{C}_i(x^{\underline{\gamma}})_{d'_{k} +1 - d'_{\ell}}$ with $x^{\underline{\eta}}\,\ff_{\underline{\gamma},\ell}\in \mathcal N(U')$ and for every $x^{\underline \epsilon} \ee_\ell\in \mathcal N(U)$ we have that $x_ix^{\underline \epsilon} \ee_\ell\in U$ but $x_ix^{\underline \epsilon}\ee_\ell\neq x^{\underline \eta}x^{\underline \gamma}\ee_\ell$, then $x^{\underline \eta} \ff_{\underline{\gamma},\ell}$ does not appear in the support of $\msyz{i,\underline{\alpha},k}{}$.
\end{corollary}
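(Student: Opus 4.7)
The plan is to compute $[x^{\underline{\eta}}]_{P_{\underline{\gamma},\ell}}$, the coefficient of $x^{\underline{\eta}}$ in the polynomial $P_{\underline{\gamma},\ell}$ from the unique writing~\eqref{eq:unique writing} of $x_i\,\mf{\underline{\alpha}}{k}$. By Definition~\ref{def:fundamental}, the coefficient of $x^{\underline{\eta}}\,\ff_{\underline{\gamma},\ell}$ in $\msyz{i,\underline{\alpha},k}{}$ equals $-[x^{\underline{\eta}}]_{P_{\underline{\gamma},\ell}}$, so the desired conclusion amounts to showing that this scalar vanishes. The approach is to equate the coefficients of the single monomial $x^{\underline{\eta}} x^{\underline{\gamma}}\,\ee_\ell$ on both sides of the identity
\[
x_i\,\mf{\underline{\alpha}}{k} \;=\; \sum_{x^{\underline{\gamma}'}\ee_{\ell'}\in\mathcal{P}_U} P_{\underline{\gamma}',\ell'}\, \mf{\underline{\gamma}'}{\ell'}.
\]

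On the left-hand side, the monomial $x^{\underline{\eta}} x^{\underline{\gamma}}\,\ee_\ell$ can only arise from some $x^{\underline{\beta}}\ee_\ell \in \supp(\mf{\underline{\alpha}}{k})$ with $x_i\,x^{\underline{\beta}} = x^{\underline{\eta}} x^{\underline{\gamma}}$. Such a support element is either the head term $x^{\underline{\alpha}}\ee_k$ (possible only if $k=\ell$) or belongs to $\mathcal N(U)$. The $\mathcal N(U)$-case is excluded directly by the hypothesis, and the head-term case is incompatible with $x^{\underline{\eta}}\,\ff_{\underline{\gamma},\ell}\in \mathcal N(U')$ combined with the structure of $\mathcal{P}_{U'}$ described in Theorem~\ref{th:fundamental} and Remark~\ref{rk:syzygy modules stable}. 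Hence the LHS coefficient at $x^{\underline{\eta}} x^{\underline{\gamma}}\,\ee_\ell$ is~$0$.

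On the right-hand side, I would split each summand as $P_{\underline{\gamma}',\ell'}\,\mf{\underline{\gamma}'}{\ell'} = P_{\underline{\gamma}',\ell'}\,x^{\underline{\gamma}'}\ee_{\ell'} + P_{\underline{\gamma}',\ell'}\,(\mf{\underline{\gamma}'}{\ell'} - x^{\underline{\gamma}'}\ee_{\ell'})$. By Proposition~\ref{prop:riscritture} the head-term part has support inside $\mathcal{C}(x^{\underline{\gamma}'})\,\ee_{\ell'}$; since $x^{\underline{\eta}} \in \mathcal{C}_i(x^{\underline{\gamma}}) \subseteq \mathcal{C}(x^{\underline{\gamma}})$ forces $x^{\underline{\eta}} x^{\underline{\gamma}}\,\ee_\ell\in\mathcal{C}(x^{\underline{\gamma}}\ee_\ell)$, and since the Pommaret cones are pairwise disjoint, only the summand with $(\underline{\gamma}',\ell')=(\underline{\gamma},\ell)$ contributes a head-term term at $x^{\underline{\eta}} x^{\underline{\gamma}}\,\ee_\ell$, and that contribution is exactly $[x^{\underline{\eta}}]_{P_{\underline{\gamma},\ell}}$.

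The main obstacle is controlling the non-head contributions $P_{\underline{\gamma}',\ell'}\,(\mf{\underline{\gamma}'}{\ell'} - x^{\underline{\gamma}'}\ee_{\ell'})$. Such a contribution to $x^{\underline{\eta}} x^{\underline{\gamma}}\,\ee_\ell$ would arise from a factorization $x^{\underline{\tau}}\,x^{\underline{\beta}} = x^{\underline{\eta}} x^{\underline{\gamma}}$ with $x^{\underline{\tau}}\in\supp(P_{\underline{\gamma}',\ell'})$ and $x^{\underline{\beta}}\ee_\ell \in \mathcal N(U)\cap\supp(\mf{\underline{\gamma}'}{\ell'})$. I would exclude these by invoking Remark~\ref{rk:sliced pommaret cone}, which restricts $\supp(P_{\underline{\gamma}',\ell'})$ to $A[x_0,\ldots,x_{i-1}]$ (so in particular $x_i\nmid x^{\underline{\tau}}$), and then propagating the hypothesis through an induction on the Pommaret structure of $\mathcal{P}_U$: each coefficient $[x^{\underline{\tau}}]_{P_{\underline{\gamma}',\ell'}}$ entering such a factorization is governed by a parallel instance of the hypothesis, which reduces eventually to the base case already handled above. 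Putting these together, equating coefficients yields $0 = [x^{\underline{\eta}}]_{P_{\underline{\gamma},\ell}} + 0$, giving the desired vanishing.
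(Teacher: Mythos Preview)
The paper states this corollary without proof, so there is no argument of the authors to compare against. Your coefficient-comparison strategy is the natural one, but two of your steps do not go through. First, the ``head-term case'' on the left-hand side is \emph{not} excluded by $x^{\underline{\eta}}\ff_{\underline{\gamma},\ell}\in\mathcal N(U')$. That hypothesis only says $\max(x^{\underline{\eta}})\leq\min(x^{\underline{\gamma}})$; it rules out $x^{\underline{\eta}}\ff_{\underline{\gamma},\ell}=x_i\ff_{\underline{\alpha},k}$ (the syzygy's own head term in $R_A^p(-\dd')$), but not the equality $x^{\underline{\eta}}x^{\underline{\gamma}}\ee_\ell=x_ix^{\underline{\alpha}}\ee_k$ in $R_A^m(-\dd)$ with $(\underline{\gamma},\ell)\neq(\underline{\alpha},k)$. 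Concretely, take $J=(x_2,x_1^2)\subset\kk[x_0,x_1,x_2]$ with any $J$-marked basis $\{f_{x_2},f_{x_1^2}\}$, and in $\msyz{x_2,x_1^2}{}$ put $x^{\underline{\gamma}}=x_2$, $x^{\underline{\eta}}=x_1^2$. All hypotheses of the corollary hold (since $x_2\in J$ one has $x_2\cdot\mathcal N(J)\subset J$, and $x_1^2\notin\mathcal N(J)$), $x_1^2\ff_{x_2}\in\mathcal N(U')$, yet one computes $P_{x_2}=f_{x_1^2}$, so $x_1^2\ff_{x_2}$ appears in $\msyz{x_2,x_1^2}{}$ with coefficient $-1$.

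Second, the induction you sketch for the ``non-head contributions'' is not an argument: the hypothesis constrains factorizations $x^{\underline{\eta}}x^{\underline{\gamma}}=x_i\cdot x^{\underline{\epsilon}}$ with $x^{\underline{\epsilon}}\ee_\ell\in\mathcal N(U)$, whereas a tail contribution comes from a factorization $x^{\underline{\eta}}x^{\underline{\gamma}}=x^{\underline{\tau}}\cdot x^{\underline{\beta}}$ with $x^{\underline{\tau}}\in\supp(P_{\underline{\gamma}',\ell'})$ and $x^{\underline{\beta}}\ee_\ell\in\mathcal N(U)$. There is no ``parallel instance of the hypothesis'' controlling $[x^{\underline{\tau}}]_{P_{\underline{\gamma}',\ell'}}$, since the hypothesis says nothing about the monomial $x^{\underline{\tau}}x^{\underline{\gamma}'}$. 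In the same example, take instead $x^{\underline{\gamma}}=x_1^2$, $x^{\underline{\eta}}=x_1$: again all hypotheses hold (now $x_2\nmid x_1^3$ makes the inequality automatic), but $P_{x_1^2}=-(d_1x_1+d_2x_0)$ where $f_{x_2}=x_2+d_1x_1+d_2x_0$, so $x_1\ff_{x_1^2}$ appears in $\msyz{x_2,x_1^2}{}$ with coefficient $d_1$, coming precisely from the tail contribution $x^{\underline{\tau}}=x_1^2\in\supp(P_{x_2})$, $x^{\underline{\beta}}=x_1\in\supp(f_{x_2}-x_2)$. These examples indicate that your two gaps cannot be repaired without strengthening the hypotheses of the statement (for instance by also excluding the Pommaret cofactor of $x_ix^{\underline{\alpha}}$ and by controlling tail factorizations, not only those through $x_i$).
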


\begin{example}\label{ex:section2-part2}
Consider the ideal $J = (x_2^2,x_1^2) \subset R = \kk[x_0,x_1,x_2]$ introduced in Example \ref{ex:section2-part1} and the $J$-marked set
\[
F = \left\{ f_{x_2^2} = \HTemph{x_2^2} - x_2x_1 - x_2x_0 + x_1 x_0,\  f_{x_1^2} = \HTemph{x_1^2} + a\, x_2x_1,\ f_{x_2x_1^2} = \HTemph{x_2x_1^2} - 2x_2x_1x_0 + 2x_1x_0^2 \right\},
\] 
depending on a parameter $a\in\mathbb{K}$. $F$ is a $J$-marked basis if, and only if, $a = -1$. In fact,
\[
\begin{split}
x_2 f_{x_1^2}=  {}&ax_1\, f_{x_2^2} - ax_0\, f_{x_1^2} + (a+1)\, f_{x_2 x_1^2} + {}\\
&(a^2+4a+3) x_2x_1x_0 - (a+1)x_1x_0^2\\
x_2 f_{x_2x_1^2} = {}& (x_1^2 - 2x_1x_0)\, f_{x_2^2} - x_1x_0\, f_{x_1^2} + \big(x_1 + (a+1)x_0\big)\, f_{x_2 x_1^2} + {}\\
& 2(a+1)x_2x_1x_0 - 2(a+1)x_1x_0^3
\end{split}
\]
Set $a=-1$ and given the homomorphism $\partial_0: R(-2)\ff_{x_2^2} \oplus R(-2)\ff_{x_1^2} \oplus R(-3)\ff_{x_2x_1^2} \to R$ defined by
\[
%\begin{split}
\ff_{x_2^2} \mapsto f_{x_2^2},\qquad\ff_{x_1^2} \mapsto f_{x_1^2},\qquad \ff_{x_2x_1^2} \mapsto  f_{x_2x_1^2},
%\end{split}
\]
the stable module $U'$ is generated by the Pommaret basis $\mathcal{P}_{U'} = \{x_2\ff_{x_1^2} ,x_2\ff_{x_2x_1^2}\}$ and the syzygy module $\textnormal{Syz}(F) = \ker \partial_0$ is generated by the $U'$-marked basis
\[
F^{\partial_1} = \left\{\HTemph{x_2\ff_{x_1^2}} + x_1\ff_{x_2^2} - x_0\ff_{x_1^2},\  \HTemph{x_2\ff_{x_2x_1^2}} + (2x_1x_0-x_1^2)\ff_{x_2^2} + x_1x_0\ff_{x_1^2} - x_1 \ff_{x_2x_1^2}\right\}.
\]
\end{example}

By repeatedly applying Theorem \ref{th:fundamental}, one can construct a free resolution of a graded module $M$ generated by a $U$-marked basis for some quasi-stable module $U$. Consequently, the direct summands in such a resolution depend solely on the Pommaret basis $\PP{U}$.

\begin{theorem}\label{thm:freeRes}
\cite[Theorem 6.6]{ABRS}
Let $A$ be a Noetherian $\kk$-algebra, $U$ a quasi-stable module in $R_A^m (- \dd)$, and $M = (F)$ a module generated by a $U$-marked basis $ F\subset R_A^m (- \dd)$.

Denote by $g_{h,q}(U)$ the number of terms $x^{\underline {\alpha}}\, \ee_k \in
\PP{U}$ with $\deg(x^{\underline {\alpha}}\,\ee_k)=h$ and $\min (x^{\underline {\alpha}}) = x_q$, and
set $D=\min\{i\ \vert\ x_i=\min(x^{\underline {\alpha}}),\ x^{\underline {\alpha}}\, \ee_k \in \PP{U}\}$. Then
$M$ admits a finite free resolution
\begin{equation}\label{eq:freeres}
0\xrightarrow{}\bigoplus R_A(-j)^{r_{n-D,j}}\xrightarrow{\partial_{n-D}}\cdots
\xrightarrow{}\bigoplus R_A(-j)^{r_{1,j}}
\xrightarrow{\partial_1}\bigoplus R_A(-j)^{r_{0,j}}\xrightarrow{\partial_0} M \xrightarrow{} 0
\end{equation}
of length $n - D$, where the ranks of the free modules are determined by
\begin{equation}\label{eq:rksBetti}
r_{i,j} := r_{i,j}(U) = \sum_{q = 1}^{n-i} \binom{n-q}{i}g_{j-i,q}(U).
\end{equation}
\end{theorem}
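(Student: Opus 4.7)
The plan is to iterate Theorem~\ref{th:fundamental}, tracking at each stage the Pommaret basis of the syzygy module that appears. Since Theorem~\ref{th:fundamental} produces a marked basis over a \emph{stable} module (hence quasi-stable), the hypotheses of the theorem itself are preserved at the next step, so the iteration is legal. The ranks in \eqref{eq:rksBetti} and the length $n - D$ will then drop out by a combinatorial count on the Pommaret bases produced.

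First I would carry out the inductive description. Set $M_0 = M$, $\partial_0$ as in the statement, and $F^{(0)} = F$. Having constructed a marked basis $F^{(i)}$ of $\ker\partial_{i-1}$ whose Pommaret basis consists of terms of the form
\[
x_{j_1}x_{j_2}\cdots x_{j_i}\,\ff^{(0)}_{\underline{\alpha},k},\qquad n \geqslant j_1 > j_2 > \cdots > j_i > q,
\]
where $x^{\underline{\alpha}}\ee_k \in \PP{U}$ and $\min(x^{\underline{\alpha}}) = x_q$, I would apply Theorem~\ref{th:fundamental} to $F^{(i)}$. The crucial observation is that the min variable of the term $x_{j_1}\cdots x_{j_i}\,\ff^{(0)}_{\underline{\alpha},k}$ (in the ambient free module) is $x_{j_i}$; hence, by \eqref{eq:pommaretBasisU'}, its non-multiplicative variables are exactly $x_{j_i+1},\dots,x_n$. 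Multiplying by any such $x_{j_{i+1}} > x_{j_i}$ extends the strictly decreasing chain by one step, giving precisely the claimed form at level $i+1$. The fundamental syzygies then furnish the marked basis $F^{(i+1)}$ and the morphism $\partial_{i+1}$.

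Next I would compute ranks and termination. Fix $x^{\underline{\alpha}}\ee_k \in \PP{U}$ with $\deg(x^{\underline{\alpha}}\ee_k)=h$ and $\min(x^{\underline{\alpha}})=x_q$. The generators it contributes to the $i$-th syzygy module are indexed by strictly decreasing sequences $n \geqslant j_1 > \cdots > j_i > q$, whose number is $\binom{n-q}{i}$, and each such generator has degree $h + i$. Summing over all $x^{\underline{\alpha}}\ee_k \in \PP{U}$ of fixed degree $h$ and fixed min index $q$, and setting $j = h + i$, yields
\[
r_{i,j} = \sum_{q=1}^{n-i} \binom{n-q}{i} g_{j-i,q}(U),
\]
since $\binom{n-q}{i} = 0$ when $q > n-i$. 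Termination happens exactly when no admissible sequence of length $i$ exists: for the largest index $q=D$ we still have $n - D$ available variables, but one more step would force $j_1 > n$. Thus $F^{(n-D)}$ is non-trivial while $F^{(n-D+1)}$ is empty, giving a resolution of the stated length.

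The main obstacle I foresee is the inductive step: one must check that the stable module $U^{(i)}$ whose marked basis is $F^{(i)}$ is \emph{exactly} the one whose Pommaret basis is the set of strictly decreasing products described above, so that Theorem~\ref{th:fundamental} at the next stage produces only the new chains of length $i+1$ and no unexpected generators. This is ensured by the observation that $U^{(i)}$ is stable (Remark~\ref{rk:syzygy modules stable}), so its Pommaret basis coincides with its monomial basis, combined with the fact that multiplying $x_{j_1}\cdots x_{j_i}\,\ff^{(0)}_{\underline{\alpha},k}$ by a non-multiplicative variable $x_{j_{i+1}} \leqslant x_{j_i}$ would either repeat an index or produce a term already lying in a Pommaret cone of $U^{(i)}$; the strict ordering $x_{j_{i+1}} > x_{j_i}$ is precisely the condition for a new Pommaret generator, keeping the recursion clean. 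Once this bookkeeping is done, the binomial count and the bound $D$ complete the proof.
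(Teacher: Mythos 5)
Your overall strategy---iterating Theorem~\ref{th:fundamental} and counting the Pommaret basis elements produced at each stage---is exactly the route the paper indicates (the theorem is imported from \cite{ABRS}, and the paper states just before it that the resolution is obtained ``by repeatedly applying Theorem~\ref{th:fundamental}''). However, there is a genuine error in your inductive step, precisely at the point you yourself flag as the main obstacle. The $i$-th syzygy module does not sit in the free module with basis $\{\ff^{(0)}_{\underline{\alpha},k}\}$: it sits in a new free module whose basis is indexed by $\PP{U^{(i-1)}}$, and by \eqref{eq:pommaretBasisU'} the head terms of $F^{(i)}$ are \emph{linear} there, of the form $x_\ell\,\ff^{(i-1)}$ where $x_\ell$ is the variable most recently adjoined to the chain. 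Since $\min(x_\ell)=x_\ell$, the non-multiplicative variables available for the next step are those greater than the \emph{last-added} variable; and because each newly adjoined variable must exceed the previous one, the chains are built in strictly increasing order, so the last-added variable is the \emph{maximum} of the chain. Your rule instead takes the variables greater than $x_{j_i}=\min(x_{j_1}\cdots x_{j_i})$, i.e.\ greater than the \emph{minimum} of the chain, which comes from computing the Pommaret cone of a degree-$i$ monomial in the wrong free module.

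This is not a harmless relabeling: with your rule the recursion over-counts. For a generator with $\min=x_q$ and $n\geqslant q+3$, the subset $\{q+1,q+2,q+3\}$ would be produced both from $\{q+1,q+2\}$ (adjoining $x_{q+3}$) and from $\{q+1,q+3\}$ (adjoining $x_{q+2}>x_{q+1}$), and nothing prevents adjoining a variable already present in the chain; the bijection with $i$-subsets of $\{q+1,\dots,n\}$, hence the count $\binom{n-q}{i}$, does not follow. Your attempted repair in the final paragraph only discusses $x_{j_{i+1}}\leqslant x_{j_i}$, which is already excluded (such a variable is multiplicative), and is silent on the genuinely problematic range $x_{j_i}<x_{j_{i+1}}\leqslant x_{j_1}$. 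The fix is short---record each chain as the increasing sequence of adjoined variables and observe that the extension condition ``new variable $>$ last one adjoined'' makes every $i$-subset of $\{q+1,\dots,n\}$ arise exactly once---but as written the key combinatorial identification, and with it formula \eqref{eq:rksBetti}, is not established. (A minor slip: $D$ is the smallest, not the largest, of the indices $q$; your termination argument is otherwise correct.)
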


From now on, we refer to the resolution \eqref{eq:freeres} as the \emph{$U$-resolution} of $M$, and the numbers $r_{i,j}$ in \eqref{eq:rksBetti} as the Betti numbers of the $U$-resolution. Moreover, we denote by $\beta_{i,j}(M)$ the graded Betti numbers of $M$, i.e., the Betti numbers of its minimal resolution. We also denote by $\reg(M)$ and $\mathrm{pdim}(M)$ its regularity and projective dimension of $M$, respectively.

\begin{corollary}\label{cor:maggiorazioni}
With the same notation as in Theorem \ref{thm:freeRes},
\begin{enumerate}[(i)]
\item $\beta_{i,j}(M)\leqslant r_{i,j}(U)$
\item $\reg(M)\leqslant \reg(U)$
\item $\mathrm{pdim}(M)\leqslant \mathrm{pdim}(U)$.
\end{enumerate}
Furthermore, for every $t>0$, let $U^{(t)}$ be the stable module such that the $t$-th fundamental syzygies form a $U^{(t)}$-marked basis of the $t$-th syzygy module $\ker(\partial_{t-1})$. Then,
\begin{enumerate}[(i)]\setcounter{enumi}{3}
\item
$\beta_{i,j}(M) \leqslant \beta_{i-t,j}(U^{(t)}) \leqslant r_{i,j}(U), \qquad\forall\ i \geqslant  t,\ \forall\ j$.
\end{enumerate}
\end{corollary}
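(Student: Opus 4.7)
The plan is to deduce (i)--(iii) directly from the existence and the shape of the $U$-resolution of $M$, combined with two classical properties of quasi-stable modules: the Castelnuovo--Mumford regularity $\reg(U)$ equals the maximal degree in $\PP{U}$, and the projective dimension $\mathrm{pdim}(U)$ equals $n - D$. Part (iv) will be obtained by iterating Theorem \ref{th:fundamental} to identify $\ker(\partial_{t-1})$ with a module generated by a marked basis over a stable module, and then invoking the fact that for stable modules the corresponding $U$-resolution is minimal.

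For (i), since the $U$-resolution is a graded free resolution of $M$, the graded Betti numbers $\beta_{i,j}(M) = \dim_\kk \mathrm{Tor}^R_i(M,\kk)_j$ satisfy $\beta_{i,j}(M)\leqslant r_{i,j}(U)$ by the standard minimality argument: any graded free resolution splits as the direct sum of the minimal one and a contractible complex. For (ii) and (iii), the formula \eqref{eq:rksBetti} makes it clear that $r_{i,j}(U)\neq 0$ forces $j-i$ to be the degree of some element of $\PP{U}$, and that the length of the $U$-resolution is exactly $n-D$. Combining (i) with the classical equalities $\reg(U)=\max\{\deg(x^{\underline{\alpha}}\ee_k) : x^{\underline{\alpha}}\ee_k \in \PP{U}\}$ and $\mathrm{pdim}(U)=n-D$ for quasi-stable $U$ (known results in the Pommaret basis literature), we obtain $\reg(M)\leqslant \reg(U)$ and $\mathrm{pdim}(M)\leqslant \mathrm{pdim}(U)$.

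For (iv), iterating Theorem \ref{th:fundamental} identifies $\ker(\partial_{t-1})$ with the module generated by a $U^{(t)}$-marked basis, where $U^{(t)}$ is stable (see Remark \ref{rk:syzygy modules stable}). For a stable module, the Pommaret basis coincides with the minimal monomial basis and the $U^{(t)}$-resolution is the Eliahou--Kervaire resolution, which is minimal; hence $\beta_{i-t,j}(U^{(t)}) = r_{i-t,j}(U^{(t)})$. Next, the module $\ker(\partial_{t-1})$ agrees with the minimal $t$-th syzygy of $M$ up to a free direct summand (this is the usual Schanuel-type comparison of any resolution with the minimal one), so for $i\geqslant t$ we have $\beta_{i,j}(M)\leqslant \beta_{i-t,j}(\ker(\partial_{t-1}))$. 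Applying (i) to $\ker(\partial_{t-1})$ over $U^{(t)}$ then yields the first inequality $\beta_{i,j}(M)\leqslant \beta_{i-t,j}(U^{(t)})$. For the second inequality, an induction on $t$ based on the hockey-stick identity $\sum_{q>q'}\binom{n-q}{i-1}=\binom{n-q'}{i}$ applied to \eqref{eq:rksBetti} shows $r_{i-t,j}(U^{(t)})=r_{i,j}(U)$; combined with the minimality of the Eliahou--Kervaire resolution, this gives $\beta_{i-t,j}(U^{(t)})=r_{i,j}(U)$, so the claimed inequality is actually an equality.

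The main technical point I expect to navigate carefully is the combinatorial identity $r_{i,j}(U)=r_{i-t,j}(U^{(t)})$ in (iv): this requires describing the Pommaret basis of $U^{(t)}$ in terms of that of $U^{(t-1)}$, counting by minimum variable, and applying the hockey-stick identity, all of which is a straightforward induction starting from $t=1$. The rest of the argument is essentially a bookkeeping exercise combining Theorem \ref{thm:freeRes}, Theorem \ref{th:fundamental}, and classical facts about Pommaret-basis invariants of quasi-stable modules.
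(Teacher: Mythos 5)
Your proposal is correct and its overall architecture matches the paper's: items \emph{(i)}--\emph{(iii)} follow from the existence and shape of the $U$-resolution together with the classical Pommaret-basis identities $\reg(U)=\max\deg\PP{U}$ and $\mathrm{pdim}(U)=n-D$ (the paper simply cites \cite[Corollary 6.8]{ABRS} for this), and item \emph{(iv)} is obtained, as in the paper, by iterating Theorem \ref{th:fundamental} and applying \emph{(i)} to $\ker(\partial_{t-1})$ over the stable module $U^{(t)}$, using the minimality of the Eliahou--Kervaire resolution to convert $r_{i-t,j}(U^{(t)})$ into $\beta_{i-t,j}(U^{(t)})$. (You are in fact slightly more careful than the printed proof in writing $\beta_{i,j}(M)\leqslant\beta_{i-t,j}(\ker(\partial_{t-1}))$ rather than equality, which matters when the resolution is non-minimal in the first $t$ steps and $\ker(\partial_{t-1})$ carries a free summand.) The one genuine divergence is the last inequality $r_{i-t,j}(U^{(t)})\leqslant r_{i,j}(U)$: the paper gets this for free by observing that the $U$-resolution of $M$ \emph{contains} the $U^{(t)}$-resolution of $\ker(\partial_{t-1})$ as its tail, so the two collections of ranks are literally the same numbers, whereas you propose to recompute $r_{i-t,j}(U^{(t)})=r_{i,j}(U)$ by an induction on $t$ using the recursion $g_{h+1,i}(U^{(t)})=\sum_{q<i}g_{h,q}(U^{(t-1)})$ and the hockey-stick identity. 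Your computation is valid (and independently confirms the combinatorial consistency of formula \eqref{eq:rksBetti} across the steps of the resolution), but it is strictly more work than the nesting observation, which is immediate from the way the $U$-resolution is constructed in Theorem \ref{thm:freeRes}.
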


\begin{proof}
For the first three items, see \cite[Corollary 6.8]{ABRS}. For the last item, let $\textnormal{Syz}^{(t)}(M) = \ker(\partial_{t-1})$ be the $t$-th syzygy module of $M$, which has a $U^{(t)}$-marked basis. Then, one has
\[
\begin{tikzpicture}
\node at (0,0) {$0\xrightarrow{}\bigoplus R_A(-j)^{r_{n-D,j}}\xrightarrow{\partial_{n-D}}\cdots
\xrightarrow{}\bigoplus R_A(-j)^{r_{t,j}}
\xrightarrow{\partial_t}\bigoplus R_A(-j)^{r_{t-1,j}}\xrightarrow{\partial_{t-1}} \cdots \xrightarrow{\partial_0} M \xrightarrow{} 0$};
\begin{scope}[shift={(-1,0)}]
\node (ker) at (2,-1) [inner sep=2pt] {$\ker(\partial_{t-1})$};
\draw[->] (ker) -- (2.8,-0.35);
\draw[->] (0.8,-0.35) -- (ker);
\node (01) at (3.2,-1.65) [inner sep=2pt] {$0$};
\draw[->] (ker) -- (01);
\node (02) at (0.8,-1.65) [inner sep=2pt] {$0$};
\draw[<-] (ker) -- (02);
\end{scope}
\end{tikzpicture}
\]
so that the $U$-resolution of $M$ contains the $U^{(t)}$-resolution of $\textnormal{Syz}^{(t)}(M)$. Using the first part of the corollary, we obtain
\[
\beta_{i,j}(M) = \beta_{i-t,j}(\textnormal{Syz}^{(t)}(M)) \leqslant \beta_{i-t,j}(U^{(t)}) \leqslant r_{i-t,j}(U^{(t)}) \leqslant r_{i,j}(U),\quad \forall\ i \geqslant t.\qedhere
\]
\end{proof}

%%%%%%%%%%%%%%%%%%%%%%%%%%%
%% Minimal U-resolutions %%
%%%%%%%%%%%%%%%%%%%%%%%%%%%

\section{Minimal $U$-resolutions}\label{sec:minimal-resolution}

Starting from Theorem \ref{thm:freeRes} and Corollary \ref{cor:maggiorazioni}, it is natural to ask whether the Betti numbers $r_{i,j}$ of the $U$-resolution \eqref{eq:freeres} of the module $M$ actually coincide with its own Betti numbers, i.e., whether a $U$-resolution is minimal. 

Recall that a free resolution is minimal if and only if every entry of its matrices is non-invertible. In the case of a $U$-resolution, the entries of the matrices belong to $R_A$, whose invertible elements are the invertible elements of $A$. An invertible element of $A$ is a non-null element, but a non-zero element of $A$ can be not invertible, unless we assume that $A$ is a field.

%, where an invertible element of $A$ is a non-null element, but a non-zero element of $A$ can be not invertible, unless we assume that $A$ is a field.
The first step toward answering this question is to analyze the behavior of the resolution for the monomial module $U$ itself.

\begin{lemma}\label{lemma:minstable}
Let $U \subset R_A^m(-\dd)$ be a quasi-stable module. Then, the $U$-resolution of $U$ is minimal if, and only if, $U$ is stable. Consequently, for a stable module $U$, $\beta_{i,j}(U)=r_{i,j}$.  
\end{lemma}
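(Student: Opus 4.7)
My plan is to analyze the matrix $\partial_1$ of the $U$-resolution of $U$ explicitly, since both directions hinge on whether any of its entries are non-zero constants of $A$. When $M = U$, the marked basis is simply $F = \{x^{\underline{\alpha}}\ee_k\}_{x^{\underline{\alpha}}\ee_k \in \PP{U}}$ (each monomial acts as its own marked element with no tail), so for $x_i$ non-multiplicative for $x^{\underline{\alpha}}$ the product $x_i x^{\underline{\alpha}}\ee_k$ reduces in a single step according to the disjoint-cone decomposition: $x_i x^{\underline{\alpha}} = x^{\underline{\eta}} x^{\underline{\gamma}}$ with $x^{\underline{\gamma}}\ee_k \in \PP{U}$ and $\max(x^{\underline{\eta}}) \leqslant \min(x^{\underline{\gamma}})$. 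The fundamental syzygy therefore reads $\msyz{i,\underline{\alpha},k}{} = x_i \ff_{\underline{\alpha},k} - x^{\underline{\eta}} \ff_{\underline{\gamma},k}$, and $\partial_1$ contains a non-zero constant entry exactly when $x^{\underline{\eta}} = 1$, i.e.\ exactly when $x_i x^{\underline{\alpha}}\ee_k$ itself lies in $\PP{U}$.

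For the $(\Leftarrow)$ direction I would assume $U$ stable, so $\PP{U} = \mathcal{B}_U$. If $x_i x^{\underline{\alpha}}\ee_k$ were in $\mathcal{B}_U$ it would be properly divisible by the minimal generator $x^{\underline{\alpha}}\ee_k \in \mathcal{B}_U$, contradicting minimality; hence $\deg(x^{\underline{\eta}}) \geqslant 1$ for every fundamental syzygy and $\partial_1$ has no constant entries. By Remark~\ref{rk:syzygy modules stable} the module $U'$ over which $F^{\partial_1}$ is a marked basis is again stable, and iterating Theorem~\ref{th:fundamental} the $U$-resolution of the stable module $U$ coincides with the classical Eliahou--Kervaire resolution, which is known to be minimal \cite{EK}; the consequence $\beta_{i,j}(U) = r_{i,j}$ follows at once.

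For the $(\Rightarrow)$ direction I would prove the contrapositive: if $U$ is quasi-stable but not stable, then there exist $x^{\underline{\alpha}}\ee_k \in \PP{U}$ and a non-multiplicative $x_i$ with $x_i x^{\underline{\alpha}}\ee_k \in \PP{U}$, producing a constant entry in $\partial_1$. Non-stability yields some $x^{\underline{\alpha}'}\ee_k \in U$ with $x_j := \min(x^{\underline{\alpha}'})$ and some $x_i > x_j$ such that $x_i x^{\underline{\alpha}'}/x_j \cdot \ee_k \notin U$; a short check shows that for any minimal generator $x^{\underline{\alpha}}\ee_k \in \mathcal{B}_U$ dividing $x^{\underline{\alpha}'}\ee_k$ one still has $\min(x^{\underline{\alpha}}) = x_j$ and $x_i x^{\underline{\alpha}}/x_j \cdot \ee_k \notin U$, so I may assume $x^{\underline{\alpha}}\ee_k \in \mathcal{B}_U$. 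I would then write $x_i x^{\underline{\alpha}} = x^{\underline{\eta}} x^{\underline{\gamma}}$ with $x^{\underline{\gamma}}\ee_k \in \PP{U}$ and $\max(x^{\underline{\eta}}) \leqslant \min(x^{\underline{\gamma}})$ and force $x^{\underline{\eta}} = 1$ by a three-step support analysis: every variable of $x_i x^{\underline{\alpha}}$ is $\geqslant x_j$, hence so is every variable of $x^{\underline{\eta}}$; the variable $x_j$ cannot divide $x^{\underline{\eta}}$, because otherwise $x_i x^{\underline{\alpha}}/x_j = (x^{\underline{\eta}}/x_j)x^{\underline{\gamma}} \in U$, contradicting the choice; and since $x_j$ divides $x_i x^{\underline{\alpha}} = x^{\underline{\eta}} x^{\underline{\gamma}}$ but not $x^{\underline{\eta}}$, it divides $x^{\underline{\gamma}}$, so $\max(x^{\underline{\eta}}) \leqslant \min(x^{\underline{\gamma}}) \leqslant x_j$. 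These three constraints are compatible only with $x^{\underline{\eta}} = 1$, yielding $x_i x^{\underline{\alpha}}\ee_k = x^{\underline{\gamma}}\ee_k \in \PP{U}$ and a constant entry $-1$ in $\partial_1$.

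The hardest step is the combinatorial forcing of $x^{\underline{\eta}} = 1$ in $(\Rightarrow)$, together with the preliminary reduction of the non-stability witness to a minimal generator: both are delicate but purely combinatorial and only use the disjoint-cone characterization of the Pommaret basis. A milder subtlety appears in $(\Leftarrow)$, where the non-vanishing of degrees in $\partial_1$ must be propagated to all subsequent $\partial_t$; the cleanest route is to identify the whole $U$-resolution of stable $U$ with the Eliahou--Kervaire resolution rather than redoing a syzygy-by-syzygy analysis at each level.
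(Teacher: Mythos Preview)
Your proposal is correct. The $(\Leftarrow)$ direction is handled exactly as in the paper: both of you identify the $U$-resolution of a stable $U$ with the Eliahou--Kervaire resolution and invoke its minimality. Your additional direct check that $\partial_1$ has no constant entries is fine but redundant once EK is cited.

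For $(\Rightarrow)$ the two arguments reach the same endpoint (a fundamental syzygy $x_i\ff_{\underline{\alpha},k}-\ff_{\underline{\gamma},k}$ with $x^{\underline{\alpha}}\ee_k,x^{\underline{\gamma}}\ee_k\in\PP{U}$) but arrive there differently. The paper simply asserts that some $x^{\underline{\gamma}}\ee_k\in\PP{U}\setminus\mathcal{B}_U$ can be written as $x_i\,x^{\underline{\alpha}}\ee_k$ with $x^{\underline{\alpha}}\ee_k\in\mathcal{B}_U$ and $x_i>\min(x^{\underline{\alpha}})$, relying implicitly on the standard structure theory of Pommaret bases. You instead start from the exchange-property characterization of (non-)stability, reduce the witness to a minimal generator, and then force $x^{\underline{\eta}}=1$ via a clean three-step support argument. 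Your route is longer but entirely self-contained: it does not require knowing in advance that an element of $\PP{U}\setminus\mathcal{B}_U$ adjacent to $\mathcal{B}_U$ exists. The paper's route is shorter but leaves that adjacency claim to the reader.
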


\begin{proof}
The claim follows from \cite[Theorem 8.9]{Seiler2009II}, where exactly the same resolution is discussed. However, for the sake of completeness we provide a brief proof. 

If $U =\bigoplus J^{(k)}\ee_k$ is stable, then each ideal $J^{(k)} \subset R_A$ is stable, and the resolution \eqref{eq:freeres} coincides to the Eliahou-Kervaire resolution, which is minimal \cite{EK}.

If $U$ is quasi-stable but not stable, then $\mathcal{B}_U \subsetneq \mathcal{P}_U$. More precisely, there exists a term $x^{\underline{\gamma}} \,\ee_k \in \mathcal{P}_U \setminus \mathcal{B}_U$, such that
\[
x^{\underline{\gamma}} \,\ee_k = x_i \, x^{\underline{\alpha}} \,\ee_k \quad\text{with}\quad x^{\underline{\alpha}} \,\ee_k \in \mathcal{B}_U\text{~and~}x_i > \min x^{\underline{\alpha}}.
\]
Hence, among the set of fundamental syzygies $F^{\partial_1}$ we find $x_i\ff_{\underline{\alpha},k} - \ff_{\underline{\gamma},k}$, and the matrix of the morphism $\partial_1$ in \eqref{eq:freeres} contains an invertible element of $A$.
\end{proof}

The second step is to understand what happens for the modules belonging to $\MFFunctor{U}(A)$, that is, for the modules $M$ generated by a $U$-marked basis. If $U$ is stable, is the $U$-resolution of every module $M$ minimal? If $U$ is quasi-stable but not stable, is the $U$-resolution of every module $M$ non-minimal? The following examples show that the minimality (resp.~non-minimality) of the resolution of $U$ does not imply the minimality (resp.~non-minimality) of the $U$-resolution of $M$.
In particular, in Example~\ref{ex:minimality-quasi-stable} we present an explicit module $M$ generated by a $U$-marked basis such that $\beta_{i,j}(M)=r_{ij}> \beta_{i,j}(U)$ for some~$i, j$.

\begin{example}\label{ex:minimality-quasi-stable}
Consider again the quasi-stable ideal $J = (x_2^2,x_1^2) \subset R = \kk[x_0,x_1,x_2]$, and let $I$ be the ideal generated by the $J$-marked basis introduced in Example \ref{ex:section2-part2}. Even though the $J$-resolution of $J$ is not minimal
\[
0 \xrightarrow{} R(-3)\oplus R(-4)\xrightarrow{\scriptsize\left[\begin{array}{cc} 0 & -x_1^2 \\ \HTemph{x_2} & 0 \\ -1 & \HTemph{x_2} \end{array}\right]}R(-2)^2 \oplus R(-3) \xrightarrow{\scriptsize\left[\begin{array}{ccc} \HTemph{x_2^2} & \HTemph{x_1^2} & \HTemph{x_2x_1^2}\end{array}\right]} J \xrightarrow{} 0,
%0 \xrightarrow{} \begin{array}{c}R(-3)\\ \oplus\\ R(-4)\end{array}  \xrightarrow{\scriptsize\left[\begin{array}{cc} 0 & -x_1^2 \\ \HTemph{x_2} & 0 \\ -1 & \HTemph{x_2} \end{array}\right]}\begin{array}{c}R(-2)^2\\ \oplus\\ R(-3)\end{array} \xrightarrow{\scriptsize\left[\begin{array}{ccc} \HTemph{x_2^2} & \HTemph{x_1^2} & \HTemph{x_2x_1^2}\end{array}\right]} J \xrightarrow{} 0,
\]
the $J$-resolution of the ideal $I$ is, in fact, minimal
\[
0 \xrightarrow{} R(-3) \oplus R(-4)\xrightarrow{\scriptsize\left[\begin{array}{cc} x_1 & -x_1^2+2x_1x_0 \\ \HTemph{x_2}-x_0 & x_1x_0 \\ 0 & \HTemph{x_2} - x_1 \end{array}\right]}R(-2)^2 \oplus R(-3)\xrightarrow{\scriptsize\left[\begin{array}{ccc} f_{x_2^2} & f_{x_1^2} & f_{x_2x_1^2}\end{array}\right]} I \xrightarrow{} 0.
%0 \xrightarrow{} \begin{array}{c}R(-3)\\ \oplus\\ R(-4)\end{array}  \xrightarrow{\scriptsize\left[\begin{array}{cc} x_1 & -x_1^2+2x_1x_0 \\ \HTemph{x_2}-x_0 & x_1x_0 \\ 0 & \HTemph{x_2} - x_1 \end{array}\right]}\begin{array}{c}R(-2)^2\\ \oplus\\ R(-3)\end{array} \xrightarrow{\scriptsize\left[\begin{array}{ccc} f_{x_2^2} & f_{x_1^2} & f_{x_2x_1^2}\end{array}\right]} I \xrightarrow{} 0.
\]
On the other hand, let $K$ be the ideal generated by the following $J$-marked basis
\[
 G= \left\{ g_{x_2^2} = \HTemph{x_2^2} - \tfrac{1}{2}x_2x_1- x_2x_0 ,\  g_{x_1^2} = \HTemph{x_1^2} -\tfrac{1}{2} x_2x_1 - x_1x_0,\ g_{x_2x_1^2} = \HTemph{x_2x_1^2} - 2x_2x_1x_0 \right\}.
\]
This ideal has a non-minimal $J$-resolution
\[
0 \xrightarrow{} R(-3) \oplus R(-4)\xrightarrow{\scriptsize\left[\begin{array}{cc} \tfrac{1}{2}x_1 & -x_1^2+2x_1x_0 \\ \HTemph{x_2} & 0 \\ -\tfrac{3}{4} & \HTemph{x_2} - \tfrac{1}{2}x_1-x_0 \end{array}\right]}R(-2)^2 \oplus R(-3) \xrightarrow{\scriptsize\left[\begin{array}{ccc} g_{x_2^2} & g_{x_1^2} & g_{x_2x_1^2}\end{array}\right]} K \xrightarrow{} 0.
%0 \xrightarrow{} \begin{array}{c}R(-3)\\ \oplus\\ R(-4)\end{array}  \xrightarrow{\scriptsize\left[\begin{array}{cc} \tfrac{1}{2}x_1 & -x_1^2+2x_1x_0 \\ \HTemph{x_2} & 0 \\ -\tfrac{3}{4} & \HTemph{x_2} - \tfrac{1}{2}x_1-x_0 \end{array}\right]}\begin{array}{c}R(-2)^2\\ \oplus\\ R(-3)\end{array} \xrightarrow{\scriptsize\left[\begin{array}{ccc} g_{x_2^2} & g_{x_1^2} & g_{x_2x_1^2}\end{array}\right]} K \xrightarrow{} 0.
\]
\end{example}

\begin{example}\label{ex:minimality-stable}
The ideals $I$ and $K$ considered in the previous examples can be generated also by 
$J'$-marked bases, where  $J'$ is the stable ideal $(x_2^2,x_2x_1,x_1^3) \subset \kk[x_0,x_1,x_2]$. The ideal $I$ (resp.~$K$) is generated by $F'$ (resp.~$G'$):
%two marked bases with respect to the stable ideal $J' = (x_2^2,x_2x_1,x_1^3) \subset \kk[x_0,x_1,x_2]$:
\[
\begin{split}
F' &= \left\{ f'_{x_2^2} = \HTemph{x_2^2}-x_1^2-x_2x_0+x_1x_0,\ f'_{x_2x_1}= \HTemph{x_2x_1}-x_1^2,\ f'_{x_1^3} = \HTemph{x_1^3}-2x_1^2x_0+2x_1x_0^2\right\},\\
G' &= \left\{ g'_{x_2^2} = \HTemph{x_2^2}-x_1^2-x_2x_0 + x_1x_0,\ g'_{x_2x_1}= \HTemph{x_2x_1}-2x_1^2+2x_1x_0,\ g'_{x_1^3} = \HTemph{x_1^3}-3x_1^2x_0+2x_1x_0^2\right\}.
\end{split}
\]
In this case, the $J'$-resolution is minimal for both $J'$ and $I$, but non-minimal for $K$, since the ranks $r_{i,j}$ are the same in both the $J$- and $J'$-resolutions.
\end{example}

Examples \ref{ex:minimality-quasi-stable} and \ref{ex:minimality-stable} show that, by deforming the Pommaret basis of a quasi-stable module $U$ into a $U$-marked basis, the presence or absence of an invertible element in one of the matrices of the $U$-resolution can change. 
Under some simple sufficient conditions, the following more precise results hold for a stable module $U$ .

Following \cite{HH1999, Seiler2009II}, recall that a free resolution of a  graded module $M$ is said to be \emph{linear} if the non-zero entries of its matrices are linear forms. It is straightforward that a linear free resolution is minimal, and that if $M$ admits a linear free resolution, then $M$ is generated in a single degree. 

\begin{proposition}\label{prop:StableSingleDegree}
Let $U \subset R_A^m$ be a stable module, and let $M\subset R_A^m$ be any module generated by a $U$-marked basis.  
\begin{enumerate}[\it (i)]
\item If $M$ is generated in a single degree $s>0$, i.e., $M=(M_s)$, then the $U$-resolution of $M$ is linear.

\item Assume that $A=\mathbb K$. If $U$ has no minimal generators in two consecutive degrees, then the $U$-resolution of $M$ is minimal.
\end{enumerate}
In both  cases (i) and (ii), $\reg(M)=\reg(U)$.
\end{proposition}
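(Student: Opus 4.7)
The plan is to analyze the structure of the fundamental syzygies \eqref{eq:fundamental-syzygy} via degree considerations, and then iterate using Theorem \ref{th:fundamental}. Recall that in a fundamental syzygy $\msyz{i,\underline{\alpha},k}{} = \HTemph{x_i\ff_{\underline{\alpha},k}} - \sum P_{\underline{\gamma},\ell}\ff_{\underline{\gamma},\ell}$, the degree identity imposes $\deg(P_{\underline{\gamma},\ell}) = d'_k + 1 - d'_\ell$ for every non-zero coefficient. The matrices of the $U$-resolution have precisely these $P_{\underline{\gamma},\ell}$ as entries (at the first step, and analogously, by iteration, at each subsequent step), so constraining their degrees will yield both linearity in part \emph{(i)} and the absence of invertible constant entries in part \emph{(ii)}.

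For \emph{(i)}, since $U$ is stable we have $\PP{U} = \mathcal{B}_U$, and the assumption $M = (M_s)$ forces every element of $\PP{U}$ to have degree $s$. The degree identity then yields $\deg(P_{\underline{\gamma},\ell}) = 1$ for every non-zero $P_{\underline{\gamma},\ell}$, so the matrix of $\partial_1$ has entries that are either zero or linear forms. By Theorem \ref{th:fundamental} and Remark \ref{rk:syzygy modules stable}, $F^{\partial_1}$ is a $U'$-marked basis of $\syz(F)$ where $\PP{U'}$ is concentrated in a single degree $s+1$; the argument for $\partial_1$ therefore applies verbatim to $\partial_2$, and by induction to all subsequent morphisms. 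Hence the $U$-resolution is linear, in particular minimal.

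For \emph{(ii)}, a non-zero coefficient $P_{\underline{\gamma},\ell} \in \kk$ is an invertible entry of the matrix of $\partial_1$ precisely when $\deg(P_{\underline{\gamma},\ell}) = 0$, i.e.~when $d'_\ell = d'_k + 1$ for some pair of head terms of $F$; but this would contradict the hypothesis that $\mathcal{B}_U = \PP{U}$ has no elements in two consecutive degrees. Thus the matrix of $\partial_1$ contains no non-zero constant. To iterate, note that the set of degrees of $\PP{U'}$ is $\{d'_k + 1 : x^{\underline{\alpha}}\ee_k \in \PP{U}\}$, which is a shift by $1$ of the degree set of $\PP{U}$ and therefore still contains no two consecutive integers. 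Applying Theorem \ref{th:fundamental} at each step, the same argument shows that none of the matrices of the $U$-resolution contains an invertible entry; the resolution is minimal.

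Finally, for the regularity statement, in both cases the minimality of the $U$-resolution gives $\beta_{i,j}(M) = r_{i,j}(U)$ for all $i,j$. Since $U$ is stable, Lemma \ref{lemma:minstable} yields $\beta_{i,j}(U) = r_{i,j}(U)$ as well, hence $\reg(M) = \max\{j-i : r_{i,j}(U) \neq 0\} = \reg(U)$. The main subtlety I anticipate is the bookkeeping of the iteration: one must verify that the stable module $U^{(t)}$ in Corollary \ref{cor:maggiorazioni} inherits, respectively, the single-degree-generation property (shifted by $t$) or the no-two-consecutive-degrees property, so that the degree argument on the coefficients $P_{\underline{\gamma},\ell}$ can be reapplied at every step of the resolution; but both invariants are manifestly preserved under the shift induced by passing to fundamental syzygies.
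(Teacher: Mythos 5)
Your degree bookkeeping on the entries $P_{\underline{\gamma},\ell}$ is equivalent to the paper's own argument, which reads the same information off the twists of the free modules $\bigoplus R_A(-j)^{r_{t,j}}$ (these are determined by $\PP{U}$ alone via \eqref{eq:rksBetti}): in both versions one observes that an entry of $\partial_t$ is homogeneous of degree $(d_a+t)-(d_b+t-1)=d_a-d_b+1$, where $d_a,d_b$ run over the degrees of elements of $\PP{U}$, so it is a linear form when all these degrees coincide and is never a non-zero constant when no two of them are consecutive. Part \emph{(ii)}, the iteration through the higher syzygy modules, and the regularity claim are all fine and match the paper.

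There is, however, one step in part \emph{(i)} that fails: the assertion that ``the assumption $M=(M_s)$ forces every element of $\PP{U}$ to have degree $s$.'' This implication is false, and the paper's own examples contain a counterexample: the ideal $K$ of Examples \ref{ex:minimality-quasi-stable} and \ref{ex:minimality-stable} satisfies $K=(K_2)$, since its minimal free resolution is $0\to R(-4)\to R(-2)^2\to K\to 0$ (Example \ref{ex:componentwise}), yet $K$ is generated by a marked basis over the stable ideal $J'=(x_2^2,x_2x_1,x_1^3)$, whose Pommaret basis has elements in degrees $2$ and $3$, and the $J'$-resolution of $K$ is explicitly non-minimal, hence not linear. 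The point is that Corollary \ref{cor:maggiorazioni} only gives $\beta_{0,j}(M)\leqslant r_{0,j}(U)$; a degree-$(s+1)$ element of the marked basis may be a redundant generator of $M$, so $M=(M_s)$ does not force $\PP{U}$ to be concentrated in degree $s$. The paper's proof quietly sidesteps this by opening with ``Since $U$ is generated only in one degree $s$,'' i.e.\ it reads the hypothesis as a condition on $U$ (equivalently, on the marked basis) rather than on $M$; the statement as literally worded is therefore imprecise, and the $K$/$J'$ example above contradicts it. Your argument is correct once the hypothesis is taken to be that $\PP{U}$ lies in a single degree, but the bridge you assert from $M=(M_s)$ to that condition is a genuine gap, not a routine verification.
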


\begin{proof}
\emph{(i)} Since $U$ is generated only in one degree $s>0$, the $U$-resolution of $M$ takes the form
\[
0\xrightarrow{} R_A(-s-n+D)^{r_{n-D,s+n-D}}\xrightarrow{\partial_{n-D}}\cdots
\xrightarrow{} R_A(-s-1)^{r_{1,s+1}}
\xrightarrow{\partial_1} R_A(-s)^{r_{0,s}}\xrightarrow{\partial_0} I \xrightarrow{} 0. 
\]

\emph{(ii)} If the free module encoding the generators in the $U$-resolution of $M$ is
\[
R_A(-j_0)^{r_{0,j_0}} \oplus R_A(-j_1)^{r_{0,j_1}} \oplus \cdots \oplus R_A(-j_p)^{r_{0,j_p}}
\]
then the free module at step $t$ of the resolution has the form
\[
R_A(-j_0-t)^{r_{t,j_0+t}} \oplus R_A(-j_1-t)^{r_{t,j_1+t}} \oplus \cdots \oplus R_A(-j_p-t)^{r_{0,j_p+t}}.
\]
The assumption $j_i - j_{i-1} > 1$ for $i = 1,\ldots,p$ ensures that the same degree cannot appear in two consecutive free modules in any $U$-resolution, so that there cannot be non-zero constant entries in the matrices of the morphisms. 
\end{proof}

One may now wonder whether, in the case of a quasi-stable but not stable module $U$, the Betti numbers of $U$ provide a lower bound for the Betti numbers of a module $M$ generated by a $U$-marked basis. The following examples show that this is not necessarily the case.

\begin{example}
Consider the ideals introduced in Examples \ref{ex:minimality-quasi-stable} and \ref{ex:minimality-stable}, and define $U = J\ee_1 \oplus J'\ee_2$,  $M = I\ee_1 \oplus I\ee_2$, and $N = K\ee_1 \oplus K\ee_2$. The modules $M$ and $N$ have $U$-marked bases $F\ee_1 \cup F'\ee_2$ and $G\ee_1 \cup G'\ee_2$, respectively, their Betti tables are
\[
\begin{array}{r|cc}r_{i,j} & 0 & 1 \\ \hline 2 & 4 & 2  \\ 3 & 2 & 1\end{array}\qquad\qquad\qquad \begin{array}{r|cc}U & 0 & 1 \\ \hline 2 & 4 & 1  \\ 3 & 1 & 1\end{array}  \qquad\quad\begin{array}{r|cc}M & 0 & 1 \\ \hline 2 & 4 & 2  \\ 3 & 2 & 1\end{array}\qquad\quad \begin{array}{r|cc}N & 0 & 1 \\ \hline 2 & 4 & 0  \\ 3 & 0 & 1\end{array}
\]
and the $U$-resolution is minimal only for the module $M$.
\end{example}

\begin{example}
By combining contributions of both stable and quasi-stable but not stable ideals, one can also construct quasi-stable ideals whose deformations, defined by marked bases, exhibit both increasing and decreasing Betti numbers. Starting again from the ideals $J$ and $J'$ introduced in Example \ref{ex:section2-part1}, consider the quasi-stable ideal $U = x_2^2 J + x_1^2 J' \subset \kk[x_0,x_1,x_2]$, with Pommaret basis $x_2^2\mathcal{P}_J \cup x_1^2 \mathcal{P}_{J'}$, and the ideals $I$ and $K$ generated by the $U$-marked bases
\[
\begin{split}
&F_I = \{\HTemph{x_2^4},\hspace{1.1cm}\HTemph{x_2^2 x_1^2},\hspace{0.9cm}\HTemph{x_2x_1^3} + x_1^4,\hspace{0.5cm}\HTemph{x_2^3x_1^2},\ \HTemph{x_1^5} \},\\
&F_{K} =  \{\HTemph{x_2^4} - x_1^4,\ \HTemph{x_2^2 x_1^2}+x_1^4,\ \HTemph{x_2x_1^3} + x_2^3x_1,\ \HTemph{x_2^3x_1^2},\ \HTemph{x_1^5} \}.
\end{split}
\]
The Betti tables are
\[
\begin{array}{r|cc}r_{i,j} & 0 & 1 \\ \hline 4 & 3 & 2  \\ 5 & 2 & 2\end{array}\qquad\qquad\qquad\begin{array}{r|cc}U & 0 & 1 \\ \hline 4 & 3 & 1  \\ 5 & 1 & 2\end{array}\qquad\qquad \begin{array}{r|cc}I & 0 & 1 \\ \hline 4 & 3 & 0  \\ 5 & 0 & 2\end{array}\qquad\qquad  \begin{array}{r|cc}K & 0 & 1 \\ \hline 4 & 3 & 2  \\ 5 & 2 & 2\end{array}
\]
and the $U$-resolution is minimal only for the ideal $K$.
\end{example}

Our next goal is to identify a general condition ensuring that the $U$-resolution of a module $M$ generated by a $U$-marked basis is minimal.

The following statement and its consequences extend to marked bases some results that are already known in the case where $J$ is the initial ideal of the ideal $I$, under additional assumptions (see~\cite[Lemma~8.1]{Seiler2009II} and \cite[Corollaries~2.4 and 2.7]{CHH}, respectively). The key point is that the $J$-resolution \eqref{eq:freeres} is constructed using fundamental syzygies. 

\begin{theorem}\label{prop:firstSyzForMin} Let $U$ be a quasi-stable module in $R_A(-\dd)$, and let $M \subset R_A(-\dd)$ be a module generated by a $U$-marked basis $F$.  If the matrix representing the module homomorphism $\partial_{t}$, for $t > 0$, in the $U$-resolution of $M$ contains no non-zero constant entries, then the same holds for the matrix representing $\partial_{t+1}$.
\end{theorem}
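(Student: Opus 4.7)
The plan is to track polynomial degrees through the iterative reduction process that constructs the fundamental syzygies of the $U$-resolution. A column of $\partial_t$ representing a level-$t$ fundamental syzygy $f^{(t)}_{\underline\beta}$ is, by iterated application of \eqref{eq:fundamental-syzygy}, of the form $\HTemph{x_i\, \ff^{(t-1)}_{\underline\alpha}} - \sum_{\underline\gamma} P_{\underline\gamma, \underline\beta}\, \ff^{(t-1)}_{\underline\gamma}$, where $\{\ff^{(t-1)}_\bullet\}$ denotes the chosen basis of the free module $F_{t-1}$ appearing at the previous step of \eqref{eq:freeres}. The hypothesis says that every non-head coefficient $P_{\underline\gamma, \underline\beta}$ is either zero or of positive degree. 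The conclusion reduces to proving the same property for the non-head coefficients $Q_{\underline\mu, (j, \underline\beta)}$ in every level-$(t+1)$ fundamental syzygy $\HTemph{x_j\, \ff^{(t)}_{\underline\beta}} - \sum_{\underline\mu} Q_{\underline\mu, (j, \underline\beta)}\, \ff^{(t)}_{\underline\mu}$, where the $Q$'s are produced by the reduction $x_j\, f^{(t)}_{\underline\beta} = \sum_{\underline\mu} Q_{\underline\mu, (j, \underline\beta)}\, f^{(t)}_{\underline\mu}$ modulo the $U^{(t)}$-marked basis $F^{\partial_t}$.

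Since $j > i$ (because $x_j$ is non-multiplicative for the head $x_i\, \ff^{(t-1)}_{\underline\alpha}$) and $U^{(t)}$ is stable (Remark~\ref{rk:syzygy modules stable}), the chain structure of multi-indices guarantees that $x_j\, \ff^{(t-1)}_{\underline\alpha}$ is itself the head of a unique level-$t$ syzygy $f^{(t)}_{\underline\beta'}$. The first reduction step subtracts $x_i\, f^{(t)}_{\underline\beta'}$ from $x_j\, f^{(t)}_{\underline\beta}$; the leading terms cancel, and the residual
\[
x_i \sum_{\underline\delta} P_{\underline\delta, \underline\beta'}\, \ff^{(t-1)}_{\underline\delta} \;-\; x_j \sum_{\underline\gamma} P_{\underline\gamma, \underline\beta}\, \ff^{(t-1)}_{\underline\gamma}
\]
has every non-zero coefficient of some $\ff^{(t-1)}_{\underline\sigma}$ of degree $\geqslant 2$ by the hypothesis on $\partial_t$. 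An induction on the (finitely many, by \cite[Propositions~4.8 and 4.11]{ABRS}) subsequent reduction steps preserves this invariant: eliminating a term $c\, x^{\underline\eta}\, \ff^{(t-1)}_{\underline\sigma}$ with $\deg x^{\underline\eta} \geqslant 2$ amounts to subtracting $c\, (x^{\underline\eta}/x_{i^{(t)}_{\underline\mu}})\, f^{(t)}_{\underline\mu}$, where $\underline\mu$ is uniquely determined by the Pommaret decomposition of $x^{\underline\eta}\, \ff^{(t-1)}_{\underline\sigma}$ in $U^{(t)}$; the multiplier has degree $\geqslant 1$, and the new residual contributions, being products of this multiplier with some $P$-entry of $\partial_t$, again have degree $\geqslant 1 + 1 = 2$.

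Consequently, each $Q_{\underline\mu, (j, \underline\beta)}$ is the sum of degree-$\geqslant 1$ multipliers contributed by the reduction steps involving $f^{(t)}_{\underline\mu}$, augmented (only in the case $\underline\mu = \underline\beta'$) by the initial multiplier $x_i$. Since the resolution is graded, each $Q_{\underline\mu, (j, \underline\beta)}$ is homogeneous, and therefore either zero or of degree $\geqslant 1$; in particular, no non-zero constant entry can occur in $\partial_{t+1}$. The main technical obstacle is verifying that the \emph{degree $\geqslant 2$} invariant on the residual coefficients is achieved after the very first reduction step and then preserved by every subsequent one, a property that depends crucially on the uniformity of the hypothesis across all entries: every non-head coefficient in every column of $\partial_t$ must have degree $\geqslant 1$.
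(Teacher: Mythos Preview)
Your proof is correct and follows essentially the same approach as the paper: both track polynomial degrees through the Noetherian reduction $\cridast{F^{\partial_t}}$, using the hypothesis to guarantee that every term of $x_j f^{(t)}_{\underline\beta}$ has monomial part of degree $\geqslant 2$, and then observing inductively that each reduction multiplier therefore has degree $\geqslant 1$ while new residual contributions (multiplier times a tail entry of $\partial_t$) again have degree $\geqslant 2$. The only cosmetic difference is that you single out the first reduction step (cancelling the head term via $x_i f^{(t)}_{\underline\beta'}$), whereas the paper treats all terms of $x_j f^{(t)}_{\underline\beta}$ uniformly from the outset; since the invariant already holds for every term of $x_j f^{(t)}_{\underline\beta}$, this special handling is unnecessary but harmless.
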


\begin{proof}
The columns of the matrix representing $\partial_t: \bigoplus_j R_A(-j)^{r_{t,j}} \to \bigoplus_j R_A(-j)^{r_{t-1,j}}$ describe the $U^{(t)}$-marked basis $F^{\partial_t}$ generating the module of the $t$-th syzygies of $M$. Recall that for every $t>0$, the module $U^{(t)}$ is stable and linearly generated (see Remark \ref{rk:syzygy modules stable}), that is,
\[
U^{(t)} = \bigoplus J^{(t,k)} \ee_k,\qquad \text{where~}J^{(t,k)} = (x_n,\ldots,x_{i_{t,k}}).
\]
By hypothesis, the matrix $\partial_t$ has no non-zero constant entries. Therefore, the elements in $F^{\partial_t}$ are of the form
\[
f_{x_i,k} = \HTemph{x_i\, \ee_k} - \sum_{x^{\underline{\gamma}}\, \ee_\ell \in \mathcal{N}(U^{(t)})} c_{x_i,k}^{\underline{\gamma},\ell}\, x^{\underline{\gamma}}\, \ee_\ell,
\]
where $\deg(x_i\,\ee_k) = \deg(x^{\underline{\gamma}}\, \ee_\ell)$ and $\deg(x^{\underline{\gamma}}) > 0$ (with $c_{x_i,k}^{\underline{\gamma},\ell}$ possibly zero). For each $f_{x_i,k}$ and for any $x_h > x_i$, we construct the fundamental syzygy $\msyz{h,x_i,k}{} \in F^{\partial_{t+1}}$ via the unique writing \eqref{eq:unique writing} 
\[
x_h f_{x_i,k} = \sum_{{x_l}\, \ee_\ell \in \mathcal{P}_{U^{(t)}}}P_{x_l,\ell} f_{x_l,\ell} \qquad \Rightarrow\qquad \msyz{h,x_i,k}{}  =\HTemph{x_h \ff_{x_i,k}} - \sum_{{x_l}\, \ee_\ell \in \mathcal{P}_{U^{(t)}}}P_{x_l,\ell} \ff_{x_l,\ell}.
\]
This writing comes from the reduction $x_h f_{x_i,k} \cridast{F} 0$. Observe that for each term $x_h x^{\underline{\gamma}} \ee_{\ell} \in \supp(x_h f_{x_i,k}) \cap U^{(t)}$, the degree of $x_h x^{\underline{\gamma}}$ is strictly greater than $1$. Hence, in the reduction step $x_h f_{x_i,k} \to x_h f_{x_i,k} - c_{x_i,k}^{\underline{\gamma},\ell} \tfrac{x_h x^{\underline{\gamma}}}{x_l} f_{x_l,\ell}$, where $x_l = \min x^{\underline{\gamma}}$, each $f_{x_l,\ell}$ is always multiplied by a monomial of positive degree.  Repeating this process shows that the polynomials $P_{x_l,\ell}$ in $\msyz{h,x_i,k}{}$ do not contain any constants. Therefore, the matrix representing $\partial_{t+1}$ contains no non-zero constant entries, completing the proof.
\end{proof}

\begin{corollary}\label{cor:minimality-betti-numbers} 
Assume $A=\kk$. 
Using the same notation as in Theorem \ref{prop:firstSyzForMin}:
\begin{enumerate}[\it (i)]
\item the $U$-resolution of $M$ is a minimal resolution if, and only if, the $U$-marked basis $F$ is a minimal set of generators of $M$;
\item if $\beta_{k,j}(M)=r_{k,j}(U)$ for some $k$ and for every $j$, then $\beta_{i,j}(M)=r_{i,j}(U)$ for every $i\geqslant k$ and every $j$. In particular, if $U$ is stable, then
\[
\beta_{k,j}(M)=\beta_{k,j}(U),\ \forall\ j \qquad \Longrightarrow \qquad \beta_{i,j}(M)=\beta_{i,j}(U),\ \forall\ i \geqslant k,\ \forall\ j
\]
\end{enumerate}
\end{corollary}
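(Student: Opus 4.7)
My plan is to derive both parts from Theorem~\ref{prop:firstSyzForMin} using the standard characterization: over $A = \kk$, a graded free resolution of $M$ is minimal if and only if $F_0 \to M$ induces an isomorphism modulo $\mathfrak m = (x_0, \dots, x_n)$ and every subsequent matrix $\partial_i$, $i \geqslant 1$, has entries in $\mathfrak m$, i.e.~no non-zero constants.

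For part~\emph{(i)}, the direction ``$\Rightarrow$'' is immediate from the definition of minimality at step~$0$. For ``$\Leftarrow$'', Theorem~\ref{prop:firstSyzForMin} reduces the task to showing that the matrix of $\partial_1$ has no non-zero constants, and I would argue this by contradiction. If a fundamental syzygy $\msyz{i,\underline{\alpha},k}{} = x_i \ff_{\underline{\alpha},k} - \sum P_{\underline{\gamma},\ell}\, \ff_{\underline{\gamma},\ell}$ carried a non-zero constant coefficient $c = P_{\underline{\gamma}_0,\ell_0}$, then rewriting
\[
c\cdot f_{\underline{\gamma}_0,\ell_0} \;=\; x_i\, f_{\underline{\alpha},k} \;-\!\! \sum_{(\underline{\gamma},\ell)\neq(\underline{\gamma}_0,\ell_0)}\!\! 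P_{\underline{\gamma},\ell}\, f_{\underline{\gamma},\ell}
\]
and reducing modulo $\mathfrak m M$ would make the term $x_i f_{\underline{\alpha},k}$ vanish; on the right only those summands with $P_{\underline{\gamma},\ell}$ a non-zero constant survive (necessarily with $f_{\underline{\gamma},\ell}$ of the same degree as $f_{\underline{\gamma}_0,\ell_0}$). This produces a non-trivial $\kk$-linear relation among the images of elements of $F$ in $M/\mathfrak m M$, contradicting the minimality of $F$.

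For part~\emph{(ii)}, I would apply~\emph{(i)} to the module $\textnormal{Syz}^{(k)}(M)$ equipped with its $U^{(k)}$-marked basis $F^{\partial_k}$. The iterative construction underlying Theorem~\ref{thm:freeRes} shows that the tail of the $U$-resolution of $M$ from step~$k$ coincides with the $U^{(k)}$-resolution of $\textnormal{Syz}^{(k)}(M)$, so $r_{i-k,j}(U^{(k)}) = r_{i,j}(U)$ for all $i \geqslant k$. Since the minimal free resolution of $M$ splits off as a direct summand of any graded free resolution (graded Schanuel), one has
\[
\beta_{k,j}(M) \;\leqslant\; \beta_{0,j}\big(\textnormal{Syz}^{(k)}(M)\big) \;\leqslant\; r_{k,j}(U),
\]
and the hypothesis $\beta_{k,j}(M) = r_{k,j}(U)$ forces both inequalities to be equalities. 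The right equality says $F^{\partial_k}$ is a minimal generating set of $\textnormal{Syz}^{(k)}(M)$, so \emph{(i)} yields $\beta_{i-k,j}(\textnormal{Syz}^{(k)}(M)) = r_{i-k,j}(U^{(k)})$ for every $i \geqslant k$; the left equality rules out a free direct summand in $\textnormal{Syz}^{(k)}(M)$, so $\beta_{i-k,j}(\textnormal{Syz}^{(k)}(M)) = \beta_{i,j}(M)$ for $i \geqslant k$. Chaining these identifications gives $\beta_{i,j}(M) = r_{i,j}(U)$ for all $i \geqslant k$ and $j$. The ``in particular'' clause follows at once from Lemma~\ref{lemma:minstable}, which identifies $\beta_{i,j}(U)$ with $r_{i,j}(U)$ when $U$ is stable.

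The main subtlety is justifying the Schanuel-type inequality $\beta_{k,j}(M) \leqslant \beta_{0,j}(\textnormal{Syz}^{(k)}(M))$. If appealing to graded Schanuel feels too heavy, a self-contained detour works: a non-zero constant entry in $\partial_{k+1}$ would, via cancellation of a matching pair of direct summands at steps $k$ and $k+1$, produce a free resolution of $M$ with rank at step $k$ in some degree $j$ strictly smaller than $r_{k,j}(U)$, contradicting $\beta_{k,j}(M) = r_{k,j}(U)$. Theorem~\ref{prop:firstSyzForMin} then propagates the absence of non-zero constants to every $\partial_i$ with $i \geqslant k+1$, giving $\beta_{i,j}(M) = r_{i,j}(U)$ for all $i \geqslant k$.
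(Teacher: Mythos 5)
Your proposal is correct and follows essentially the same route as the paper: part \emph{(i)} is the observation that minimality of $F$ forces $\partial_1$ to have no non-zero constant entries, after which Theorem~\ref{prop:firstSyzForMin} propagates this to all later matrices, and part \emph{(ii)} reduces the hypothesis $\beta_{k,j}(M)=r_{k,j}(U)$ to the statement that $\partial_{k+1}$ has no non-zero constant entries and again invokes Theorem~\ref{prop:firstSyzForMin}. The only difference is that you spell out, via graded Schanuel (or the cancellation of trivial complexes), the step the paper asserts without detail — namely that the hypothesis in \emph{(ii)} forces $F^{\partial_k}$ to be a minimal generating set of $\textnormal{Syz}^{(k)}(M)$ with no free summand — which is a welcome addition but not a different argument.
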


\begin{proof}
\emph{(i)} One direction is immediate. Conversely, if $F$ is a minimal set of generators of $M$, then the matrix representing $\partial_1$ contains no non-zero constant entries. The statement then follows directly from Theorem \ref{prop:firstSyzForMin}, under the assumption $A=\kk$.

\emph{(ii)} If $\beta_{k,j}(M)=r_{k,j}(U)$ for some $k$ and all $j$, then the matrix representing $\partial_{k+1}$ contains no non-zero constant entries. By Theorem \ref{prop:firstSyzForMin}, the same property holds for all subsequent maps $\partial_i$, and the equality of Betti numbers follows for all $i \geqslant k$.
\end{proof}

\begin{proposition}\label{prop:term order} Let $A=\kk$. 
Consider  a quasi-stable but not stable ideal $J$, and let $I$  be an ideal generated by a $J$-marked basis such that its $J$-resolution is minimal. Then, for any term order $\prec$, the $J$-marked basis $F$ of $I$ is not a Gr\"obner basis of $I$ with initial ideal $J$.
\end{proposition}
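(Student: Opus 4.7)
The plan is to argue by contradiction, aiming to contradict Corollary \ref{cor:minimality-betti-numbers}(i): since $A = \kk$, the $J$-resolution of $I$ is minimal if and only if the $J$-marked basis $F$ is a minimal generating set of $I$. So it suffices to show that if $F$ were a Gröbner basis of $I$ with initial ideal $J$ for some term order $\prec$, then $F$ could not be a minimal generating set of $I$.

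The first step I would take is to identify the $\prec$-leading terms of the elements of $F$. By the definition of a $J$-marked basis, each $f_{\underline{\alpha}} \in F$ satisfies $\supp(f_{\underline{\alpha}}) \subset \{x^{\underline{\alpha}}\} \cup \mathcal{N}(J)$, and $x^{\underline{\alpha}}$ is the only term of this support that lies in $J$. Since by hypothesis $\textnormal{in}_\prec(f_{\underline{\alpha}}) \in \textnormal{in}_\prec(I) = J$, it must coincide with the head term $x^{\underline{\alpha}} = \Ht(f_{\underline{\alpha}})$. Hence the set of $\prec$-leading terms of $F$ is exactly $\mathcal{P}_J$. Now the assumption that $J$ is quasi-stable but \emph{not} stable ensures $\mathcal{B}_J \subsetneq \mathcal{P}_J$, so I can pick a term $x^{\underline{\gamma}} \in \mathcal{P}_J \setminus \mathcal{B}_J$ and consider $F' := F \setminus \{f_{\underline{\gamma}}\}$. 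The leading terms of $F'$ still contain $\mathcal{B}_J$, which already generates $J$. By the standard criterion for Gröbner bases, $F'$ is itself a Gröbner basis of the ideal $(F')$, and the equality $\textnormal{in}_\prec(F') = J = \textnormal{in}_\prec(I)$ forces $(F') = I$. Therefore $f_{\underline{\gamma}} \in (F')$, showing that $F$ is not a minimal generating set of $I$ and completing the contradiction via Corollary \ref{cor:minimality-betti-numbers}(i).

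The only delicate point I anticipate is the very first step: the identification $\textnormal{in}_\prec(f_{\underline{\alpha}}) = \Ht(f_{\underline{\alpha}})$. This is the translation between the marked-basis language and the Gröbner-basis language on which the whole argument hinges, and it rests on the two facts that $\mathcal{N}(J) \cap J = \emptyset$ and that the tail of $f_{\underline{\alpha}}$ lies entirely in $\mathcal{N}(J)$. Once this identification is in place, the removal of a redundant Pommaret generator is purely formal, and the conclusion follows from Corollary \ref{cor:minimality-betti-numbers}(i) without further calculation.
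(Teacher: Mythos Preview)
Your proof is correct. The paper's argument and yours share the same underlying observation---that when $J$ is quasi-stable but not stable, the inclusion $\mathcal{B}_J \subsetneq \mathcal{P}_J$ forces a redundancy---but they locate the contradiction in different places. You work at the level of generators: after showing $\textnormal{in}_\prec(f_{\underline{\alpha}}) = \Ht(f_{\underline{\alpha}})$ (the step you correctly flag as the only delicate one), you remove an element $f_{\underline{\gamma}}$ with $x^{\underline{\gamma}} \in \mathcal{P}_J \setminus \mathcal{B}_J$, observe that the remaining leading terms still generate $J$, and conclude via Corollary~\ref{cor:minimality-betti-numbers}\emph{(i)} that $F$ cannot be a minimal generating set. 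The paper instead stays at the level of syzygy matrices: $\partial_1^J$ has an invertible entry (by the argument in Lemma~\ref{lemma:minstable}), while the minimality hypothesis forces $\partial_1^I$ to have none, and an unspecified ``property of Gr\"obner bases'' is invoked to declare this incompatible. Your route is more explicit about the Gr\"obner-basis mechanism and avoids any discussion of how the syzygies of $I$ relate to those of $J$; the paper's route keeps the argument phrased in the resolution-theoretic language of the section but leaves the key Gr\"obner step implicit.
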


\begin{proof} 
Since $J$ is quasi-stable but not stable, the matrix of the homomorphism $\partial_1^J$ in the $J$-resolution of $J$ contains some invertible entries. On the other hand, in the matrix $\partial_1^I$ of the $J$-resolution of $I$, every entry is either zero or non-constant because the resolution of $I$ is minimal.  By the properties of Gr\"obner bases, this situation cannot occur if $F$ were a Gr\"obner basis of $I$ with initial ideal~$J$.
\end{proof}

We conclude this section by focusing on $U$-resolutions of componentwise linear graded modules. Recall that a graded module $M$ is called \emph{componentwise linear} if, for every degree $d\geqslant 0$, the module generated by the component $M_d$ of degree $d$ has a linear resolution; that is, $\reg(M_d)=d$. Equivalently, the only non-vanishing Betti numbers of the module generated by $M_d$ are $\beta_{i,i+d}$ for $i=0,1,\ldots$ (see \cite{HH1999,Seiler2009II}). Hence, componentwise linear modules generalize the situation already considered in item~{\em (i)} of Proposition \ref{prop:StableSingleDegree}, taking into account that a stable monomial module is componentwise linear.

The notion of componentwise linear module, introduced in \cite{HH1999}, plays an important role in the general study of minimal free resolutions. Several related results have been established in terms of initial ideals and generic initial ideals (see \cite{HH1999,Conca,CHH,Seiler2009II}, \cite[Theorem 2.9]{CS2015}, \cite[subsection~3.3]{HSS2018}). As emphasized in \cite{HSS2018}, a generic position associated with stability can be defined in terms of the componentwise linearity, making it natural to develop analogous results in the context of marked bases.

For simplicity, all statements, arguments and proofs in the remainder of this section are presented for ideals in $R_A$, although they can naturally be extended to modules in $R_A^m(-\dd)$. Indeed, as the examples illustrate, each ideal $J^{(k)}$ in the decomposition $U = \bigoplus J^{(k)}\ee_k$ gives rise to an independent subset $F^{\partial_1}$ of the fundamental syzygies. 

The following statement generalizes \cite[Theorem 8.2]{Seiler2009II} and \cite[Theorem~1.1]{AHH} to marked bases in arbitrary characteristic.

\begin{theorem}\label{thmn:minimal-compwLinear}
Let us assume $A=\kk$.
Let $J \subset R$ be a quasi-stable ideal, and let $I\subset R$ be an ideal generated by a $J$-marked basis $F$. If the $J$-resolution of $I$ is minimal, then $I$ is componentwise linear.
\end{theorem}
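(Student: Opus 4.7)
The plan is to prove that, for every degree $d\geqslant 0$, the ideal $(I_d)\subset R$ generated by the degree-$d$ component of $I$ admits a $d$-linear free resolution; this is by definition the componentwise linearity of $I$.

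For each $d$, I would apply Proposition~\ref{prop:basisTrunc} to obtain a $J_{\geqslant d}$-marked basis $\widetilde{F}$ of the truncation $I_{\geqslant d}$. For $d$ at least the maximum degree of an element of $\PP{J}$, the explicit description of $\PP{J_{\geqslant d}}$ in the proof of Proposition~\ref{prop:basisTrunc} places all its elements in degree $d$, and a direct check based on the Pommaret cone partition shows that, in this regime, $\PP{J_{\geqslant d}}$ coincides with the minimal monomial basis of $J_{\geqslant d}$. Hence $J_{\geqslant d}$ is stable, and $I_{\geqslant d}=(I_d)$ is equi-generated in degree $d$ by $\widetilde{F}$.

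The next step is to transfer the minimality of the $J$-resolution of $I$ to the $J_{\geqslant d}$-resolution of $I_{\geqslant d}$. Corollary~\ref{cor:minimality-betti-numbers}(i) gives that $F$ minimally generates $I$. A parallel argument shows that $\widetilde{F}$ minimally generates $I_{\geqslant d}$: a hypothetical non-trivial constant combination of elements of $\widetilde{F}$ equal to zero would, through the reductions $g_{\underline{\gamma},k}$ used to build $\widetilde{F}$ in the proof of Proposition~\ref{prop:basisTrunc}, lift to a constant combination of elements of $F$ equal to zero, contradicting the minimality of $F$. Theorem~\ref{prop:firstSyzForMin} then propagates the absence of non-zero constant entries through all subsequent matrices in the $J_{\geqslant d}$-resolution of $I_{\geqslant d}$. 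Combined with Proposition~\ref{prop:StableSingleDegree}(i), this yields a minimal $d$-linear resolution of $(I_d)$ for all $d\geqslant\max\bigl\{\deg(x^{\underline\alpha}\ee_k):x^{\underline\alpha}\ee_k\in\PP{J}\bigr\}$.

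The main obstacle is handling the remaining small values of $d$. For these, I would argue that the degree-$d$ strand of the minimal $J$-resolution of $I$ provides the desired $d$-linear resolution of $(I_d)$: the only degrees that genuinely need to be checked for componentwise linearity are those of the minimal generators of $I$, which (by Corollary~\ref{cor:minimality-betti-numbers}(i) and the marked basis description) are contained among the Pommaret degrees of $J$. Exactness of the $d$-strand should then be obtained by tracking the supports of the fundamental syzygies in \eqref{eq:fundamental-syzygy} via the refined prediction of Corollary~\ref{cor:suppSyz}, and by using the minimality hypothesis to rule out constant-degree contributions coming from lower-degree Pommaret elements. This strand-level analysis is the delicate point of the proof and requires a careful combinatorial argument based on the Pommaret cone partition of $J$.
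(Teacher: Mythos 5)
Your argument is complete only in the range $d\geqslant\max\{\deg(x^{\underline\alpha}):x^{\underline\alpha}\in\PP{J}\}$, where $J_{\geqslant d}$ is stable and equigenerated and Proposition~\ref{prop:StableSingleDegree}\emph{(i)} applies directly (there the minimality of $\widetilde F$ is automatic, since $(I_d)$ has no generators below degree $d$). The difficulty is that these large degrees carry no content: the degrees that actually matter for componentwise linearity are those of the minimal generators of $I$, i.e.\ precisely the small values of $d$ you defer to the last paragraph. For those degrees you only assert that the $d$-linear strand of the minimal $J$-resolution of $I$ ``should'' resolve $(I_d)$ and that the verification ``requires a careful combinatorial argument''; but the exactness of that linear strand as a resolution of $(I_d)$ is essentially equivalent to the statement being proved, so this is a genuine gap, not a routine check. (Your reduction to generator degrees also silently uses the Herzog--Hibi fact that $\mathfrak m\cdot(I_d)$ inherits a linear resolution from $(I_d)$, which should at least be cited.)

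The missing idea, which is how the paper handles every degree $s$ uniformly, is the following. Pass to the $J_{\geqslant s}$-marked basis $\tilde F$ of $I_{\geqslant s}$ (Proposition~\ref{prop:basisTrunc}); minimality of the $J$-resolution of $I$ gives that $\tilde F$ minimally generates $I_{\geqslant s}$, so in the unique writing of $x_i\tilde f_{\underline\alpha}$ for $\tilde f_{\underline\alpha}\in\tilde F_s$ and $x_i$ non-multiplicative, every coefficient attached to an element of $\tilde F\setminus\tilde F_s$ is a constant and must therefore vanish. This yields formula~\eqref{eq:grado s}: $x_i\tilde f_{\underline\alpha}$ rewrites purely in terms of $\tilde F_s$. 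One then checks that reducing an arbitrary syzygy of $\tilde F_s$ to fundamental syzygies never creates non-zero components on $\tilde F\setminus\tilde F_s$, so $\syz(\tilde F_s)$ is generated by the fundamental syzygies of $\tilde F_s$, all of degree $s+1$, and the argument propagates along the whole resolution via Theorem~\ref{th:fundamental}. Any strand-based formulation would in the end require an argument of exactly this type; as written, your proposal proves the theorem only for ideals generated in a single, sufficiently large degree.
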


\begin{proof}
For a given degree $s$, let $\tilde F$ be the $J_{\geqslant s}$-marked basis of $I_{\geqslant s}$, according to Proposition \ref{prop:basisTrunc}, and let $\tilde F_s=\{\tilde f_{\underline{\alpha}}\}$ be the subset of $\tilde F$ consisting of elements of degree $s$ of $\tilde F$. Clearly, $\tilde F_s$ forms linear basis for the $\kk$-module $(I_s)$. 

Since the $J$-resolution of $I$ minimal, the $J$-marked basis $F$ of $I$ is a minimal set of generators by item {\em (i)} of Corollary \ref{cor:minimality-betti-numbers}. Consequently, $\tilde F$ is a minimal set of generators of $I_{\geqslant s}$ as well, and the $J_{\geqslant s}$-resolution of $I_{\geqslant s}$ is minimal. 

Let $x_i$ be a non-multiplicative variable for the head term $x^{\underline \alpha}$ of $\tilde f_{\underline{\alpha}}\in \tilde F_s$, and consider the polynomial $x_i \tilde f_{\underline{\alpha}}$. By Proposition \ref{prop:riscritture}, there exists a unique expression 
\[
x_i \tilde f_{\underline{\alpha}}=\sum_{x^{\underline{\gamma}} \in \PP{U_{\geqslant s}}} P_{\underline{\gamma}} f_{\underline\gamma}
\]
where $P_{\underline{\gamma}}$ belongs to $\mathbb K$ if and only if $f_{\underline\gamma} \in \tilde F\setminus \tilde F_s$. If some $P_{\underline{\gamma}} \neq 0$, then $f_{\underline\gamma}$ would be dependent on other polynomials of $\tilde F$, contradicting the minimality of the $J_{\geqslant s}$-resolution of $I_{\geqslant s}$. Therefore, we have 
\begin{equation}\label{eq:grado s}
x_i \tilde f_{\underline{\alpha}}=\sum_{x^{\underline{\gamma}} \in \PP{U_{s}}} P_{\underline{\gamma}} \tilde f_{\underline\gamma}
\end{equation} 
expressed solely in terms of polynomials of $\tilde F_s$. Formula \eqref{eq:grado s} implies that, in the fundamental syzygy constructed from $x_i \tilde f_{\underline{\alpha}}$, all components corresponding to polynomials in $\tilde F\setminus \tilde F_s$ are zero.

A syzygy $S$ of $\tilde F_s$ is also a syzygy of $\tilde F$,  so it can be rewritten in terms of fundamental syzygies of $\tilde F$. At the first step of the rewriting, one must use a fundamental syzygy of $\tilde F_s$ to rewrite a term corresponding to a polynomial in $\tilde F_s$. By \eqref{eq:grado s}, this action does not introduce non-zero components corresponding to polynomials of $\tilde F\setminus \tilde F_s$. The subsequent steps of the rewriting follow the same behavior.

Hence, at every step, the components corresponding to polynomials in $\tilde F\setminus \tilde F_s$ remain zero. At the end of the rewriting, only fundamental syzygies of $\tilde F_s$ appear with non-zero coefficients. 

In conclusion, any syzygy of $\tilde F_s$ depends solely on the fundamental syzygies of $\tilde F_s$, which have degree $s+1$. By Theorem 2.11, the same argument applies to every subsequent module in the $J_{\geqslant s}$-resolution of $I_{\geqslant s}$, proving that $I$ is componentwise linear.
\end{proof}

\begin{remark}
If $J$ is stable, the proof of Theorem \ref{thmn:minimal-compwLinear} can be approached in an alternative way. In fact, if $J$ is stable, then each $(J_s)$ is also stable. Thanks to formula \eqref{eq:grado s}, $\tilde F_s$ is therefore a $(J_s)$-marked basis by Theorem~\ref{th:caratterizzazione}. We can then conclude by applying Proposition \ref{prop:StableSingleDegree}{\it (i)}, observing that the fundamental syzygies of every subsequent module in the $J_{\geqslant s}$-resolution of $I_{\geqslant s}$  are also marked on stable modules, as noted before formula \eqref{eq:pommaretBasisU'}.
\end{remark}

\begin{example}\label{ex:componentwise}
Returning to Example \ref{ex:minimality-quasi-stable}, we observe that the ideal $I$ is componentwise linear, whereas the ideal $K$ is not. The minimal free resolution of $K$ is indeed:
\[
0 \xrightarrow{} \begin{array}{c} R_A(-4)\end{array}  \xrightarrow{\scriptsize\left[\begin{array}{c} 
2x_0x_1 - 2x_1^2 + x_1x_2\\ 
{-2}x_0x_2 - x_1x_2 + 2x_2^2
\end{array}\right]}\begin{array}{c}R_A(-2)^2\\ \end{array} \xrightarrow{\scriptsize\left[\begin{array}{cc} f'_{x_2^2} & f'_{x_1^2}\end{array}\right]} K\xrightarrow{} 0.
\]
\end{example} 

It is well-known that the converse of Theorem \ref{thmn:minimal-compwLinear} does not hold (see \cite[Example 8.3]{Seiler2009II}). Nevertheless, although Example \ref{ex:componentwise} also shows that there exist componentwise linear ideals with minimal $J$-resolution where $J$ is quasi-stable but not stable, the following result holds.

\begin{theorem} \label{thmn:compwLinear-minimal}
Let us assume $A=\kk$.
Let $I$ be a componentwise linear ideal in $R_A$. Then there exists an open subset of linear change of coordinates $g$ such that $g(I)$ is generated by a $J$-marked basis, for some stable ideal $J$, and the $J$-resolution of $g(I)$ is minimal.
\end{theorem}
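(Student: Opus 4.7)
The plan is to realize $J$ as the reverse lexicographic generic initial ideal of $I$. Set $J := \Gin{\tau}(I)$ for $\tau$ the degree reverse lexicographic order. By Galligo's theorem there is a non-empty Zariski open subset $V \subset \mathrm{GL}_{n+1}(\kk)$ such that $\In{\tau}(g(I)) = J$ for every $g \in V$. One then argues that $J$ is stable: in characteristic zero this is automatic, as Borel-fixed ideals are strongly stable, hence stable; in positive characteristic it requires the known refinement that the revlex gin of a componentwise linear ideal is stable (not merely $p$-Borel).

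For every $g \in V$, the reduced Gr\"obner basis of $g(I)$ with respect to $\tau$ has head terms equal to the minimal monomial generators of $J$. Stability of $J$ gives $\mathcal{B}_J = \mathcal{P}_J$, while the tails lie in $\mathcal{N}(J)$ by the defining property of a reduced Gr\"obner basis. This Gr\"obner basis therefore satisfies Definition \ref{def:MarkedSet}, and is a $J$-marked basis of $g(I)$, witnessing that $g(I) \in \MFFunctor{J}(\kk)$.

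To show that the associated $J$-resolution of $g(I)$ is minimal, I would invoke the Aramova--Herzog--Hibi theorem: since $g(I)$ is componentwise linear (being a linear change of $I$), one has $\beta_{i,j}(g(I)) = \beta_{i,j}(J)$ for all $i,j$. Stability of $J$ together with Lemma \ref{lemma:minstable} gives $\beta_{i,j}(J) = r_{i,j}(J)$. Combining the two yields $\beta_{i,j}(g(I)) = r_{i,j}(J)$, which is precisely the minimality statement; equivalently, by Corollary \ref{cor:minimality-betti-numbers}\emph{(i)}, the $J$-marked basis so produced is a minimal generating set of $g(I)$.

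The main obstacle is ensuring stability of $J$ in positive characteristic: a Borel-fixed ideal is always quasi-stable but need not be stable when $\mathrm{char}(\kk) > 0$, so the required strengthening of Galligo/AHH---or alternatively an independent combinatorial construction of a stable ideal with the same Hilbert function as $I$ for which the marked-basis property can be verified via Theorem \ref{th:caratterizzazione}---is where the proof's technical weight is concentrated.
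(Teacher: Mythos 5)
Your proposal follows essentially the same route as the paper: pass to the degrevlex generic initial ideal $J$ of $I$, observe that the reduced Gr\"obner basis of $g(I)$ is a $J$-marked basis, and deduce minimality of the $J$-resolution from the equality of Betti numbers of a componentwise linear ideal and its gin. The two points you defer to ``known refinements'' are exactly where the paper supplies citations: stability of the revlex gin of a componentwise linear ideal in arbitrary characteristic is \cite[Lemma 1.4]{CHH}, and for the minimality step in positive characteristic (where \cite[Theorem 1.1]{AHH} does not apply) the paper invokes \cite[Theorem 9.12]{Seiler2009II}.
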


\begin{proof}
By \cite[Lemma 1.4]{CHH}, the generic initial ideal of a componentwise linear ideal with respect to the reverse lexicographical order induced by $x_0<x_1<\dots<x_n$ is stable, independently of the characteristic of the field. Hence, there exists an open subset of linear changes of coordinates $g$ such that $g(I)$ is generated by a reduced Gr\"obner basis $G$ with initial ideal equal to a stable ideal $J$. We conclude either by \cite[Theorem 9.12]{Seiler2009II}, or, in characteristic zero, by \cite[Theorem~1.1]{AHH}, noting that $G$ is also a $J$-marked basis.
\end{proof}

\begin{remark}\label{rem: oss 1}
Combining the results of Proposition \ref{prop:term order} and  Theorems \ref{thmn:minimal-compwLinear} and \ref{thmn:compwLinear-minimal}, we summarize the following: there exist componentwise linear ideals with marked bases over quasi-stable ideals that are not Gr\"obner bases. Nevertheless, up to a change of variables, they also admit marked bases over stable ideals, which are Gr\"obner bases.
\end{remark}

%%%%%%%%%%%%%%%%%%%%%%%%%
%% The syzygy functors %%
%%%%%%%%%%%%%%%%%%%%%%%%%

\section{The syzygy functors}\label{sec:SyzFunct}

In this section, we first recall the definition of the $U$-marked functor, introduced in \cite[Section~5]{ABRS}, which generalizes results from \cite{CMR2015,LR2}. We then highlight an immediate application of this definition to the study of the minimality of a $U$-resolution. Finally, we define functors which parametrize all (or part of) the $U$-resolutions and analyze their relationships.

\begin{definition}[{\cite[Section 5]{ABRS}}]
Ler $U=\bigoplus_{k=1}^m J^{(k)} \ee_k \subseteq R^m(- \dd)$ be a quasi-stable module. We define the {\em $U$-marked functor} as the covariant functor $\MFFunctor{U}: \underline{\text{Noeth~$\kk$-Alg}} \rightarrow \underline{\text{Sets}}$
that associates to any Noetherian $\kk$-algebra $A$ the set
\begin{equation}\label{eq:marked-functor}
\MFFunctor{U}(A) :=  \left\{ M \subset R_A^m (- \dd)\ \middle\vert\ M \oplus \langle\mathcal{N}(U)\rangle=  R_A^m (- \dd)\right\}
\end{equation}
and to any morphism $\sigma: A \rightarrow {A'}$ the map
\[
\begin{array}{rccc}
\MFFunctor{U}(\sigma):& \MFFunctor{U}(A) &\longrightarrow& \MFFunctor{U}({A'})\\
&M & \longmapsto& \sigma(M)
\end{array}
\]
where $\sigma$ also denotes its natural extension to the polynomial rings over $A$ and $A'$. Note that the image $\sigma(F)$ of the marked basis $F$ generating $M$ is again a $U$-marked basis, thanks to the monicity of head terms.
\end{definition}

This functor is representable, and the representing scheme $\MFScheme{U}$ parametrizes all the $U$-marked bases. Indeed, the condition $M \oplus \langle\mathcal{N}(U)\rangle=  R_A^m (- \dd)$ in \eqref{eq:marked-functor} is equivalent to requiring that $M$ is generated by a $U$-marked basis (see \cite[Theorem 4.13, Corollary 4.17, Theorem 5.1]{ABRS}). %\footnote{da aggiungere qualche riferimento} 
We briefly recall how to determine the equations defining the scheme $\MFScheme{U}$. For every term $x^{\underline{\alpha}}\mee{k}{}\in \PP{U}$, consider the element 
\begin{equation}\label{eq:marked set}
F_{\underline{\alpha},k} := \HTemph{x^{\underline{\alpha}}\mee{k}{}} - \sum_{\begin{subarray}{c}x^{\underline{\beta}}\mee{\ell}{} \in \mathcal{N}(U) \\ \deg x^{\underline{\beta}}\mee{\ell}{} = \deg x^{\underline{\alpha}}\mee{k}{} \end{subarray}} C_{\underline{\alpha},k}^{\underline{\beta},\ell}\, x^{\underline{\beta}}\mee{\ell}{}
\end{equation}
in the module $R^m_{\kk[C]}(-\dd)$, where $\kk[C]$ is the polynomial ring in the parameters
\[
C:=\left\{\pC{\underline\alpha}{k}{\underline\beta}{\ell} \ \middle\vert\ x^{\underline {\alpha}}\ee_k \in \PP{U}, x^{\underline{\beta}} \mee{\ell}{} \in \mathcal N(U), \deg(x^{\underline{\beta}} \mee{\ell}{})=\deg(x^{\underline {\alpha}}\ee_k)\right\}.
\]
Let $\mathcal F$ denote the set of all elements \eqref{eq:marked set}. By construction, $\mathcal F$ is a $U$-marked set. Hence, for each polynomial $F_{\underline{\alpha},k}$ and for every non-multiplicative variable $x_i$ of $x^{\underline\alpha}$, we can compute the $\mathcal F$-normal form of $x_i F_{\underline{\alpha},k}$:
\[
x_i F_{\underline{\alpha},k} \xrightarrow{\mathcal F^{\ast}}_\star \sum_{\begin{subarray}{c}x^{\underline{\eta}}\mee{\ell}{} \in \mathcal{N}(U) \\ \deg x^{\underline{\eta}}\mee{\ell}{} = \deg x^{\underline{\alpha}}\mee{k}{}+1 \end{subarray}} Q_{i,\underline{\alpha},k}^{\underline\eta,\ell}(C)\, x^{\underline\eta}\mee{\ell}{}
\]
This normal form arises from the unique writing (Proposition \ref{prop:riscritture})
\begin{equation}\label{eq:unique}
x_i F_{\underline{\alpha},k} = \sum_{x^{\underline{\gamma}}\, \mee{\ell}{} \in \PP{U}} P^{\underline{\gamma},\ell}_{i,\underline{\alpha},k}(C) F_{\underline\gamma,\ell} + 
\sum_{\begin{subarray}{c}x^{\underline{\eta}}\mee{\ell}{} \in \mathcal{N}(U) \\ \deg x^{\underline{\eta}}\mee{\ell}{} = \deg x^{\underline{\alpha}}\mee{k}{}+1 \end{subarray}}
Q_{i,\underline{\alpha},k}^{\underline\eta,\ell}(C)\, x^{\underline\eta}\mee{\ell}{}.
\end{equation}
By Theorem \ref{th:caratterizzazione}, the set $\mathcal  F$ is a $U$-marked basis if and only if all the above normal forms vanish. Thus, we define the ideal $\mathscr U \subseteq \kk[C]$ generated by polynomials $Q_{i,\underline{\alpha},k}^{\underline\eta,\ell}(C)$ which occur as the coefficients of the terms in $\mathcal{N}(U)$ in the $F$-normal forms of the polynomials $x_i F_{\underline{\alpha},k}$.

\begin{theorem}\label{thm:rappr}\cite[Theorem 5.1]{ABRS} 
The marked functor $\MFFunctor{U}$ is represented by the scheme $\MFScheme{U}:=\mathrm{Spec}(\kk[C]/\mathscr{U})$.
\end{theorem}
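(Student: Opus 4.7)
The plan is to establish the representability via Yoneda's lemma, i.e., to exhibit a natural bijection
\[
\MFFunctor{U}(A) \;\cong\; \mathrm{Hom}_{\kk\text{-Alg}}\bigl(\kk[C]/\mathscr{U},\, A\bigr)
\]
for every Noetherian $\kk$-algebra $A$, and to check that it is natural in $A$. The whole idea is that a $U$-marked set in $R_A^m(-\dd)$ is completely encoded by the $A$-valued coefficients $c_{\underline{\alpha},k}^{\underline{\beta},\ell}$, so prescribing such a set is the same as prescribing a $\kk$-algebra map $\kk[C] \to A$, while the marked-basis condition cuts out the closed subscheme $V(\mathscr{U})$.

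First, starting from a $\kk$-algebra morphism $\phi: \kk[C]/\mathscr{U} \to A$, define $F^\phi := \{\phi(F_{\underline{\alpha},k})\}$ by specializing each parameter in \eqref{eq:marked set}. This is, by construction, a $U$-marked set in $R_A^m(-\dd)$, so to show $(F^\phi) \in \MFFunctor{U}(A)$ it suffices, by Theorem \ref{th:caratterizzazione}, to check that $x_i F^\phi_{\underline{\alpha},k} \cridast{F^\phi} 0$ for every non-multiplicative $x_i$. Applying $\phi$ to the unique writing \eqref{eq:unique} and using the compatibility of the rewriting relation (Definition \ref{def:riscrittura}) with coefficient specialization, one gets
\[
x_i F^\phi_{\underline{\alpha},k} \;=\; \sum_{x^{\underline{\gamma}}\mee{\ell}{}\in \PP{U}} \phi\bigl(P^{\underline{\gamma},\ell}_{i,\underline{\alpha},k}(C)\bigr)\, F^\phi_{\underline{\gamma},\ell} \;+\; \sum \phi\bigl(Q_{i,\underline{\alpha},k}^{\underline{\eta},\ell}(C)\bigr)\, x^{\underline{\eta}}\mee{\ell}{}.
\]
Since $\phi$ factors through $\kk[C]/\mathscr{U}$ and the generators $Q_{i,\underline{\alpha},k}^{\underline{\eta},\ell}(C)$ of $\mathscr{U}$ map to $0$, the tail sum vanishes; hence $F^\phi$ is a $U$-marked basis, so $M^\phi := (F^\phi) \in \MFFunctor{U}(A)$.

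Conversely, given $M \in \MFFunctor{U}(A)$, the direct-sum decomposition $R_A^m(-\dd) = M \oplus \langle\mathcal{N}(U)\rangle$ uniquely lifts every $x^{\underline{\alpha}}\mee{k}{} \in \PP{U}$ to an element $f_{\underline{\alpha},k} = x^{\underline{\alpha}}\mee{k}{} - g_{\underline{\alpha},k}$ of $M$, with $g_{\underline{\alpha},k} \in \langle\mathcal{N}(U)\rangle$; these form the unique $U$-marked basis of $M$ (cf.~the remark after Definition~\ref{def:MarkedSet}). The resulting coefficients define a $\kk$-algebra map $\psi_M: \kk[C] \to A$, and because the $f_{\underline{\alpha},k}$ actually form a marked basis, Theorem~\ref{th:caratterizzazione} forces every $Q_{i,\underline{\alpha},k}^{\underline{\eta},\ell}$ to lie in $\ker(\psi_M)$, so $\psi_M$ factors uniquely through $\kk[C]/\mathscr{U}$. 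The two assignments $\phi \mapsto M^\phi$ and $M \mapsto \psi_M$ are mutually inverse by the uniqueness of the marked basis, and naturality in $A$ follows because $\MFFunctor{U}(\sigma)$ is defined by applying $\sigma$ to coefficients, which is exactly the operation that transports $\psi_M$ along $\sigma: A \to A'$. The main subtlety is to be certain that specializing the unique expression \eqref{eq:unique} really produces the $F^\phi$-normal form of $x_i F^\phi_{\underline{\alpha},k}$; this rests on the Noetherianity and confluence of $\ridast{\mathcal F}$ recalled after Definition~\ref{def:riscrittura}, which guarantee that the tail sum in \eqref{eq:unique} is the unique normal form, hence is preserved under any specialization of the parameters.
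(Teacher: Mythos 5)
The paper does not prove this statement itself---it cites \cite[Theorem 5.1]{ABRS}---but the construction of $\mathscr{U}$ sketched immediately before the theorem is exactly the mechanism your argument runs on, and your proof is correct and is the standard one: evaluation of the parameters $C$ sets up the bijection with $\mathrm{Hom}_{\kk\text{-Alg}}(\kk[C]/\mathscr{U},A)$, with Theorem~\ref{th:caratterizzazione} translating the marked-basis condition into the vanishing of the $Q_{i,\underline{\alpha},k}^{\underline{\eta},\ell}$. You also correctly flag and resolve the only delicate point, namely that the specialization of the generic identity \eqref{eq:unique} is still the unique writing of Proposition~\ref{prop:riscritture} over $A$ (the support/cone conditions persist under specialization, so uniqueness over $A$ applies), which is what justifies reading off the normal form after evaluating the parameters.
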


The following result provides an immediate characterization of those $U$-marked bases whose $U$-resolution  is minimal.

\begin{corollary}\label{cor:closed scheme}
There exists a closed subscheme $\MFSchemeMin$ of  $\MFScheme{U}$ whose $\kk$-points parametrize the modules in $\MFFunctor{U}(\kk)$ that admit a minimal $U$-resolution.
\end{corollary}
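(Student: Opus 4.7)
The plan is to realize $\MFSchemeMin$ directly by adjoining to the ideal $\mathscr{U}$ of Theorem~\ref{thm:rappr} the vanishing of a distinguished finite subset of the universal coefficients $P^{\underline{\gamma},\ell}_{i,\underline{\alpha},k}(C)$ appearing in the unique writing~\eqref{eq:unique}. The guiding observation is Corollary~\ref{cor:minimality-betti-numbers}\emph{(i)} combined with Theorem~\ref{prop:firstSyzForMin}: for $A = \kk$, the $U$-resolution of a module $M = (F) \in \MFFunctor{U}(\kk)$ is minimal if and only if the matrix representing the first syzygy map $\partial_1$ contains no non-zero constant entries. Hence, once I identify which universal coefficients $P^{\underline{\gamma},\ell}_{i,\underline{\alpha},k}(C)$ contribute a scalar (in $x$) entry to $\partial_1$, the corresponding polynomial equations in $\kk[C]$ will cut out $\MFSchemeMin$ inside $\MFScheme{U}$.

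At the universal level, the columns of the matrix of $\partial_1$ are exactly the fundamental syzygies
\[
\msyz{i,\underline{\alpha},k}{}\ =\ \HTemph{x_i\,\ff_{\underline{\alpha},k}}\ -\ \sum_{x^{\underline{\gamma}}\mee{\ell}{}\in\PP{U}} P^{\underline{\gamma},\ell}_{i,\underline{\alpha},k}(C)\,\ff_{\underline{\gamma},\ell},
\]
which form the $U'$-marked basis of $\syz(\mathcal{F})$ by Theorem~\ref{th:fundamental}. By Proposition~\ref{prop:riscritture} and Remark~\ref{rk:sliced pommaret cone}, each $P^{\underline{\gamma},\ell}_{i,\underline{\alpha},k}(C)$ is a homogeneous polynomial in $x_0,\dots,x_n$ of degree $d'_k+1-d'_\ell$ with coefficients in $\kk[C]$. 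In particular, $P^{\underline{\gamma},\ell}_{i,\underline{\alpha},k}(C)$ itself belongs to $\kk[C]$, i.e.\ it contributes a scalar entry of $\partial_1$, precisely when $d'_\ell = d'_k+1$; in all other degree configurations every term in it has strictly positive $x$-degree, so it never produces a constant entry of $\partial_1$ regardless of the specialization of $C$.

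I then set
\[
\mathscr{U}^{\circ} := \mathscr{U}\ +\ \bigl(P^{\underline{\gamma},\ell}_{i,\underline{\alpha},k}(C)\ :\ x^{\underline{\alpha}}\mee{k}{}\in\PP{U},\ x_i>\min(x^{\underline{\alpha}}),\ x^{\underline{\gamma}}\mee{\ell}{}\in\PP{U},\ d'_\ell=d'_k+1\bigr)
\]
in $\kk[C]$ and define $\MFSchemeMin := \mathrm{Spec}(\kk[C]/\mathscr{U}^{\circ})$, which is a closed subscheme of $\MFScheme{U}$ by construction. A $\kk$-point of $\MFSchemeMin$ is a specialization $C\to c\in\kk$ that satisfies $\mathscr{U}$ (so by Theorem~\ref{thm:rappr} the specialized $\mathcal{F}$ is an honest $U$-marked basis) and in addition annihilates every scalar entry of $\partial_1$; by the opening equivalence, this is precisely the condition that the resulting module $M$ have minimal $U$-resolution. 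The only subtlety I expect to need care is the well-definedness of the chosen polynomials $P^{\underline{\gamma},\ell}_{i,\underline{\alpha},k}(C)$ as intrinsic invariants of the universal marked set, independent of reduction choices, but this is guaranteed by the uniqueness statement in Proposition~\ref{prop:riscritture} together with the Noetherianity and confluence of $\xrightarrow{\mathcal{F}^{\ast}}$ already invoked in the construction of $\MFScheme{U}$.
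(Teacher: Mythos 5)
Your proposal is correct and follows essentially the same route as the paper: the paper likewise defines $\MFSchemeMin=\mathrm{Spec}\bigl(\kk[C]/(\mathscr M+\mathscr U)\bigr)$, where $\mathscr M$ is the ideal generated by the scalar (degree-zero in the $x_i$) entries of the universal first-syzygy matrix, and concludes via Theorem~\ref{prop:firstSyzForMin}. Your explicit identification of those entries as the coefficients $P^{\underline{\gamma},\ell}_{i,\underline{\alpha},k}(C)$ with $d'_\ell=d'_k+1$ is just a more concrete description of the same generators.
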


\begin{proof} With the above notation, the vanishing of the polynomials $Q_{i,\underline{\alpha},k}^{\underline\eta,\ell}(C)$ is equivalent to the formation of the fundamental syzygies 
\[
x_i \ff_{\underline{\alpha},k} - \sum_{x^{\underline{\gamma}}\, \mee{\ell}{} \in \PP{U}} P^{\underline{\gamma},\ell}_{i,\underline{\alpha},k} \ff_{\underline\gamma,\ell}
\]
 of $\mathcal F$ in $R^{p_1}(- \dd')$, where $p_1=\vert\PP{U}\vert$. Let $\mathscr M$ be the ideal generated by the non-zero constants appearing as coefficients of terms in $R^{p_1}(- \dd')$ in these syzygies. Then, by Theorem \ref{prop:firstSyzForMin}, the closed subscheme $\MFSchemeMin=\mathrm{Spec}(\kk[C]/(\mathscr M+\mathscr U))$ is contained in $\MFScheme{U}$, and its $\kk$-points parametrize the modules in $\MFFunctor{U}(\kk)$ having a minimal $U$-resolution. %If we consider $A=\kk$ 
Indeed, when $A=\kk$, Theorem~\ref{prop:firstSyzForMin} , implies that a marked basis belonging to $\MFFunctor{U}(\kk)$ yields a minimal $U$-resolution if and only if no non-zero constants appear among the coefficients of the terms in $R^{p_1}(- \dd')$ occurring in the fundamental syzygies.
\end{proof} 

\begin{remark}
~
\begin{itemize}
\item[(i)] Observe that if $A$ is not a field, it may happen that a $U$-marked basis $F$ admits a minimal $U$-resolution even though it does not correspond to a point of the closed subscheme $\MFSchemeMin$ introduced in Corollary \ref{cor:closed scheme}. This situation occurs, for instance, when a non-zero but non-invertible constant element of $A$ appears among the coefficients of the terms in $R^{p_1}(- \dd')$ in a fundamental syzygy of $F$.
\item[(ii)] Recalling the discussion in Remark \ref{rem: oss 1}, Corollary \ref{cor:closed scheme} implies that there exists a closed subscheme of a marked scheme over a quasi-stable ideal $U$ whose $\kk$-points parametrize the componentwise linear ideals in $\MFFunctor{U}(\kk)$. 
\end{itemize}
\end{remark}

We now generalize the notion of $U$-marked functor in order to parametrize, simultaneously, a $U$-marked basis together with its syzygies, or even its complete $U$-resolution. With the above notation, we denote by $U^{(t)} \subseteq R^{p_t}(-\dd^{(t)})$ the stable module such that the $t$-th syzygy module in a $U$-resolution is generated by a $U^{(t)}$-marked basis. By Theorem \ref{th:fundamental} and by the definition of $U$-resolution, such $U^{(t)}$-marked basis is precisely the set $F^{\partial_t}$ of the $t$-th fundamental syzygies of a $U$-marked basis $F$ of a module $M$. For convenience, we set $U^{(0)}=U$ and $F^{\partial_0}=F$.

\begin{definition}\label{def:FuncSyzRes}
Let $U=\bigoplus_{k=1}^m J^{(k)} \ee_k \subseteq R^m(- \dd)$ be a quasi-stable module. We define the {\em $U$-marked syzygy functor} as the covariant functor $\MSyzFunctor{U}: \underline{\text{Noeth~$\kk$-Alg}} \rightarrow \underline{\text{Sets}}$
that associates to any Noetherian $\kk$-algebra $A$ the set
\begin{equation}\label{eq:marked-syz-functor}
\MSyzFunctor{U}(A) :=  \left\{ (M,S_1)\ \middle\vert\ \begin{array}{l} M \oplus \langle\mathcal{N}(U)\rangle = R_A^m(-\dd) \\
S_1\text{~is the syzygy module of $F^{\partial_0}$} \end{array}\right\}.
\end{equation}

Analogously,\hfill we\hfill define\hfill the\hfill {\em $U$-marked\hfill resolution\hfill functor}\hfill as\hfill the\hfill covariant\hfill functor\\ $\MResFunctor{U}: \underline{\text{Noeth~$\kk$-Alg}} \rightarrow \underline{\text{Sets}}$
that associates to any Noetherian $\kk$-algebra $A$ the set
\begin{equation}\label{eq:marked-syz-functor2}
\MResFunctor{U}(A) :=  \left\{ (M,S_1,\ldots,S_t,\ldots, S_{n-D})\ \middle\vert\ \begin{array}{l} M \oplus \langle\mathcal{N}(U)\rangle = R_A^m(-\dd)\\  
S_{t}\text{~is the syzygy module of $F^{\partial_{t-1}}$,}\ \forall\ t > 0 \end{array}\right\}.
\end{equation}
where the integer $D$ is as in Theorem \ref{thm:freeRes}. 

In both cases, the functors associate to any morphism $\sigma:A \to A'$ the corresponding natural map induced by extension of scalars. Notice that, as in case of the marked functor $\MFFunctor{U}$, the image $\sigma(F^{\partial_0})$ is again a $U$-marked basis, and, by construction, each $\sigma(F^{\partial_{t}})$ is the set of fundamental syzygies of $\sigma(F^{\partial_{t-1}})$.
\end{definition}

\begin{proposition}\label{prop: isom first factor}
    Let $U \subseteq R^m(- \dd)$ be a quasi-stable module. Then, the projections onto the first factor $\pi_1:\MSyzFunctor{U} \to \MFFunctor{U}$ and $\pi_1:\MResFunctor{U} \to \MFFunctor{U}$ defined, for every Noetherian $\kk$-algebra $A$, by
    \[
    \begin{array}{ccc} 
    \MSyzFunctor{U}(A) & \xrightarrow{\pi_1(A)} & \MFFunctor{U}(A) \\
    (M,S_1) & \longmapsto & M
    \end{array}
    \qquad
    \begin{array}{ccc} 
    \MResFunctor{U}(A) & \xrightarrow{\pi_1(A)} & \MFFunctor{U}(A) \\
    (M,S_1,\ldots, S_{n-D}) & \longmapsto & M
    \end{array}
    \]
    are natural isomorphisms.
\end{proposition}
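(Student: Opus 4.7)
The plan is to establish the natural isomorphism by showing that, for every Noetherian $\kk$-algebra $A$, the component map $\pi_1(A)$ is a bijection, and that naturality with respect to morphisms $\sigma: A \to A'$ follows automatically from the construction.

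For injectivity of $\pi_1(A)$, I would rely on the uniqueness of the $U$-marked basis $F^{\partial_0}$ generating a module $M \in \MFFunctor{U}(A)$ (noted immediately after Definition \ref{def:MarkedSet}). Since $S_1 = \syz(F^{\partial_0})$ is uniquely determined by the set $F^{\partial_0}$, which in turn is uniquely determined by $M$, there is at most one pair $(M, S_1) \in \MSyzFunctor{U}(A)$ mapping to a given $M$. For surjectivity, I would invoke Theorem \ref{th:fundamental}: for any $M \in \MFFunctor{U}(A)$ with $U$-marked basis $F^{\partial_0}$, the first syzygy module $\syz(F^{\partial_0})$ is generated by the $U'$-marked basis $F^{\partial_1}$ of fundamental syzygies, where $U'$ is the stable module of Remark \ref{rk:syzygy modules stable}. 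Setting $S_1 := \syz(F^{\partial_0})$ thus produces a valid element of $\MSyzFunctor{U}(A)$ whose image under $\pi_1(A)$ is $M$.

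For the resolution functor $\MResFunctor{U}$, I would iterate this argument: since each $U^{(t)}$ is a stable (hence quasi-stable) module, Theorem \ref{th:fundamental} applies to $F^{\partial_t}$ to produce the next syzygy module $S_{t+1}$ generated by the $U^{(t+1)}$-marked basis $F^{\partial_{t+1}}$ of fundamental syzygies, with the process terminating after at most $n - D$ steps by Theorem \ref{thm:freeRes}. Uniqueness of each $S_t$ follows inductively from the uniqueness of marked bases at every step, giving bijectivity of $\pi_1(A)$. Naturality of $\pi_1$ then reduces to the fact that the formation of a $U$-marked basis and the formation of syzygies both commute with the extension of scalars along $\sigma$, exactly as in the verification for $\MFFunctor{U}$ itself. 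There is no real obstacle here: the content is essentially tautological once one unwinds the definitions together with the uniqueness of marked bases and Theorem \ref{th:fundamental}, and the real role of this proposition is to pave the way for the corresponding scheme-theoretic statement in Corollary \ref{cor:isomorfismo}.
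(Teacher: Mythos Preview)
Your proposal is correct and follows essentially the same argument as the paper: both establish bijectivity of $\pi_1(A)$ by invoking the uniqueness of the $U$-marked basis of $M$ together with Theorem~\ref{th:fundamental} to show that the syzygy modules $S_t$ are uniquely determined by $M$, and both note that naturality is immediate from compatibility with extension of scalars. The only cosmetic difference is that you separate injectivity and surjectivity explicitly, whereas the paper bundles them into a single uniqueness-and-existence observation.
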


\begin{proof}
    The projection $\pi_1$ is a natural transformation from $\MSyzFunctor{U}$ (resp.~$\MResFunctor{U}$) to $\MFFunctor{U}$ since, for every morphism $\sigma: A \to A'$, the following diagrams commute:
    \begin{center}
        \begin{tikzpicture}[scale=0.9]
            \node (A) at (0,0) [inner sep=8pt] {$\MSyzFunctor{U}(A)$};
            \node (B) at (4,0) [inner sep=8pt] {$\MSyzFunctor{U}(A')$};
            \node (C) at (0,-2.5) [inner sep=8pt] {$\MFFunctor{U}(A)$};
            \node (D) at (4,-2.5) [inner sep=8pt] {$\MFFunctor{U}(A')$};
            \draw[->] (A) --node[above]{\tiny $\MSyzFunctor{U}(\sigma)$} (B);
            \draw[->] (C) --node[above]{\tiny $\MFFunctor{U}(\sigma)$} (D);
            \draw[->] (A) --node[left]{\tiny $\pi_1(A)$} (C);
            \draw[->] (B) --node[right]{\tiny $\pi_1(A')$} (D);

            \begin{scope}[shift={(9,0)}]
            \node (A) at (0,0) [inner sep=8pt] {$\MResFunctor{U}(A)$};
            \node (B) at (4,0) [inner sep=8pt] {$\MResFunctor{U}(A')$};
            \node (C) at (0,-2.5) [inner sep=8pt] {$\MFFunctor{U}(A)$};
            \node (D) at (4,-2.5) [inner sep=8pt] {$\MFFunctor{U}(A')$};
            \draw[->] (A) --node[above]{\tiny $\MResFunctor{U}(\sigma)$} (B);
            \draw[->] (C) --node[above]{\tiny $\MFFunctor{U}(\sigma)$} (D);
            \draw[->] (A) --node[left]{\tiny $\pi_1(A)$} (C);
            \draw[->] (B) --node[right]{\tiny $\pi_1(A')$} (D);
            \end{scope}
        \end{tikzpicture}
    \end{center}
    
To prove the claim, it is enough to show that, for every Noetherian $\kk$-algebra $A$, the map $\pi_1(A)$ is a bijection between $\MSyzFunctor{U}(A)$ (resp.~$\MResFunctor{U}(A)$) and $\MFFunctor{U}(A)$. 

Indeed, by Theorem \ref{th:fundamental},if $M$ is generated by a $U$-marked basis $F$, then there exists a unique $U^{(1)}$-marked basis $F^{\partial_1}$ generating its first syzygy module $S_1$. By Rrepeating this argument inductively, we obtain that each higher sygyzy module $S_t$ is uniquely determined by $M$. Therefore, $\pi_1(A)$ is bijective, and the naturality of $\pi_1$ implies that it is an isomorphism of functors.
\end{proof}

Thanks to the Yoneda Lemma, the following statement is straightforward.

\begin{corollary}\label{cor:isomorfismo}
    The functors $\MSyzFunctor{U}$ and $\MResFunctor{U}$ are representable, and the representing schemes $\MSyzScheme{U}$ and $\MResScheme{U}$ are isomorphic to $\MFScheme{U}$.
\end{corollary}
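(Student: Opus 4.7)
The plan is to apply the Yoneda Lemma to transfer the natural isomorphisms of functors established in Proposition~\ref{prop: isom first factor} into isomorphisms at the level of representing schemes. Since $\MFFunctor{U}$ is representable by the affine scheme $\MFScheme{U}$ by Theorem~\ref{thm:rappr}, I would first recall that representability is preserved under natural isomorphism of functors: if $F \cong G$ and $G$ is represented by a scheme $X$, then $F$ is also represented by $X$. Combined with Proposition~\ref{prop: isom first factor}, this immediately gives the representability of both $\MSyzFunctor{U}$ and $\MResFunctor{U}$, together with a canonical choice of representing scheme, namely $\MFScheme{U}$ itself.

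For the isomorphism statement, I would invoke Yoneda: composing the natural isomorphism $\pi_1:\MSyzFunctor{U} \xrightarrow{\sim} \MFFunctor{U}$ (resp.\ $\pi_1:\MResFunctor{U} \xrightarrow{\sim} \MFFunctor{U}$) with the natural isomorphism $\MFFunctor{U} \cong h_{\MFScheme{U}} = \mathrm{Hom}(-,\MFScheme{U})$ supplied by Theorem~\ref{thm:rappr}, one obtains natural isomorphisms $\MSyzFunctor{U} \cong h_{\MFScheme{U}}$ and $\MResFunctor{U} \cong h_{\MFScheme{U}}$. Yoneda then guarantees that any schemes $\MSyzScheme{U}$ and $\MResScheme{U}$ representing the two functors are canonically isomorphic to $\MFScheme{U}$, with the isomorphism induced by the natural transformations $\pi_1$. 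The only potential obstacle is checking that the $\pi_1$ of Proposition~\ref{prop: isom first factor} are genuine natural isomorphisms in the categorical sense, but the componentwise bijectivity and the compatibility with extension of scalars were already verified there, so nothing further is needed.
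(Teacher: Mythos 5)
Your proposal is correct and follows exactly the paper's route: the paper also deduces the corollary directly from Proposition~\ref{prop: isom first factor} via the Yoneda Lemma, simply noting that the statement is then straightforward. Your write-up just spells out the standard transfer of representability along a natural isomorphism, which is precisely what the paper leaves implicit.
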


We now focus on determining explicit equations for the representing schemes $\MSyzScheme{U}$ and $\MResScheme{U}$, as well as on describing concretely the isomorphisms with $\MFScheme{U}$.
 
Taking into account \eqref{supporto2}, for every $F_{\underline{\alpha},k}\in \mathcal F$ and every non-multiplicative variable $x_i$ of $x^{\underline{\alpha}}$, we consider the set 
\[
{\sf s}_{i,\underline{\alpha},k}:= \bigcup_{x^{\underline{\gamma}}\,\mathbf{e}_\ell \in \mathcal{P}_U} \left\{x^{\underline{\eta}}\,\ff_{\underline{\gamma},\ell}\ \middle\vert\ x^{\underline{\eta}} \in \mathcal{C}_i(x^{\underline{\gamma}})_{d_{k} - d_{\ell} +1}\right\} \cap \mathcal N(U^{(1)})
\]
and we define the element
\begin{equation}\label{eq:presyz}
x_i \ff_{\underline{\alpha},k}-
\sum_{
\substack{x^{\underline{\eta}} \ff_{\underline{\gamma},\ell} \in {\sf s}_{i,\underline{\alpha},k}
 }}  
\pB{i}{\underline{\alpha}}{k}{\underline{\eta}}{\underline{\gamma}}{\ell} x^{\underline{\eta}} \ff_{\underline{\gamma},\ell}
\end{equation}
in $R^{p_1}_{\kk[B]}(-\dd(1))$, where $\kk[B]$ denotes the polynomial ring whose variables are the parameters in the set $B:= \bigcup B_{i,\underline{\alpha},k}$ with
\[
B_{i,\underline{\alpha},k}:=\left\{ \pB{i}{\underline{\alpha}}{k}{\underline{\eta}}{\underline{\gamma}}{\ell} \ \vert \ x^{\underline{\eta}} \ff_{\underline{\gamma},\ell} \in {\sf s}_{i,\underline{\alpha},k} \right\}.
\]
We call an element of $R^{p_1}_{\kk[B]}(-\dd^{(1)})$ of the form \eqref{eq:presyz} a \emph{fundamental pre-syzygy} of $\mathcal F$, and we denote by $\mathcal F_{\mathrm{pre-syz}}$ the set consisting of all such elements.

Next, in every fundamental pre-syzygy we replace $\ff_{\underline{\alpha},k}$ and $\ff_{\underline{\gamma},\ell}$ by the corresponding elements $F_{\underline{\alpha},k}$ and $F_{\underline{\gamma},\ell}$ of $\mathcal F$, obtaining the following homogeneous element in $R_{\kk[B,C]}^{m}(-\dd)$ 
\begin{equation}\label{eq:sostPreSyz}
x_i F_{\underline{\alpha},k}-
\sum_{
\substack{x^{\underline{\eta}} \ff_{\underline{\gamma},\ell} \in {\sf s}_{i,\underline{\alpha},k}
 }} 
\pB{i}{\underline{\alpha}}{k}{\underline{\eta}}{\underline{\gamma}}{\ell} x^{\underline{\eta}} F_{\underline{\gamma},\ell}.
\end{equation}
We denote by $\mathscr S$ the ideal in the polynomial ring $\kk[B,C]$ generated by the coefficients of the terms in $R^{m}(-\dd)$ occuring in the elements~\eqref{eq:sostPreSyz}. The following result is now almost immediate.

\begin{lemma}\label{lemma:contained}
With the notation above, and with $\mathscr U$ as in Theorem \ref{thm:rappr}, the ideal generated by $\mathscr U$ in the ring $K[B,C]$ is contained in $\mathscr S$.
\end{lemma}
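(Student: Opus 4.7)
The plan is to compare the fundamental pre-syzygy $E_{i,\underline{\alpha},k}$ from~\eqref{eq:sostPreSyz} term-by-term with the unique writing of $x_i F_{\underline{\alpha},k}$ provided by Proposition~\ref{prop:riscritture}, and then to use a Noetherian induction to show that every generator of $\mathscr U$ lies in $\mathscr S$.

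Applying Proposition~\ref{prop:riscritture} to each non-multiplicative multiplication $x_i F_{\underline{\alpha},k}$ yields
\[
x_i F_{\underline{\alpha},k} = \sum_{x^{\underline{\gamma}}\mee{\ell}{} \in \PP{U}} P^{\underline{\gamma},\ell}(C)\, F_{\underline{\gamma},\ell} + \sum_{x^{\underline{\eta}}\mee{\ell}{} \in \mathcal{N}(U)} Q^{\underline{\eta},\ell}_{i,\underline{\alpha},k}(C)\, x^{\underline{\eta}}\mee{\ell}{},
\]
where by Remark~\ref{rk:sliced pommaret cone} the support of each $P^{\underline{\gamma},\ell}\cdot x^{\underline{\gamma}}$ lies in the $i$-sliced Pommaret cone $\mathcal{C}_i(x^{\underline{\gamma}})$. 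Writing $P^{\underline{\gamma},\ell}(C) = \sum p^{\underline{\eta},\underline{\gamma},\ell}(C)\, x^{\underline{\eta}}$, every monomial $x^{\underline{\eta}}$ that appears satisfies $\max(x^{\underline{\eta}}) \leqslant \min\{x_{i-1}, \min(x^{\underline{\gamma}})\}$. Combined with the description of $U^{(1)}$ in Remark~\ref{rk:syzygy modules stable}, this forces $x^{\underline{\eta}}\ff_{\underline{\gamma},\ell} \in \mathcal{N}(U^{(1)})$, so that $(\underline{\eta},\underline{\gamma},\ell)$ indexes one of the parameters $\pB{i}{\underline{\alpha}}{k}{\underline{\eta}}{\underline{\gamma}}{\ell}$ in~\eqref{eq:presyz}. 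Subtracting from~\eqref{eq:sostPreSyz} then yields the identity
\[
E_{i,\underline{\alpha},k}(B,C) = \sum_{\mathcal{N}(U)} Q^{\underline{\eta},\ell}_{i,\underline{\alpha},k}(C)\, x^{\underline{\eta}}\mee{\ell}{} + \sum_{(\underline{\eta},\underline{\gamma},\ell) \in {\sf s}_{i,\underline{\alpha},k}} G^{\underline{\eta},\underline{\gamma},\ell}\cdot x^{\underline{\eta}} F_{\underline{\gamma},\ell}
\]
in $R^m_{\kk[B,C]}(-\dd)$, where $G^{\underline{\eta},\underline{\gamma},\ell} := p^{\underline{\eta},\underline{\gamma},\ell}(C) - \pB{i}{\underline{\alpha}}{k}{\underline{\eta}}{\underline{\gamma}}{\ell}$.

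Extracting the $\kk[B,C]$-coefficient of each term $x^{\underline{\zeta}}\mee{k'}{}$ from this identity produces two families of relations: for $x^{\underline{\zeta}}\mee{k'}{} \in \mathcal{N}(U)$ the corresponding generator $e^{\underline{\zeta},k'}(B,C)$ of $\mathscr S$ equals $Q^{\underline{\zeta},k'}_{i,\underline{\alpha},k}(C)$ plus a $\kk[C]$-linear combination of the $G^{\underline{\eta},\underline{\gamma},\ell}$; for $x^{\underline{\zeta}}\mee{k'}{} \in U$ the generator $e^{\underline{\zeta},k'}(B,C) \in \mathscr S$ is itself such a $\kk[C]$-linear combination. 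Consequently, to establish $Q^{\underline{\zeta},k'}_{i,\underline{\alpha},k}(C) \in \mathscr S$ it suffices to show that each $G^{\underline{\eta},\underline{\gamma},\ell}$ lies in $\mathscr S$.

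The main obstacle is this final step, which I would handle by Noetherian induction exploiting the disjointness of the Pommaret cone decomposition of $U$. For each triple $(\underline{\eta_0},\underline{\gamma_0},\ell_0) \in {\sf s}_{i,\underline{\alpha},k}$, the head term $x^{\underline{\eta_0}+\underline{\gamma_0}}\mee{\ell_0}{}$ lies in $U$ and belongs to a unique Pommaret cone, so $(\underline{\eta_0},\underline{\gamma_0},\ell_0)$ is the only triple in ${\sf s}_{i,\underline{\alpha},k}$ whose head product equals $x^{\underline{\eta_0}+\underline{\gamma_0}}\mee{\ell_0}{}$. The identity for $e^{\underline{\eta_0}+\underline{\gamma_0},\ell_0}(B,C) \in \mathscr S$ then isolates $G^{\underline{\eta_0},\underline{\gamma_0},\ell_0}$ modulo a $\kk[C]$-linear combination of $G$'s indexed by triples whose heads coincide with tail contributions of the $F_{\underline{\gamma},\ell}$; these indexing triples are strictly smaller in the well-founded order underlying the confluent reduction $\cridast{\mathcal F}$ of Definition~\ref{def:riscrittura}, whose Noetherianity is guaranteed by \cite[Proposition~4.11]{ABRS}. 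Completing the induction yields $G^{\underline{\eta},\underline{\gamma},\ell} \in \mathscr S$ for every admissible index, and hence $\mathscr U\cdot\kk[B,C] \subseteq \mathscr S$, as required.
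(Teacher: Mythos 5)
Your argument is correct in substance, but it re-derives by hand what the paper obtains in two lines, and the final inductive step is stated with the order reversed. The paper's proof simply works over the quotient ring $A=\kk[B,C]/\mathscr S$: there the elements \eqref{eq:sostPreSyz} vanish by definition of $\mathscr S$, so $x_i F_{\underline{\alpha},k}=\sum \pB{i}{\underline{\alpha}}{k}{\underline{\eta}}{\underline{\gamma}}{\ell}\, x^{\underline{\eta}} F_{\underline{\gamma},\ell}$ is a writing of the form \eqref{eq:every unique writing} (the supports lie in the Pommaret cones) with zero remainder; by the uniqueness in Proposition \ref{prop:riscritture} it must coincide with \eqref{eq:unique} read modulo $\mathscr S$, hence every $Q_{i,\underline{\alpha},k}^{\underline\eta,\ell}(C)$ vanishes in $A$ and $\mathscr U\cdot\kk[B,C]\subseteq\mathscr S$. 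Your coefficient extraction and induction are exactly an unwinding of this appeal to uniqueness; the relations $G^{\underline{\eta},\underline{\gamma},\ell}\in\mathscr S$ you establish along the way are the content of the ideal $\mathscr B$ in the proof of Corollary \ref{cor:eliminazione}, so you prove more than the lemma requires. Two remarks on your last step: the disjointness of the Pommaret cones does isolate $G^{\underline{\eta}_0,\underline{\gamma}_0,\ell_0}$ as you claim, but the other triples $T'$ entering that relation --- those for which $x^{\underline{\eta}_0+\underline{\gamma}_0}\ee_{\ell_0}$ occurs among the tail terms of $x^{\underline{\eta}'}F_{\underline{\gamma}',\ell'}$ --- have head products that are strictly \emph{larger}, not smaller, in the order making $\ridast{\mathcal F}$ Noetherian (a reduction step replaces a head term by strictly smaller ones, so a tail contribution of $T'$ lying in $U$ sits below the head product of $T'$). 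As written, your induction hypothesis therefore does not cover them. Since the index set ${\sf s}_{i,\underline{\alpha},k}$ is finite, the argument is repaired by inducting downward, starting from the triples whose head product is not a tail contribution of any other, but this should be said explicitly.
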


\begin{proof}
It is enough to observe that in the quotient ring $\kk[B,C]/\mathscr S$ the polynomials generating $\mathscr U$ vanish, since the rewriting \eqref{eq:unique} is unique.
\end{proof}

Given a $U$-marked set $F=\{\mf{\underline{\alpha}}{k}\}\subset R^m_A(-\dd)$, we consider the evaluation morphism $\phi_{F}:\kk[B,C]\rightarrow A$ defined as follows. For every $a\in \kk$, we set $\phi_{F}(a)=a$. For every $\pC{\underline{\alpha}}{k}{\underline{\eta}}{\ell}\in C$, we let $\phi_{F}(\pC{\underline{\alpha}}{k}{\underline{\eta}}{\ell})$ be the coefficient of the term $x^{\underline{\eta}}\mee{\ell}{}$ in $\mf{\underline{\alpha}}{k}$. Now, for every $x_{\underline{\alpha}}{\bf e}_k\in \PP{U}$ and for every non-multiplicative variable $x_i$ of $x^{\underline {\alpha}}$, we consider the element
\[
x_i f_{\underline{\alpha},k} - \sum_{x^{\underline{\gamma}}\, \mee{\ell}{} \in \PP{U}} P^{\underline{\gamma},\ell}_{i,\underline{\alpha},k}(\phi_F(C)) f_{\underline\gamma,\ell}
\]
which is obtained from \eqref{eq:unique} by replacing each $F_{\underline{\gamma},\ell}$ with the corresponding $f_{\underline{\gamma},\ell}$. 
Then the morphism $\phi_{F}$ associates to each parameter $\pB{i}{\underline{\alpha}}{k}{\underline{\eta}}{\underline{\gamma}}{\ell}$ the coefficient 
of $x^{\underline{\eta}} f_{\underline{\gamma},\ell}$ in the above expression. We extend $\phi_{F}$ to $\kk[B,C]$ in the natural way. 

Finally, we extend $\phi_{F}$ to $R_{\kk[C]}^m(-\dd)$ (resp.~to $R_{\kk[B,C]}^{p_1}(-\dd^{(1)})$) by letting $\phi_{F}(f)$ act only on the coefficients of $f$, which belong to $\kk[C]$ (resp.~to $\kk[B,C]$).

\begin{theorem}\label{thm:reprFuncSyz}
The functor $\MSyzFunctor{U}$ is represented by the scheme $\MSyzScheme{U}:=\Spec(\kk[B,C]/\mathscr{S})$.
\end{theorem}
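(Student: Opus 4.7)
The plan is to apply Yoneda's Lemma: I will establish, for every Noetherian $\kk$-algebra $A$, a bijection between $\MSyzFunctor{U}(A)$ and $\mathrm{Hom}_{\kk\text{-alg}}(\kk[B,C]/\mathscr{S}, A)$, and then verify that the bijection is natural in $A$. This mirrors the structure of the proof of Theorem \ref{thm:rappr}, with $\mathscr{S}$ playing the role of $\mathscr{U}$, and with the additional feature that the $B$-parameters must jointly record the syzygies.

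For the forward direction, starting from $(M,S_1)\in \MSyzFunctor{U}(A)$, let $F=\{f_{\underline\alpha,k}\}$ be the $U$-marked basis of $M$ and consider the evaluation morphism $\phi_F:\kk[B,C]\to A$ defined just before the statement. The assignment $\phi_F(\pC{\underline{\alpha}}{k}{\underline{\eta}}{\ell})$ gives the coefficients of $F$, while $\phi_F(\pB{i}{\underline{\alpha}}{k}{\underline{\eta}}{\underline{\gamma}}{\ell})$ gives the coefficients appearing in the unique rewriting \eqref{eq:unique} (with $C$ specialized via $\phi_F$). I need to check that $\phi_F$ kills $\mathscr{S}$: substituting each $F_{\underline{\gamma},\ell}$ by $f_{\underline{\gamma},\ell}$ in \eqref{eq:sostPreSyz} and applying $\phi_F$ to the $B$-parameters reproduces the relation $x_i f_{\underline\alpha,k} - \sum P^{\underline\gamma,\ell}_{i,\underline\alpha,k}(\phi_F(C))\, f_{\underline\gamma,\ell}$, which vanishes in $R_A^m(-\dd)$ because $F$ is a $U$-marked basis (Theorem \ref{th:caratterizzazione}). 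Therefore $\phi_F$ descends to $\kk[B,C]/\mathscr{S}$.

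For the backward direction, given $\phi\in \mathrm{Hom}_{\kk\text{-alg}}(\kk[B,C]/\mathscr{S},A)$, define $F_\phi:=\{\phi(F_{\underline\alpha,k})\}$ by applying $\phi$ to the coefficients of the generic marked elements \eqref{eq:marked set}. By Lemma \ref{lemma:contained}, $\mathscr{U}\subseteq \mathscr{S}$, so $\phi$ factors through $\kk[C]/\mathscr{U}$ when restricted to $\kk[C]$, and Theorem \ref{thm:rappr} guarantees that $F_\phi$ is a $U$-marked basis of the module $M_\phi:=(F_\phi)$. Applying $\phi$ to the fundamental pre-syzygies \eqref{eq:presyz} and using $\phi(\mathscr{S})=0$ via the identity \eqref{eq:sostPreSyz} shows that the specialized pre-syzygies are honest elements of $\mathrm{Syz}(F_\phi)$; by the uniqueness of the expression in \eqref{eq:unique} (Proposition \ref{prop:riscritture}) they must coincide with the fundamental syzygies $F_\phi^{\partial_1}$, which by Theorem \ref{th:fundamental} form the $U^{(1)}$-marked basis generating $\mathrm{Syz}(F_\phi)$. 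Hence the pair $(M_\phi, \mathrm{Syz}(F_\phi))$ lies in $\MSyzFunctor{U}(A)$.

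The two constructions are mutually inverse: on the $C$-parameters this is exactly the bijection underlying Theorem \ref{thm:rappr}, while on the $B$-parameters both the uniqueness in Proposition \ref{prop:riscritture} and the determination of $F^{\partial_1}$ by $F$ (Theorem \ref{th:fundamental}) ensure that the values of $B$ are forced once $F$ is fixed. Naturality in $A$ is straightforward because the specializations of $B$ and $C$ commute with any ring homomorphism $\sigma:A\to A'$, and the extension of $\sigma$ to polynomial rings preserves both marked bases and their fundamental syzygies. The only subtle step is verifying that the specialized pre-syzygies really produce the fundamental syzygies and not some other syzygies; this is controlled entirely by the uniqueness of the Pommaret-type rewriting, which is why $\mathscr{S}$ contains precisely the right relations. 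I expect this uniqueness check, together with its compatibility with the $\kk[C]$-equations cut out by $\mathscr U$, to be the main technical point to articulate carefully.
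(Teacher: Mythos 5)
Your proof is correct and follows essentially the same route as the paper's: both arguments reduce the representability claim to showing that the evaluation morphism $\phi_F$ kills $\mathscr{S}$ exactly when $F$ is a $U$-marked basis with $F^{\partial_1}=\phi_F(\mathcal{F}_{\mathrm{pre\textnormal{-}syz}})$, using Lemma \ref{lemma:contained}, Theorem \ref{th:caratterizzazione}, and the uniqueness of the rewriting \eqref{eq:unique}. If anything, your treatment of the backward direction is slightly more explicit than the paper's, since you handle an arbitrary homomorphism $\phi$ and argue that the uniqueness of \eqref{eq:unique} forces its $B$-values to agree with those of $\phi_{F_\phi}$, a point the paper leaves implicit by working only with morphisms of the form $\phi_F$.
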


\begin{proof} 
It is enough to prove that every $U$-marked set $F$ is a $U$-marked basis and that $F^{\partial_1}=\phi_{F}(\mathcal F_{\mathrm{pre-syz}})$ if and only if $\phi_{F}$ factors through $ \kk[B,C]/\mathscr S$, or in other words if and only if the following diagram commutes:
\[
\xymatrix{
\kk[B,C]  \ar[rr]^{\phi_{F}} \ar[dr]& &A\\
& \kk[B,C]/\mathscr S\ar[ru]}\,.
\]
Equivalently, we must show that $F$ is a $U$-marked basis if and only if $\ker(\phi_{F})$ contains $\mathscr S$.

If $F$ is a $U$-marked basis, then the polynomials corresponding to those of \eqref{eq:sostPreSyz} via $\phi_{F}$ vanish identically, hence $\mathscr S\subseteq \ker(\phi_{F})$. 

Now, suppose that $F$ is a $U$-marked set and that $\mathscr S\subseteq \ker(\phi_{F})$. 
Applying $\phi_{F}$ to the polynomials in \eqref{eq:sostPreSyz}, we obtain zero polynomials; therefore, these yeild the fundamental syzygies of the marked set $F$. By Lemma \ref{lemma:contained} and Theorem \ref{th:caratterizzazione}, it follows that $F$ is indeed a $U$-marked basis.
\end{proof}

\begin{corollary}\label{cor:eliminazione}
With the above notation, there exists a projection morphism $\mathbb{A}^{\vert C \cup B\vert}\to \mathbb{A}^{\vert C\vert}$ whose restriction to $\MSyzScheme{U}$ induces an isomorphism $\MSyzScheme{U} \simeq \MFScheme{U}$ and in particular, $\mathscr S\cap \kk[C]=\mathscr U$.
\end{corollary}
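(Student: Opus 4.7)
The plan is to identify the projection morphism with the natural isomorphism $\pi_1$ from Proposition \ref{prop: isom first factor} on $A$-points, and then translate this identification into a ring-theoretic statement. First, I would consider the projection $\pi: \mathbb{A}^{\vert C \cup B\vert} \to \mathbb{A}^{\vert C\vert}$ that forgets the $B$-coordinates; at the ring level, it corresponds to the coordinate inclusion $\kk[C] \hookrightarrow \kk[B,C]$. By Lemma \ref{lemma:contained}, $\mathscr U \subseteq \mathscr S$, so the composition $\kk[C] \hookrightarrow \kk[B,C] \twoheadrightarrow \kk[B,C]/\mathscr S$ factors through $\kk[C]/\mathscr U$, yielding a well-defined $\kk$-algebra homomorphism
\[
\iota: \kk[C]/\mathscr U \longrightarrow \kk[B,C]/\mathscr S.
\]
Applying $\Spec$ gives a morphism of schemes $p := \Spec(\iota): \MSyzScheme{U} \to \MFScheme{U}$, which is precisely the restriction of $\pi$ to $\MSyzScheme{U}$.

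Next, I would verify that $p$ coincides with the natural transformation $\pi_1$. On $A$-points, an element $\phi \in \mathrm{Hom}_{\kk}(\kk[B,C]/\mathscr S,\,A)$ corresponds, by Theorem \ref{thm:reprFuncSyz}, to a pair $(M,\,S_1)$ in $\MSyzFunctor{U}(A)$, where the $U$-marked basis $F$ generating $M$ is determined by $\phi(C)$ and the $U^{(1)}$-marked basis $F^{\partial_1}$ generating $S_1$ is determined by $\phi(B)$. The composition $\phi \circ \iota$ merely restricts $\phi$ to $\kk[C]/\mathscr U$ and so recovers the $U$-marked basis $F$ alone, i.e.~the module $M$. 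This is precisely $\pi_1(A)(M,\,S_1) = M$ from Proposition \ref{prop: isom first factor}.

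Since $\pi_1$ is a natural isomorphism of functors (Proposition \ref{prop: isom first factor}), Yoneda's lemma (already invoked in Corollary \ref{cor:isomorfismo}) implies that $p$ is an isomorphism of schemes and hence that $\iota$ is an isomorphism of $\kk$-algebras. The identity $\mathscr S \cap \kk[C] = \mathscr U$ then follows immediately: by construction, $\mathscr S \cap \kk[C]$ is the kernel of the composition $\kk[C] \hookrightarrow \kk[B,C] \twoheadrightarrow \kk[B,C]/\mathscr S$, the inclusion $\mathscr U \subseteq \mathscr S \cap \kk[C]$ is Lemma \ref{lemma:contained}, and the reverse inclusion is forced by the injectivity of $\iota$.

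The main subtlety is the second step, namely matching the ring-theoretic projection with the functorial $\pi_1$. This requires carefully unwinding the bijection from Theorem \ref{thm:reprFuncSyz} to ensure that the $C$-coordinates of an $A$-point parametrize the marked basis $F$ in exactly the same way as in the representation of $\MFScheme{U}$ provided by Theorem \ref{thm:rappr}, so that restriction of scalars via $\iota$ corresponds precisely to the set-theoretic forgetting map $(M,S_1) \mapsto M$.
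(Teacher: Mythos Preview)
Your proposal is correct, and the identification of the coordinate projection with the functorial $\pi_1$ is exactly the right idea; once that is in place, Yoneda gives the isomorphism and the ideal equality follows from injectivity of $\iota$, just as you say.

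The paper takes a more constructive route. Rather than invoking the abstract isomorphism from Proposition~\ref{prop: isom first factor} directly, it uses the rewriting procedure $\xrightarrow{\mathcal F^{\ast}}_\star$ to produce explicit polynomials $b_{i,\underline{\alpha},k}^{\underline{\eta},\underline{\gamma},\ell} \in \kk[C]$ expressing each $B$-parameter in terms of the $C$-parameters, and then shows the stronger equality $\mathscr S = \mathscr U\cdot\kk[B,C] + \mathscr B$, where $\mathscr B$ is generated by the differences $B_{i,\underline{\alpha},k}^{\underline{\eta},\underline{\gamma},\ell} - b_{i,\underline{\alpha},k}^{\underline{\eta},\underline{\gamma},\ell}$. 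This explicit description is what makes the corollary useful for the computations in the appendix (expressing syzygy coefficients polynomially in the marked-basis coefficients), whereas your argument, while cleaner, only certifies that such expressions exist. Both proofs ultimately appeal to Yoneda, but the paper's version carries the concrete elimination data along for the ride.
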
 

\begin{proof} 
Since $\mathcal F$ is a $U$-marked set, the rewriting procedure $\xrightarrow{\mathcal F^{\ast}}_\star$ provides unique polynomials $ b_{i,\underline{\alpha},k}^{\underline{\eta},\underline{\gamma},\ell} \in \kk[C]$ such that 
\[
x_i F_{\underline{\alpha},k} = \sum_{
\substack{x^{\underline{\eta}} \ff_{\underline{\gamma},\ell} \in {\sf s}_{i,\underline{\alpha},k}
 }} 
b_{i,\underline{\alpha},k}^{\underline{\eta},\underline{\gamma},\ell} x^{\underline{\eta}} F_{\underline{\gamma},\ell} +G_{\underline{\alpha},k}^i,
\]
where $G_{\underline{\alpha},k}^i=0$ in $\kk[C]/\mathscr U$. 

Let $\mathscr B$ denote the ideal generated in $\kk[B,C]$ by the polynomials $B_{i,\underline{\alpha},k}^{\underline{\eta},\underline{\gamma},\ell}-b_{i,\underline{\alpha},k}^{\underline{\eta},\underline{\gamma},\ell}$. Then, in the quotient ring $\kk[B,C]/\mathscr S$, these generators vanish because the rewriting above is unique. Hence, $\mathscr U \cdot K[B,C] +\mathscr B \subseteq \mathscr S$ thanks also to Lemma~\ref{lemma:contained}.
 
Arguing as in the proof of Theorem \ref{thm:reprFuncSyz}, we deduce that $\Spec(K[B,C]/(\mathscr U \cdot K[B,C] +\mathscr B )$ represents $\MSyzFunctor{U}{}$, and by Yoneda Lemma this scheme is isomorphic to $\Spec(\kk[B,C]/\mathscr S)$. 
Therefore, we conclude that $\mathscr U \cdot K[B,C] +\mathscr B= \mathscr S$, and hence $\mathscr S\cap \kk[C]=\mathscr U$.

This isomorphism is induced by the projection $\kk[B,C]\rightarrow\kk[C]/\mathscr U$ that sends each $b_{i,\underline{\alpha},k}^{\underline{\eta},\underline{\gamma},\ell}$ to its corresponding $B_{i,\underline{\alpha},k}^{\underline{\eta},\underline{\gamma},\ell}$ and fixes the parameters $C$. The kernel of this projection coincides with the ideal $\mathscr U \cdot K[B,C] +\mathscr B$, yielding the claimed isomorphism $\MSyzScheme{U}\simeq\MFScheme{U}$.
\end{proof}

It is now useful to give a more explicit interpretation of Theorem \ref{thm:reprFuncSyz} in terms of matrix products. Let us consider the following matrices:  
\begin{itemize}
    \item $M_{0}$ is the $m \times p_1 $ matrix whose columns are the elements of $\mathcal F\subseteq R_A^m(-\dd)$; 
    \item $M_{1}$ is the $p_1 \times \vert \PP{U^{(1)}}\vert$ matrix whose columns are the elements of $\mathcal F_{\mathrm{pre-syz}}\subseteq R_A^{p_1}(-\dd^{(1)})$.
\end{itemize}
\begin{corollary}\label{cor:prodotto matrici}
With the notation above, $\mathcal F$ is a $U$-marked basis and $\mathcal F_{\mathrm{pre-syz}}$ is the corresponding set of fundamental syzygies if and only if $M_{0} M_{1}=0$. 
\end{corollary}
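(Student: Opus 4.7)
The plan is to recognize the matrix product $M_0 M_1$ as a compact encoding of the expressions \eqref{eq:sostPreSyz}, and then reduce the statement to Theorem~\ref{thm:reprFuncSyz}. The argument is essentially a translation between two languages (matrix identities versus generators of an ideal), so no substantial new content is needed beyond what has already been established.

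First, I would interpret $M_0$ as representing the $R_A$-module homomorphism $\partial_0 : R_A^{p_1}(-\dd^{(1)}) \to R_A^m(-\dd)$ defined on the basis by $\ff_{\underline{\alpha},k} \mapsto F_{\underline{\alpha},k}$. Likewise, the matrix $M_1$ represents a homomorphism from $R_A^{|\PP{U^{(1)}}|}$ into $R_A^{p_1}(-\dd^{(1)})$ whose image on basis vectors gives the fundamental pre-syzygies listed in $\mathcal F_{\mathrm{pre-syz}}$. With this reading, the $j$-th column of $M_0 M_1$ is exactly the image under $\partial_0$ of the $j$-th column of $M_1$; in particular, if the $j$-th column of $M_1$ corresponds to the triple $(i,\underline{\alpha},k)$, then the $j$-th column of $M_0 M_1$ equals the element of $R_A^m(-\dd)$ appearing in \eqref{eq:sostPreSyz}, namely
\[
x_i F_{\underline{\alpha},k} - \sum_{x^{\underline{\eta}}\ff_{\underline{\gamma},\ell} \in {\sf s}_{i,\underline{\alpha},k}} \pB{i}{\underline{\alpha}}{k}{\underline{\eta}}{\underline{\gamma}}{\ell}\, x^{\underline{\eta}} F_{\underline{\gamma},\ell}.
\]

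Second, I would observe that $M_0 M_1 = 0$ as a matrix over $R_A$ is equivalent to the simultaneous vanishing of every such expression in $R_A^m(-\dd)$. Since these elements are homogeneous and their supports lie in $\mathcal{N}(U) \cup U \cap \T^m$, vanishing is in turn equivalent to the vanishing of all coefficients of terms, which are precisely the polynomials in $\kk[B,C]$ generating the ideal~$\mathscr S$. Hence $M_0 M_1 = 0$ is equivalent to the requirement that the evaluation $\phi_F : \kk[B,C] \to A$ (assigning to the $C$-parameters the coefficients of the polynomials of $\mathcal F$ and to the $B$-parameters the corresponding coefficients in $\mathcal F_{\mathrm{pre-syz}}$) factors through $\kk[B,C]/\mathscr S$.

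Finally, I would invoke Theorem~\ref{thm:reprFuncSyz}: by the representability of $\MSyzFunctor{U}$ via $\MSyzScheme{U} = \Spec(\kk[B,C]/\mathscr S)$, a datum $(\mathcal F, \mathcal F_{\mathrm{pre-syz}})$ yields a point of $\MSyzScheme{U}$ precisely when $\mathcal F$ is a $U$-marked basis and $\mathcal F_{\mathrm{pre-syz}}$ is its set of fundamental syzygies; this is exactly the factorization through $\kk[B,C]/\mathscr S$ characterized in the previous step. Combining the two equivalences gives the corollary. No genuine obstacle is expected, as the whole proof is a direct unpacking of the matrix product and an appeal to the already-proved representability theorem; the only point requiring a line of care is the verification that the entries of $M_0 M_1$ reproduce \eqref{eq:sostPreSyz} column by column, which is immediate from the definition of the matrices and of $\partial_0$.
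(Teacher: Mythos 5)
Your proposal is correct and follows essentially the same route as the paper: identify the columns of $M_0M_1$ with the expressions \eqref{eq:sostPreSyz}, so that $M_0M_1=0$ is equivalent to the vanishing of the generators of $\mathscr S$ under $\phi_F$, and then conclude via the equivalence established in (the proof of) Theorem~\ref{thm:reprFuncSyz}. The paper's proof is just a terser version of the same argument, so no further comment is needed.
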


\begin{proof}
By construction, the entries of the matrix product $M_{0}  M_{1}$ are precisely the coefficients of the terms appearing in the polynomials of~\eqref{eq:sostPreSyz}. Hence, the condition $M_{0} M_{1} = 0$ is equivalent to the vanishing of all such polynomials, which is exactly the requirement that $\mathcal F_{\mathrm{pre-syz}}$ consists of the fundamental syzygies of the $U$-marked basis $\mathcal F$.
\end{proof}

\begin{example}
%Osservazione che sizigie fondamentali sono strettamente contenute in basi marcate 
Consider the stable ideal $J=(x_2,x_1^2)\subseteq \kk[x_0,x_1,x_2]$, which defines a complete intersection corresponding to a point of the Hilbert scheme $\mathrm{Hilb}^2(\mathbb P^2)$. The $J$-resolution is minimal
\[
0 \to R_A(-3) \overset{\partial^J_1}{\longrightarrow} R_A(-1)\oplus R_A(-2) {\buildrel \partial^J_0\over\longrightarrow} J \to 0
\]
where the matrix of $\partial^J_0$ is $M^J_0=\left[\begin{array}{cc} x_2 &  x_1^2\end{array}\right]$ and the matrix of $\partial^J_1$ is $M^J_1=\left[\begin{array}{c} -x_1^2 \\  x_2\end{array}\right]$. Given the $J$-marked set $\mathcal F=\{F_1= \HTemph{x_2}-c_1x_1-c_2x_0, F_2= \HTemph{x_1^2}-c_3x_1x_0-c_4x_0^2\}$, we construct $\mathcal F_{\mathrm{pre-syz}}$ and then the matrices
\[
M_0=\left[\begin{array}{cc} F_1 &  F_2\end{array}\right], \quad M_1=\left[\begin{array}{c}  -b_1x_1^2-b_2x_1x_0-b_3x_0^2 \\   \HTemph{x_2}-b_4x_1-b_5x_0\end{array}\right]
\]
according to Corollary \ref{cor:suppSyz}. Here $\PP{U}=\{x_2,x_1^2\}$, $\PP{U^{(1)}}=\{x_2 {\bf f}_2\}$ and $\mathcal N(U^{(1)})=\T{\bf f}_1\cup\{x_0^{\alpha_0}x_1^{\alpha_1} : (\alpha_0,\alpha_1)\in \mathbb Z_{\geqslant 0}^2 \}{\bf f}_2$. 

Since $\mathcal F$ is a $J$-marked basis, we have $\MFScheme{U}=\mathbb A^4$. Moreover, note that $\MFScheme{U^{(1)}}=\mathbb A^8$, but $\MSyzScheme{U}$ is isomorphic to $\MFScheme{U}$ by Corollary \ref{cor:isomorfismo}; hence $U^{(1)}$-marked bases are not necessarily composed of the fundamental syzygies of a $U$-marked basis. Further, by Corollary~\ref{cor:prodotto matrici}, we obtain $b_1=-1, b_2=-c_3, b_3=-c_4, b_4=c_1, b_5=c_2$. 
\end{example}

Following the previous notation, for each $t>0$ let $M_{t}$ be the matrix whose columns are the pre-syzygies of $\mathcal F^{\partial_{t-1}}$, with the set of parameters $B^{(t)}$ defined analogously to $B$. Observe that, by Theorem \ref{th:fundamental}, the condition $M_{t-1}M_t=0$ implies the condition $M_tM_{t+1}=0$.

For every $t> 0$, let $\mathscr S^{(t)}$ be the ideal generated by the coefficients of the terms in $\mathbb T$ appearing in the entries of the product of matrices $M_{t-1}M_{t}$. In particular, we have $\mathscr S^{(1)}=\mathscr S$ and $B^{(1)}=B$.

\begin{proposition}\label{prop:elimSyzn}
The functor $\MResFunctor{U}$ is represented by the scheme 
\[
\MResScheme{U}:=\mathrm{Spec}\left(\kk\left[C,B^{(1)},\dots,B^{(n-D)}\right]/\mathscr S^{(1)}+\dots+\mathscr S^{(n-D)}\right),
\]
where $D$ is as in Theorem \ref{thm:freeRes}.
An explicit isomorphism between $\MResScheme{U}$ and $\MFScheme{U}$ is obtained by recursively eliminating the parameters in each $B^{(t)}$, i.e., $\mathscr U=(\mathscr S^{(1)}+\dots +\mathscr S^{(n-D)})\cap \kk[C]$. 
\end{proposition}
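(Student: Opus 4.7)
The plan is to iterate the arguments of Theorem~\ref{thm:reprFuncSyz} and Corollary~\ref{cor:eliminazione}, using Theorem~\ref{th:fundamental} to guarantee that the construction can be carried out at every level of the $U$-resolution.

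First, I describe the matrices $M_t$ more carefully. Set $M_0$ as before, with columns the elements of $\mathcal F$ and coefficients in $\kk[C]$. For $t \geqslant 1$, assuming inductively that $M_{t-1}$ has already been constructed with coefficients in $\kk[C, B^{(1)}, \ldots, B^{(t-1)}]$, let $M_t$ be the matrix whose columns are the fundamental pre-syzygies of the columns of $M_{t-1}$, built as in \eqref{eq:presyz} with a fresh set $B^{(t)}$ of indeterminates for the undetermined coefficients. Then $\mathscr S^{(t)} \subset \kk[C, B^{(1)}, \ldots, B^{(t)}]$ is the ideal generated by the coefficients of the terms appearing in the entries of $M_{t-1} M_t$. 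By construction $\mathscr S^{(1)} = \mathscr S$, and, arguing exactly as in Corollary~\ref{cor:prodotto matrici}, imposing the vanishing $\mathscr S^{(t)} = 0$ amounts to forcing the columns of $M_t$ to be the fundamental syzygies of the columns of $M_{t-1}$.

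Second, I would prove representability by mimicking the proof of Theorem~\ref{thm:reprFuncSyz}. Given a tuple $(M, S_1, \ldots, S_{n-D}) \in \MResFunctor{U}(A)$ with bases $F = F^{\partial_0}$ of $M$ and $F^{\partial_t}$ of $S_t$, define the evaluation morphism
\[
\phi: \kk\bigl[C, B^{(1)}, \ldots, B^{(n-D)}\bigr] \longrightarrow A
\]
sending each $C$-parameter to the corresponding coefficient in $F$ and each $B^{(t)}$-parameter to the corresponding coefficient in $F^{\partial_t}$. The factorization of $\phi$ through the quotient by $\mathscr S^{(1)} + \cdots + \mathscr S^{(n-D)}$ is equivalent to the vanishing of all products $M_{t-1} M_t$ after evaluation, which by Theorems~\ref{th:caratterizzazione} and~\ref{th:fundamental} translates exactly into $F$ being a $U$-marked basis and each $F^{\partial_t}$ being precisely the set of fundamental syzygies of $F^{\partial_{t-1}}$; this is the definition of $\MResFunctor{U}(A)$.

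Third, I would derive the explicit isomorphism with $\MFScheme{U}$ by recursive elimination, working top-down. Apply Corollary~\ref{cor:eliminazione} at level $n-D$: since $\mathcal F^{\partial_{n-D-1}}$ is a marked basis over a stable module (Theorem~\ref{th:fundamental}), its fundamental syzygies are uniquely determined via the rewriting procedure, so every parameter in $B^{(n-D)}$ is congruent modulo $\mathscr S^{(n-D)}$ to a polynomial in $C, B^{(1)}, \ldots, B^{(n-D-1)}$. This eliminates $B^{(n-D)}$. Iterating the argument from $t = n-D$ down to $t = 1$ yields the identity
\[
\mathscr U = \bigl(\mathscr S^{(1)} + \cdots + \mathscr S^{(n-D)}\bigr) \cap \kk[C],
\]
and the projection $\kk[C, B^{(1)}, \ldots, B^{(n-D)}] \to \kk[C]$ that forgets all the $B^{(t)}$ induces the stated isomorphism $\MResScheme{U} \simeq \MFScheme{U}$; this isomorphism is also predicted abstractly by Corollary~\ref{cor:isomorfismo}, and the present construction makes it computationally explicit. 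The main obstacle is bookkeeping: tracking the parameter sets $B^{(t)}$, the rings they live in, and verifying that the descending chain of eliminations really terminates with the ideal $\mathscr U$ of Theorem~\ref{thm:rappr}. Structurally, however, every step is parallel to the single-level case treated in Corollary~\ref{cor:eliminazione}, thanks to Theorem~\ref{th:fundamental}.
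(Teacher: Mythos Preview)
Your proposal is correct and follows essentially the same approach as the paper: the paper's proof simply says to extend the argument of Theorem~\ref{thm:reprFuncSyz} to all steps of the $U$-resolution for representability, and to eliminate each $B^{(t)}$ analogously to Corollary~\ref{cor:eliminazione} for the isomorphism, which is precisely what you spell out in detail. One small imprecision: when you invoke Theorem~\ref{th:fundamental} to justify the elimination at level $t$, what is really needed is only that the columns of $M_{t-1}$ form a marked \emph{set} over a stable module, so that the rewriting procedure of Proposition~\ref{prop:riscritture} applies and gives the unique polynomials expressing the $B^{(t)}$-parameters; the marked-basis property is only established after the equations are imposed.
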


\begin{proof}
For representability, it suffices to extend the arguments of the proof of Theorem \ref{thm:reprFuncSyz} to all steps of the $U$-resolution. For the isomorphism, for each $t>0$, the parameters in $B^{(t)}$ can be eliminated analogously to the elimination of the parameters in $B$ as carried out in the proof of Corollary \ref{cor:isomorfismo}. 
\end{proof}

\begin{example}\label{ex:funtori vari}
Consider the stable ideal $J=(x_3,x_2^2,x_2x_1)\subseteq  K[x_0,x_1,x_2,x_3]$. The $J$-resolution is
\[
0 \to R_A(-4) {\buildrel \partial^J_2\over\longrightarrow} R_A^3(-3) \overset{\partial^J_1}{\longrightarrow} R_A(-1)\oplus R_A(-2)^2 {\buildrel \partial^J_0\over\longrightarrow} J \to 0
\]
with matrices 
\[
M^J_0=\left[\begin{array}{ccc} \HTemph{x_3} & \HTemph{ x_2^2} & \HTemph{x_2x_1}\end{array}\right],\quad M^J_1=\left[\begin{array}{ccc} 0 & -x_2x_1 & -x_2^2\\ -x_1 & 0 & \HTemph{x_3} \\\HTemph{ x_2}  & \HTemph{x_3} & 0\end{array}\right],\quad M^J_2=\left[\begin{array}{c} \HTemph{x_3} \\ -x_2 \\ x_1\end{array}\right].
\]
Given the $J$-marked set $\mathcal F=\{F_1= \HTemph{x_3}-c_1x_2-c_2x_1-c_3x_0,\ F_2= \HTemph{x_2^2}- c_4x_1^2-c_5x_2x_0-c_6x_1x_0-c_7x_0^2,\ F_3= \HTemph{x_1x_2}-c_8x_1^2-
c_9x_2x_0-c_{10}x_1x_0-c_{11}x_0^2\}$, we construct $\mathcal F_{\mathrm{pre-syz}}$ and the matrices 
\[
\begin{split}
&M_0=\left[\begin{array}{ccc} f_1 &  f_2 & f_3\end{array}\right],\qquad M_2=\left[\begin{array}{c}  \HTemph{x_3}-d_{1}x_2-d_{2}x_1-d_{3}x_0 \\ -d_{4}x_2-d_{5}x_1-d_{6}x_0 \\ 
-d_{7}x_2-d_{8}x_1-d_{9}x_0\end{array}\right]\\
&M_1=\left[\begin{array}{ccc} -b_1x_1^2-b_2x_1x_0-b_{3}x_0^2 & -g_2 & -g_3\\ 
-b_{16}x_1-b_{17}x_0 & -b_{18}x_2-b_{19}x_1-b_{20}x_0 &  \HTemph{x_3}-b_{21}x_2-b_{22}x_1-b_{23}x_0 \\ 
 \HTemph{x_2}-b_{24}x_1-b_{25}x_0  &  \HTemph{x_3}-b_{26}x_1-b_{27}x_0 & -b_{28}x_1-b_{29}x_0 \end{array}\right]\\
\end{split}
\]
with $g_2=b_{4}x_2^2 + b_{5} x_2x_1+ b_{6}x_1^2+b_{7}x_2x_0+b_{8}x_1x_0+b_{9}x_0^2$ and $g_3=b_{10}x_2^2 + b_{11} x_2x_1+ b_{12}x_1^2+b_{13}x_2x_0+b_{14}x_1x_0+b_{15}x_0^2$, 
according to Corollary \ref{cor:suppSyz}. Here, $\PP{U}=\{x_3,x_2^2,x_2x_1\}$, $\PP{U^{(1)}}=\{x_3 {\bf f}_2, x_3{\bf f}_3, x_2{\bf f}_3\}$ and $\mathcal N(U^{(1)})=\T{\bf f}_1 \cup \{x_0^{\alpha_0}x_1^{\alpha_1}x_2^{\alpha_2} : (\alpha_0,\alpha_1,\alpha_2)\in \mathbb Z_{\geqslant 0}^3 \}{\bf f}_2 \cup \{x_0^{\alpha_0}x_1^{\alpha_1} : (\alpha_0,\alpha_1)\in \mathbb Z_{\geqslant 0}^2 \}{\bf f}_3$, and $\PP{U^{(2)}}=\{x_3 {\bf f}_{x_3,1}\}$.

Imposing the conditions $M_0M_1=0$ yields the generators of the ideal $\mathscr S^{(1)}$ defining the scheme $\MSyzScheme{U}$, and imposing $M_1M_2=0$ yields the generators of the ideal $\mathscr S^{(2)}$, so that $\mathscr S^{(1)}+ \mathscr S^{(2)}$ defines $\MResScheme{U}$.
\end{example}

\subsection{The projection onto the second factor }
\label{subsec:projection onto the second factor}

Up to now, we have observed that the parameters in $B,\ldots,B^{(t)},\ldots$, can be expressed in terms of the parameters in the set $C$ via an elimination procedure, which provides explicit isomorphisms between $\MSyzScheme{U}$ and $\MFScheme{U}$ and between $\MResScheme{U}$ and $\MFScheme{U}$. 

In the following, we exploit a key result from \cite{BE1974} to show that, under the assumptions that the quasi-stable module $U$ is a quasi-stable ideal $J$ and that $\textnormal{depth}(J)\geqslant 2$, the parameters in $C$ can also be expressed in terms of those in $B$. 

\begin{lemma}\label{lemma:depth}
Let $\mathcal I\subseteq (\kk[C]/\mathscr U)[X]$ be the ideal generated by the $J$-marked basis $\mathcal F$ over $\mathcal R= (\kk[C]/\mathscr U)[X]$. Then, $\textnormal{depth}(J)\leqslant \textnormal{depth}(\mathcal I)$.
\end{lemma}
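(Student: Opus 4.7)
The plan is to exhibit an explicit $\mathcal R/\mathcal I$-regular sequence of length equal to $\textnormal{depth}(R/J)$, and then transfer the depth bound from $\mathcal R/\mathcal I$ to $\mathcal I$ via the standard short exact sequence.

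Set $D := \min\{i : x_i = \min(x^{\underline{\alpha}})$ for some $x^{\underline{\alpha}} \in \PP{J}\}$, so that $x_0,\ldots,x_{D-1}$ are the common multiplicative variables of all Pommaret generators. For a quasi-stable ideal $J$ it is well known (Seiler) that these variables form a maximal $R/J$-regular sequence, giving $\textnormal{depth}(R/J) = D$ and hence, via $0 \to J \to R \to R/J \to 0$, $\textnormal{depth}(J) = D+1$. It therefore suffices to show that $\textnormal{depth}(\mathcal R/\mathcal I) \geqslant D$.

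The key combinatorial fact I would prove is that $\mathcal N(J)$ is stable under multiplication by $x_0,\ldots,x_{D-1}$: given $x^{\underline{\beta}} \in \mathcal N(J)$ and $i<D$, if $x_i x^{\underline{\beta}} \in \mathcal{C}(x^{\underline{\gamma}})$ for some $x^{\underline{\gamma}} \in \PP{J}$, write $x_i x^{\underline{\beta}} = x^{\underline{\delta}} x^{\underline{\gamma}}$ with $\max(x^{\underline{\delta}}) \leqslant \min(x^{\underline{\gamma}})$. Since $\min(x^{\underline{\gamma}}) \geqslant x_D > x_i$, the variable $x_i$ cannot divide $x^{\underline{\gamma}}$, so $x_i \mid x^{\underline{\delta}}$; but then $(x^{\underline{\delta}}/x_i) x^{\underline{\gamma}} \in \mathcal{C}(x^{\underline{\gamma}}) \subseteq J$ yields $x^{\underline{\beta}} \in J$, a contradiction.

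Now the marked basis property says that $\mathcal R/\mathcal I$ is a free $A$-module ($A = \kk[C]/\mathscr U$) with basis $\mathcal N(J)$. Combining this with the combinatorial stability, for every $i<D$ the multiplication by $x_i$ on $\mathcal R/\mathcal I$ sends the basis element $x^{\underline{\beta}}$ to the basis element $x_i x^{\underline{\beta}}$ \emph{without any reduction} modulo $\mathcal F$, hence is injective. Arguing inductively, the quotient $(\mathcal R/\mathcal I)/(x_0,\ldots,x_{k-1})(\mathcal R/\mathcal I)$ is free as $A$-module on the subset $\mathcal N(J) \cap \kk[x_k,\ldots,x_n]$ of the original basis, and for $k<D$ multiplication by $x_k$ is again a bijection from this basis to a subset of itself, so $x_k$ is a non-zero-divisor on the quotient. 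Therefore $x_0,\ldots,x_{D-1}$ is an $\mathcal R/\mathcal I$-regular sequence of length $D$, so $\textnormal{depth}(\mathcal R/\mathcal I) \geqslant D$. The depth lemma applied to $0 \to \mathcal I \to \mathcal R \to \mathcal R/\mathcal I \to 0$, together with $\textnormal{depth}(\mathcal R) \geqslant n+1 \geqslant D+1$, finally gives $\textnormal{depth}(\mathcal I) \geqslant D+1 = \textnormal{depth}(J)$.

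The delicate step is controlling what happens after quotienting by the partial regular sequence: I must check that $(\mathcal R/\mathcal I)/(x_0,\ldots,x_{k-1})$ retains a clean free $A$-module structure adapted to the restricted sous-escalier $\mathcal N(J)\cap \kk[x_k,\ldots,x_n]$. This is exactly what the stability of $\mathcal N(J)$ under multiplication by the common multiplicative variables provides, so once the combinatorial lemma is in hand the rest of the argument is essentially formal.
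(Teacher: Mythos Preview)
Your proof is correct but follows a genuinely different route from the paper's. The paper dispatches the lemma in two lines via the Auslander--Buchsbaum formula: both $J$ and $\mathcal I$ admit a $J$-resolution, so Corollary~\ref{cor:maggiorazioni}\emph{(iii)} gives $\mathrm{pdim}(\mathcal I)\leqslant \mathrm{pdim}(J)$, and since $\textnormal{depth}(M)+\mathrm{pdim}(M)=\textnormal{depth}(\mathcal R)$ for any module with finite free resolution, the inequality $\textnormal{depth}(J)\leqslant\textnormal{depth}(\mathcal I)$ follows immediately. Your approach instead works directly with the quotient $\mathcal R/\mathcal I$, exploiting the combinatorial fact that the common multiplicative variables $x_0,\ldots,x_{D-1}$ preserve $\mathcal N(J)$ to exhibit an explicit regular sequence of the right length, and then transfers the bound to $\mathcal I$ by the depth lemma. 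The paper's argument is shorter and leverages already-established machinery (the $U$-resolution and its length), while yours is more self-contained and has the side benefit of pinpointing a concrete regular sequence---essentially reproving, over the non-field base $A=\kk[C]/\mathscr U$, Seiler's identification of depth with the number of globally multiplicative variables. Both arguments are entirely valid; yours avoids Auslander--Buchsbaum at the cost of a bit more bookkeeping with the free $A$-module structure of the successive quotients.
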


\begin{proof}Recall that the depth of an $\mathcal R$-module $M$ is the maximal length of a regular sequence in $\mathcal R$ consisting of elements of $M$ (see, e.g., \cite[Section 18.1]{E}). Moreover, by the Auslander- Buchsbaum formula, if $M$ admits a finite free resolution, we have $\textnormal{depth}(M) + \textnormal{pdim}(M) =  \textnormal{depth}(\mathcal R)$.   Both $J$ and $\mathcal I$ admit a $J$-resolution, so it follows from Corollary \ref{cor:maggiorazioni} that $\textnormal{depth}(J)\leqslant \textnormal{depth}(\mathcal I)$.
\end{proof}

\begin{definition}\label{def:second factor}
Let $U=\bigoplus_{k=1}^m J^{(k)} \ee_k \subseteq R^m(- \dd)$ be a quasi-stable module. We define the {\em $U$-marked second projection functor} as the covariant functor $\MSyzFunctorDue{U}: \underline{\text{Noeth~$\kk$-Alg}} \rightarrow \underline{\text{Sets}}$
that associates to any Noetherian $\kk$-algebra $A$ the set
\begin{equation}\label{eq:marked-syzDue-functor}
\MSyzFunctorDue{U}(A) :=  \left\{ S_1\ \middle\vert\ \begin{array}{l} M \oplus \langle\mathcal{N}(U)\rangle = R_A^m(-\dd) \\
S_1\text{~is the syzygy module of $F^{\partial_0}$} \end{array}\right\}.
\end{equation}
This functor associates to any morphism $\sigma:A \to A'$ the natural map induced by extension of scalars. As with the other functors, $\sigma(F^{\partial_{1}})$ is the set of fundamental syzygies of $\sigma(F^{\partial_{0}})$.
\end{definition}

\begin{theorem}\label{th:representation second factor}
With the above notation, the functor $\MSyzFunctorDue{U}$ is represented by the scheme $\MSyzSchemeDue{U}:=\Spec(\kk[B]/\mathscr{S}\cap \kk[B])$.
\end{theorem}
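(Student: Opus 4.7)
The plan is to adapt the representability argument of Theorem~\ref{thm:reprFuncSyz}, projecting the construction onto the $B$-parameters. For each Noetherian $\kk$-algebra $A$ and each $S_1 \in \MSyzFunctorDue{U}(A)$, I would define an evaluation morphism $\psi_{S_1}: \kk[B] \to A$ sending each parameter $\pB{i}{\underline{\alpha}}{k}{\underline{\eta}}{\underline{\gamma}}{\ell}$ to the coefficient of $x^{\underline{\eta}}\ff_{\underline{\gamma},\ell}$ in the unique fundamental syzygy $\msyz{i,\underline{\alpha},k}{}$ that generates $S_1$ (uniqueness guaranteed by Theorem~\ref{th:fundamental}). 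Since $S_1$ is by hypothesis the syzygy module of some $F^{\partial_0}$ with $M \in \MFFunctor{U}(A)$, Theorem~\ref{thm:reprFuncSyz} supplies a morphism $\phi_{(M,S_1)}: \kk[B,C]/\mathscr{S}\to A$ whose restriction to $\kk[B]$ coincides with $\psi_{S_1}$; consequently $\psi_{S_1}$ kills $\mathscr{S}\cap\kk[B]$ and descends to $\bar{\psi}_{S_1}: \kk[B]/(\mathscr{S}\cap\kk[B]) \to A$.

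The assignment $\eta_A: S_1 \mapsto \bar{\psi}_{S_1}$ is natural in $A$ (by the compatibility of evaluations under extension of scalars), defining a natural transformation $\eta$ from $\MSyzFunctorDue{U}$ to $\mathrm{Hom}_{\kk\text{-Alg}}(\kk[B]/(\mathscr{S}\cap\kk[B]),-)$. Injectivity of each $\eta_A$ is immediate from the uniqueness of the $U^{(1)}$-marked basis generating $S_1$ (Theorem~\ref{th:fundamental}): distinct syzygy modules yield distinct marked bases, hence distinct coefficient patterns, and so distinct $\bar{\psi}_{S_1}$.

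The principal step, and the main obstacle, is the surjectivity of $\eta_A$. Given $\bar{\psi}: \kk[B]/(\mathscr{S}\cap\kk[B]) \to A$, my plan is to lift $\bar{\psi}$ to some $\phi:\kk[B,C]/\mathscr{S}\to A$; Theorem~\ref{thm:reprFuncSyz} will then yield a pair $(M,S_1)\in\MSyzFunctor{U}(A)$ whose projection $\pi_2$ provides the desired $S_1$. The lift should be constructed using the explicit presentation $\mathscr{S}=\mathscr{U}\cdot\kk[B,C]+\mathscr{B}$ established in the proof of Corollary~\ref{cor:eliminazione}, where $\mathscr{B}$ is generated by the relations $\pB{i}{\underline{\alpha}}{k}{\underline{\eta}}{\underline{\gamma}}{\ell}-b_{i,\underline{\alpha},k}^{\underline{\eta},\underline{\gamma},\ell}(C)$: under the resulting isomorphism $\kk[B,C]/\mathscr{S}\simeq\kk[C]/\mathscr{U}$, the quotient $\kk[B]/(\mathscr{S}\cap\kk[B])$ is identified with the $\kk$-subalgebra of $\kk[C]/\mathscr{U}$ generated by the polynomials $b_{i,\underline{\alpha},k}^{\underline{\eta},\underline{\gamma},\ell}(C)$, so the lifting reduces to extending a $\kk$-algebra morphism defined on this subalgebra to the whole ring $\kk[C]/\mathscr{U}$. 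Producing such an extension, i.e.\ choosing specializations of the $C$-parameters simultaneously compatible with $\mathscr{U}$ and with the prescribed values of the $b$-polynomials, is the technical heart of the argument.
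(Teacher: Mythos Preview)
Your framework through injectivity is correct and matches the paper's. The gap is exactly where you place it, and your proposed route to surjectivity cannot be completed as stated. Reducing the lift of $\bar\psi$ to ``extend a $\kk$-algebra morphism from the subalgebra of $\kk[C]/\mathscr U$ generated by the $b_{i,\underline\alpha,k}^{\underline\eta,\underline\gamma,\ell}(C)$ to all of $\kk[C]/\mathscr U$'' is a genuine obstruction, not a technicality: morphisms out of subalgebras need not extend (take $\kk[t^2]\subset\kk[t]$ and a target $A$ lacking square roots), and nothing in your outline supplies the missing $C$-data. That this obstruction is real in the present context is precisely what Examples~\ref{ex:no inj} and~\ref{ex:no inj 2} later witness---the map $\MSyzScheme{U}\to\MSyzSchemeDue{U}$ can fail to be an isomorphism---and is why Theorem~\ref{th:isomorphism 2} needs a depth hypothesis together with the Buchsbaum--Eisenbud structure theorem to invert the projection. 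Your lifting step is asking for exactly that inverse on $A$-points, without any such hypothesis.

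The paper organises the argument differently so as to avoid your extension problem. It never starts from an abstract $\bar\psi:\kk[B]/(\mathscr S\cap\kk[B])\to A$; instead it works throughout with the restriction $\phi'_F=\phi_F\vert_{\kk[B]}$ attached to a $U$-marked \emph{set} $F$, so that the $C$-values are present from the outset and no after-the-fact lifting is required. The converse direction then becomes: if $F$ is a $U$-marked set and $\phi'_F$ annihilates $\mathscr S\cap\kk[B]$, then $F$ is already a $U$-marked basis (via Lemma~\ref{lemma:contained} and Theorem~\ref{th:caratterizzazione}), hence $\phi'_F$ arises from an element of $\MSyzFunctorDue{U}(A)$. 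The essential difference is that the paper keeps a candidate $F$---and with it a candidate lift---in hand at every step, whereas your plan discards $F$ and then attempts to reconstruct it from the $B$-data alone.
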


\begin{proof}
Arguing as in the proof of Theorem \ref{thm:reprFuncSyz} and considering the restriction $\phi'_F$ of the evaluation homomorphism $\phi_F$ to $\kk[B]$, it suffices to show that a $U^{(1)}$-marked set is the $U^{(1)}$-marked basis $F^{\partial_1}=\phi_{F}(\mathcal F_{\mathrm{pre-syz}})$ of the first syzygy module of a $U$-marked basis $F$ if and only if $\phi_{F}$ factors through $\kk[B]/(\mathscr S\cap \kk[B])$, i.e., if and only if the following diagram commutes:
\[
\xymatrix{
\kk[B]  \ar[rr]^{\phi_{F}} \ar[dr]& &A\\
& \kk[B]/(\mathscr S\cap \kk[B])\ar[ru]}\,.
\]
Equivalently, we need to prove that $\phi'_{F}(\mathcal F_{\mathrm{pre-syz}})$ coincides with $F^{\partial_1}$ for a $U$-marked basis $F$ if and only if $\ker(\phi'_{F})$ contains $\mathscr S\cap \kk[B]$, since $\ker(\phi'_F)=\ker(\phi_F)\cap \kk[B]$.

If $F^{\partial_1}=\phi_{F}(\mathcal F_{\mathrm{pre-syz}})$ for a $U$-marked basis $F$, then $\mathscr S \subseteq \ker(\phi_F)$, by Theorem \ref{thm:reprFuncSyz}, and hence $\mathscr S \cap \kk[B] \subseteq \ker(\phi_F)\cap \kk[B]=\ker(\phi'_F)$.

Suppose now that $F$ is a $U$-marked set and that $\mathscr S\cap \kk[B]\subseteq \ker(\phi'_{F})$. 
Applying $\phi'_{F}$ to the elements in \eqref{eq:presyz} produces the fundamental syzygies of the marked set $F$, which in turn ensures that $F$ is a $U$-marked basis by Lemma \ref{lemma:contained} and Theorem~\ref{th:caratterizzazione}. 
\end{proof}

Like for the projections onto the first factor, the projections onto the second factor $\pi_2:\MSyzFunctor{U} \to \MSyzFunctorDue{U}$ and $\pi_2:\MResFunctor{U} \to \MSyzFunctorDue{U}$ defined for every $\kk$-algebra $A$ by
    \[
    \begin{array}{ccc} 
    \MSyzFunctor{U}(A) & \xrightarrow{\pi_1(A)} & \MSyzFunctorDue{U}(A) \\
    (M,S_1) & \longmapsto & S_1
    \end{array}
    \qquad
    \begin{array}{ccc} 
    \MResFunctor{U}(A) & \xrightarrow{\pi_1(A)} & \MSyzFunctorDue{U}(A) \\
    (M,\ldots,S_t,\ldots) & \longmapsto & S_1
    \end{array}
    \]
are natural transformations from $\MSyzFunctor{U}$ (resp.~$\MResFunctor{U}$) to $\MSyzFunctorDue{U}$.

However, unlike the projections on the first factor (see Proposition \ref{prop: isom first factor}), these natural transformations are not necessarily isomorphisms, as the following examples illustrate.

\begin{example}\label{ex:no inj}
Consider the quasi-stable ideal $J = (x_2,x_1^2)$ in $\kk[x_0,x_1,x_2]$ with $x_2>x_1>x_0$, and the $J$-marked basis $F= \{ \HTemph{x_2}-x_0, \HTemph{x_1^2} - x_0^2\}$. The depth of $J$ is $2$. The module of the first syzygies of $F$ is generated by the $U^{(1)}$-marked basis consisting of the single syzygy $(-x_1^2 + x_0^2)\ff_{x_2}+ \HTemph{x_2\ff_{x_1^2}} - x_0\ff_{x_1^2}$.

Now consider the quasi-stable ideal $J'= x_2J = (x_2^2,x_2x_1^2)$, which has depth $1$, and the two $J'$-marked bases $F'=\{  \HTemph{x_2^2}-x_2x_0,  \HTemph{x_2x_1^2} - x_2x_0^2\}$ and $F''= \{(x_2+x_0)(x_2-x_0), (x_2+x_0)(x_1^2 - x_0^2)\} = \{ \HTemph{x_2^2} - x_0^2,  \HTemph{x_2x_1^2} - x_2x_0^2 + x_1^2x_0 - x_0^3\}$. It is immediate to verify that $F'^{\partial_1}=F''^{\partial_1}=F^{\partial_1}$. Hence, for $J'$-marked bases, the projection onto the second factor is not injective.
\end{example} 

\begin{example}\label{ex:no inj 2}Let $\kk$ be a field of characteristic 0.
For the quasi-stable ideal $J:=(x_3^2,x_3x_2)\subseteq \kk[x_0,\dots,x_3]$ with depth $1$, we compute the ideals $\mathscr U$ and $\mathscr S$ in $\kk[C]$ and $\kk[B,C]$ with $\vert C\vert=16$ and $\vert B\vert=6$, respectively. The Krull-dimension of both $\kk[C]/\mathscr U$ and $\kk[B,C]/\mathscr S$ is $7$. However, the Krull-dimension of $\kk[B]/(\mathscr S\cap \kk[B])$ is $6$, showing that $\kk[B,C]/\mathscr S$ and $\kk[B]/(\mathscr S\cap \kk[B])$ are not isomorphic. 
\end{example}

From now on, we consider a quasi-stable ideal $J\subseteq R_A$ of depth $\geqslant 2$.

Following the notation in \cite{BE1974}, if $P$ and $Q$ are free modules of finite rank over a commutative ring $R$, then by choosing bases for $P$ and $Q$ we can identify a map $f: P\to Q$ with a matrix, and denote by $I(f)$ the ideal generated by the non-zero minors of maximal size, i.e., the rank of $f$. By $P^\star$ we denote $\textnormal{Hom}_R(P,R)$.

\begin{theorem}\label{th:isomorphism 2}
Let $J\subseteq R_A$ be a quasi-stable ideal of depth $\geqslant 2$, and let $\mathcal I$ the ideal defining the universal family of the marked functor $\MFFunctor{J}$. Then, for every parameter $c$ in the set $C$, there exists a polynomial $q_c\in \kk[B]$ such that $c=q_c$ in the ring $\kk[B,C]/\mathscr S$, and the projection onto the second factor $\pi_2:\MSyzFunctor{J} \to \MSyzFunctorDue{J}$ is an isomorphism. In particular, $\kk[B,C]/\mathscr S$ and $\kk[B]/(\mathscr S\cap \kk[B])$ are isomorphic.
\end{theorem}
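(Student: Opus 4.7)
My plan is to deduce the polynomial expressions $q_c$ from the stronger statement that the natural transformation $\pi_2:\MSyzFunctor{J}\to\MSyzFunctorDue{J}$ is an isomorphism of functors; once this is established, Yoneda combined with Theorems~\ref{thm:reprFuncSyz} and \ref{th:representation second factor} identifies the representing rings $\kk[B,C]/\mathscr S$ and $\kk[B]/(\mathscr S\cap \kk[B])$, so every class $[c]$ with $c \in C$ lies in the image of $\kk[B]$ and hence equals some polynomial $q_c$ modulo $\mathscr S$. Surjectivity of $\pi_2(A)$ is tautological from the definition of $\MSyzFunctorDue{J}(A)$, so the real content is injectivity.

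The input from \cite{BE1974} I intend to use is the classical depth-$2$ consequence $\textnormal{Hom}_R(I,R) \cong R$ (acting by multiplication), valid for every ideal $I$ of depth at least $2$ in a Noetherian ring $R$: it follows from the long exact $\textnormal{Ext}$-sequence attached to $0 \to I \to R \to R/I \to 0$ together with the vanishing of $\textnormal{Hom}(R/I,R)$ and $\textnormal{Ext}^1(R/I,R)$ guaranteed by the depth hypothesis. By Lemma~\ref{lemma:depth}, this applies to the universal ideal $\mathcal I$ over $\mathcal R$, and I plan to extend its conclusion to every base ring $R_A$ appearing in the argument below.

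Given two $J$-marked bases $F=\{f_{\underline{\alpha}}\}$ and $F'=\{f'_{\underline{\alpha}}\}$ over a Noetherian $\kk$-algebra $A$ with the same fundamental first syzygies $F^{\partial_1}=F'^{\partial_1}$, the two presentations $R_A^{p_1}\twoheadrightarrow (F)$ and $R_A^{p_1}\twoheadrightarrow (F')$ defined by $\ff_{\underline{\alpha}}\mapsto f_{\underline{\alpha}}$ and $\ff_{\underline{\alpha}}\mapsto f'_{\underline{\alpha}}$ share the same kernel, and therefore induce isomorphisms from $R_A^{p_1}/\langle F^{\partial_1}\rangle$ onto $(F)$ and onto $(F')$ respectively. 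Combining these produces an $R_A$-module homomorphism $(F)\to R_A$ sending $f_{\underline{\alpha}}\mapsto f'_{\underline{\alpha}}$, which by the depth consequence is multiplication by some $r \in R_A$. Homogeneity of $f_{\underline{\alpha}}$ and $f'_{\underline{\alpha}}$ in the same degree forces $r$ to live in degree $0$, so $r \in A$; comparing the coefficient of the head term $x^{\underline{\alpha}}$ in the relation $f'_{\underline{\alpha}}=r\,f_{\underline{\alpha}}$ (equal to $1_A$ on both sides, by the marked-basis normalization) forces $r=1_A$. Hence $F=F'$, completing the injectivity step.

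The main obstacle I expect is the transfer step: Lemma~\ref{lemma:depth} provides the depth-$2$ condition at the universal level, whereas the argument above applies the multiplication consequence at the level of the arbitrary base ring $R_A$. To justify this I plan either to combine the specialization of the universal $J$-resolution from Theorem~\ref{thm:freeRes} with Auslander--Buchsbaum over $R_A$, recovering enough depth for $(F)$, or, if that route proves delicate, to perform the whole comparison universally by introducing a fresh copy $\widetilde C$ of $C$ and working over $\kk[B,C,\widetilde C]/(\mathscr S,\widetilde{\mathscr S})$ with $\widetilde{\mathscr S}$ the obvious analogue of $\mathscr S$; the depth argument applied to the universal pair will then force $\widetilde C \equiv C$ in that ring, producing the polynomials $q_c$ directly without ever leaving the universal ring.
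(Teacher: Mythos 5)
Your strategy is sound and genuinely different from the paper's. The paper works entirely at the universal level: it applies the Buchsbaum--Eisenbud structure theorems (\cite[Theorem 3.1]{BE1974} and \cite[Corollary 5.2]{BE1974}) to the universal $J$-resolution over $(\kk[B,C]/\mathscr S)[X]$, obtaining $\mathcal I = h\, I(a_2)$ with $h$ a unit (by the depth hypothesis) and $a_2$ built recursively from the maps $f_2,\ldots,f_n$, which are written purely in the $B$-parameters; this \emph{constructs} the polynomials $q_c$ and then checks that the resulting map $\Phi$ inverts $\Phi'_F$. You instead prove pointwise injectivity of $\pi_2(A)$ using the elementary consequence $\textnormal{Hom}_{R_A}(I,R_A)\cong R_A$ of grade $\geqslant 2$, and deduce the $q_c$ existentially via Yoneda. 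Your injectivity argument is correct as far as it goes: same syzygy module $\Rightarrow$ the two presentations share a kernel $\Rightarrow$ the induced map $(F)\to(F')$ is multiplication by some $r$, and the head-term normalization of marked elements forces $r=1_A$. Example~\ref{ex:no inj} confirms that the depth hypothesis enters exactly where you use it. What you lose relative to the paper is effectivity: the paper's route actually produces the $q_c$ (which is what the appendix exploits computationally), whereas yours only shows they exist.

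The one genuine gap is the transfer step you flag yourself: you need $\textnormal{grade}_{R_A}\big((F)\big)\geqslant 2$ (equivalently, vanishing of $\textnormal{Hom}(R_A/(F),R_A)$ and $\textnormal{Ext}^1(R_A/(F),R_A)$) for \emph{every} Noetherian $\kk$-algebra $A$ and every $(F)\in\MFFunctor{J}(A)$, not just for the universal ideal. Your first proposed fix (specialize the $J$-resolution and invoke Auslander--Buchsbaum over $R_A$) is exactly the argument of Lemma~\ref{lemma:depth} run over $R_A$ instead of $\mathcal R$, so it is on the same footing as the paper's own treatment: $(F)$ admits a $J$-resolution of length $\leqslant \mathrm{pdim}(J)\leqslant n-1$ by Corollary~\ref{cor:maggiorazioni}, whence the grade bound. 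But note that Auslander--Buchsbaum in the form ``depth $+$ pdim $=$ depth of the ring'' is stated for local or graded-over-a-field rings, and $A[x_0,\dots,x_n]$ with $A$ an arbitrary Noetherian $\kk$-algebra is neither; if you want to avoid that subtlety (which the paper's Lemma~\ref{lemma:depth} also glosses over), your second fallback is the cleaner one: double the parameters, work over $\kk\big[B,C,\widetilde C\big]/\big(\mathscr S(B,C)+\mathscr S(B,\widetilde C)\big)$, and run the $\textnormal{Hom}(I,R)\cong R$ comparison once, universally, where Lemma~\ref{lemma:depth} applies directly to both universal ideals. With either fix written out, your proof is complete.
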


\begin{proof}
The\hfill ideal\hfill $\mathcal I\subseteq (\kk[C]/\mathscr U)[X]$\hfill is\hfill  generated\hfill by\hfill the\hfill $J$-marked\hfill basis\hfill $\mathcal F$\hfill over\hfill the\hfill ring\\ $(\kk[C]/\mathscr U)[X]\simeq (\kk[B,C]/\mathscr S)[X]$. By Lemma \ref{lemma:depth} and by the hypothesis, we have $\textnormal{depth}(\mathcal I)\geqslant \textnormal{depth}(J)\geqslant 2$. 

Since $\mathcal F$ is a $J$-marked basis, the ideal $\mathcal I$ admits a $J$-resolution, which is a particular finite free resolution over $(\kk[B,C]/\mathscr S)[X]$. For convenience, we write it as
\[
0 \to P_n \xrightarrow{f_n} P_{n-1} \xrightarrow{f_{n-1}} \dots \xrightarrow{f_2} P_1 \xrightarrow{f_1} P_0=(\kk[B,C]/\mathscr S)[X].
\]
The homomorphism $f_1$ is represented by the matrix $M_0$, written in terms of the parameters $C$ modulo the ideal $\mathscr U$, and $f_2$ is the homomorphism described by the matrix $M_1$, which can be expressed in terms of both $C$ and $B$, but even solely in terms of the parameters $B$ modulo the ideal $\mathscr S\cap \kk[B]$, thanks to Theorem \ref{th:representation second factor}. Similarly, for every $t>1$, the matrix $M_t$ can be described in terms of the parameters $B$, modulo $\mathscr S\cap \kk[B]$.

By \cite[Theorem 3.1]{BE1974}, for each integer $k$ such that $1\leqslant k < n$, there exists a unique homomorphism $a_k: (\kk[B,C]/\mathscr S)[X] \to \wedge^{r_k} P_{k-1}=\wedge^{r_k-1} P^\star_{k-1}$, where $r_k$ is the rank of $f_k$, such that the following diagrams commutes:
\[
\xymatrix{
\wedge^{r_k} P_k \ar[rr]^{\wedge^{r_k} f_k} \ar[dr]_{a^\star_{k+1}}& &\wedge^{r_k}P_{k-1}\\
& (\kk[B,C]/\mathscr S)[X] \ar[ru]_{a_k}}\,.
\]
The maps $a_k$ are constructed recursively starting from $a_n:=\wedge^{r_n}f_n$. There is a unique map $a$ making the following diagram commute (see \cite[page 94]{BE1974}):
\[
\xymatrix{
\wedge^{r_{k-1}} P^\star_{k-2} \ar[rr]^{\wedge^{r_{k-1}} f^\star_{k-1}} \ar[dr]_{a}& &\wedge^{r_{k-1}}P^\star_{k-1}\\
& (\kk[B,C]/\mathscr S)[X] \ar[ru]_{a_k}}\,.
\]
Then $a_{k-1}:=a^\star$ depends on $a_k$ and $\wedge^{r_{k-1}} f^\star_{k-1}$. In particular, $a_2$ can be described in terms of $a_3$ and $\wedge^{r_2} f^\star_2$,  hence in terms of the parameters $B$.

By \cite[Corollary 5.2]{BE1974}, we have $\mathcal I=h I(a_2)$, where $h$ is the generator of $I(a_1)$ and must be a constant, otherwise $\mathcal I$ would have depth $1$, contradicting the hypothesis. Hence, the ideal $\mathcal I$ depends only on the parameters $B$, modulo $\mathscr S\cap \kk[B]$, because $I(a_2)$ does. 

In conclusion, the generators of $\mathcal I$ contained in $\mathcal F$, modulo $\mathscr U$, can be expressed in terms of the parameters $B$, modulo $\mathscr S\cap \kk[B]$. This defines a map
\[
\Phi: \kk[C]/\mathscr U \to \kk[B]/(\mathscr S\cap \kk[B])
\]
that associates to each parameter $c$ of the set $C$ a corresponding polynomial $q_c \in \kk[B]$. Moreover, $\Phi$ is an isomorphism because both compositions $\Phi \circ \Phi'_F$ and $\Phi'_F\circ \Phi$ are the identity maps, where $\Phi'_F: \kk[B]/(\mathscr S\cap \kk[B]) \to \mathbb K[C]/\mathscr U=:A$ is the map introduced in the proof of Theorem~\ref{th:representation second factor}.
\end{proof}

\begin{example}
In this example, we illustrate that Theorem \ref{th:isomorphism 2} does not hold if the quasi-stable module $U$ is not an ideal. We consider hyperplane sections by $x_0$ of the modules of Example~\ref{ex:funtori vari}. In particular we take $U\subseteq R(-1)\oplus R(-2)^2$, with $\PP{U}=\{x_2 {\bf f}_3, x_3{\bf f}_3, x_3{\bf f}_2\}$ and $\mathcal N(U)=\T{\bf f}_1 \cup \{x_1^{\alpha_1}x_2^{\alpha_2} : (\alpha_1,\alpha_2)\in \mathbb Z_{\geqslant 0}^2 \}{\bf f}_2 \cup \{x_1^{\alpha_1} : \alpha_1\in \mathbb Z_{\geqslant 0} \}{\bf f}_3$, and moreover $\PP{U^{(1)}}=\{x_3 {\bf f}_{x_3,1}\}$.

The elements of the $U$-marked set $\mathcal F$  in $R_{\kk[C]}(-1)\oplus R_{\kk[C]}(-2)^2$, with $\kk[C]=\kk[c_1,\dots,c_{28}]$, %$\kk[c_1,\dots,c_{28},x_1,x_2,x_3]$
 are:
\[
\begin{split}
    F_1={}&\HTemph{x_2\ff_{3}} + (c_{19}x_1^2-c_{20}x_1x_2-c_{21}x_1x_3-c_{22}x_2^2-c_{23}x_2x_3-c_{24}x_3^2)\ff_{1}+{}\\
    &( -c_{25}x_1-c_{26}x_2)\ff_{2} -c_{28}x_1\ff_{3},\\
    F_2={}&\HTemph{x_3\ff_3} + (-c_{10}x_1^2-c_{11}x_1x_2-c_{12}x_1x_3-c_{13}x_2^2-c_{14}x_2x_3-c_{15}x_3^2)\ff_1+{}\\ & +( -c_{16}x_1-c_{17}x_2)\ff_2 -c_{18}x_1\ff_3,\\
    F_3={}&\HTemph{ x_3\ff_2} + (-c_1x_1^2-c_2x_1x_2-c_3x_1x_3-c_4x_2^2-c_5x_2x_3-c_6x_3^2)\ff_{1}+{}\\
    &(-c_7x_1-c_8x_2)\ff_2 -c_9x_1\ff_3.
\end{split}
\]
These form the columns of the matrix $M_0$, while the matrix $M_1$ is
\[
M_1=\left[\begin{array}{c} \HTemph{x_3}-b_1x_1-b_2x_2 \\ -b_3x_1-b_4x_2 \\ -b_5x_1-b_6x_2 \end{array}\right].
\]
By imposing $M_0 M_1=0$ and applying Corollary \ref{cor:prodotto matrici}, we obtain the ideal $\mathscr S\subseteq \kk[B,C]$. In this case $\kk[B,C]/\mathscr S$ and $\kk[B]/(\mathscr S \cap \kk[B])$ are not isomorphic, since direct computations give $\dim (\kk[B,C]/\mathscr S)=16$ and $\dim (\kk[B]/(\mathscr S \cap \kk[B]))=4$.  
\end{example}

%%%%%%%%%%%%%
%% Example %%
%%%%%%%%%%%%%
\appendix

\section{An example on locally Cohen-Macaulay curves}\label{sec:finalEx}

We conclude the paper with a detailed example concerning  a Hilbert scheme of $1$-dimension-al subschemes in the projective space $\mathbb{P}^3$. From now on, we assume that the base the field $\mathbb{K}$ is algebraically closed of characteristic 0. %Details of the calculations are available in 
The explicit computations were performed using \textit{Macaulay2} \cite{M2}, and the commands to reproduce our results are provided in the ancillary file \href{http://www.paololella.it/software/example-curves-P3.m2}{\tt example-base.m2}. Additional ancillary files introduced below contain the results.

\smallskip

We\hfill consider\hfill the\hfill Hilbert\hfill scheme\hfill $\mathbf{Hilb}^{3t+2}(\mathbb{P}^3)$,\hfill which\hfill parametrizes\hfill 1-dimensional\hfill sub-\\ schemes of $\mathbb{P}^3$ with degree $3$ and arithmetic genus $-1$. This scheme has three irreducible components.
\begin{enumerate}
    \item The first component $\mathcal{C}_1$ is rational of dimension 18, and its generic point corresponds to the disjoint union of a plane cubic and two points (see Figure \ref{fig:example}, last row, left column).
    \item The second component $\mathcal{C}_2$ is rational of dimension 15, and its generic point corresponds to the disjoint union of a twisted cubic and one point (see Figure \ref{fig:example}, last row, center column).
    \item The third component $\mathcal{C}_3$ is rational of dimension 12, and its generic point corresponds to the disjoint union of a plane conic and a line (see Figure \ref{fig:example}, last row, right column).
\end{enumerate}

\begin{figure}[!hb]
        \begin{center}
        \begin{tikzpicture}

\tdplotsetmaincoords{65}{140}
\begin{scope}[shift={(1.2,-16.75)},scale=0.5,tdplot_main_coords,rotate=90]

     % angolo di vista (elevazione, azimut)

  % Disegna assi 3D
  %\draw[dashed] (0,0,0) -- (-2,0,0);

  % Piano x=0 trasparente
  \filldraw[fill=black!20, draw=none, opacity=0.25]
    (-2.1,0,-2.1) -- (-2.1,0,2.1) -- (2.1,0,2.1) -- (2.1,0,-2.1) -- cycle;

    \filldraw[fill=black!20, draw=none, opacity=0.25]
    (-2.1,-2.1,0) -- (-2.1,2.1,0) -- (2.1,2.1,0) -- (2.1,-2.1,0) -- cycle;

  \filldraw[fill=black!20, draw=none, opacity=0.25]
    (0,-2.1,-2.1) -- (0,2.1,-2.1) -- (0,2.1,2.1) -- (0,-2.1,2.1) -- cycle;

\draw[] (-2.5,0,0) -- (2.5,0,0); %node[anchor=north east]{$x$};
  \draw[] (0,-2.5,0) -- (0,2.5,0); %node[anchor=north west]{$y$};
  \draw[] (0,0,-2.5) -- (0,0,2.5); %node[anchor=south]{$z$};    

 \filldraw[red] (1,-1,1) circle (1pt);
 \filldraw[red] (1,-1,-1) circle (1pt);
 \draw[thick, red]
    (0,-2,2.08008) -- (0,-1.99,2.07084) -- (0,-1.98,2.06161) -- (0,-1.97,2.0524) -- (0,-1.96,2.04319) -- (0,-1.95,2.03399) -- (0,-1.94,2.02481) -- (0,-1.93,2.01563) -- (0,-1.92,2.00647) -- (0,-1.91,1.99732)
      -- (0,-1.9,1.98818) -- (0,-1.89,1.97905) -- (0,-1.88,1.96994) -- (0,-1.87,1.96084) -- (0,-1.86,1.95175) -- (0,-1.85,1.94267) -- (0,-1.84,1.93361) -- (0,-1.83,1.92456) -- (0,-1.82,1.91553) --
      (0,-1.81,1.90651) -- (0,-1.8,1.8975) -- (0,-1.79,1.88851) -- (0,-1.78,1.87954) -- (0,-1.77,1.87057) -- (0,-1.76,1.86163) -- (0,-1.75,1.8527) -- (0,-1.74,1.84378) -- (0,-1.73,1.83489) --
      (0,-1.72,1.82601) -- (0,-1.71,1.81714) -- (0,-1.7,1.8083) -- (0,-1.69,1.79947) -- (0,-1.68,1.79065) -- (0,-1.67,1.78186) -- (0,-1.66,1.77309) -- (0,-1.65,1.76433) -- (0,-1.64,1.7556) --
      (0,-1.63,1.74688) -- (0,-1.62,1.73818) -- (0,-1.61,1.72951) -- (0,-1.6,1.72085) -- (0,-1.59,1.71222) -- (0,-1.58,1.7036) -- (0,-1.57,1.69501) -- (0,-1.56,1.68645) -- (0,-1.55,1.6779) --
      (0,-1.54,1.66938) -- (0,-1.53,1.66088) -- (0,-1.52,1.65241) -- (0,-1.51,1.64396) -- (0,-1.5,1.63553) -- (0,-1.49,1.62713) -- (0,-1.48,1.61876) -- (0,-1.47,1.61042) -- (0,-1.46,1.6021) --
      (0,-1.45,1.59381) -- (0,-1.44,1.58554) -- (0,-1.43,1.57731) -- (0,-1.42,1.56911) -- (0,-1.41,1.56093) -- (0,-1.4,1.55279) -- (0,-1.39,1.54467) -- (0,-1.38,1.53659) -- (0,-1.37,1.52854) --
      (0,-1.36,1.52053) -- (0,-1.35,1.51254) -- (0,-1.34,1.50459) -- (0,-1.33,1.49668) -- (0,-1.32,1.4888) -- (0,-1.31,1.48096) -- (0,-1.3,1.47315) -- (0,-1.29,1.46538) -- (0,-1.28,1.45765) --
      (0,-1.27,1.44996) -- (0,-1.26,1.44231) -- (0,-1.25,1.4347) -- (0,-1.24,1.42713) -- (0,-1.23,1.4196) -- (0,-1.22,1.41211) -- (0,-1.21,1.40467) -- (0,-1.2,1.39727) -- (0,-1.19,1.38992) --
      (0,-1.18,1.38261) -- (0,-1.17,1.37535) -- (0,-1.16,1.36814) -- (0,-1.15,1.36098) -- (0,-1.14,1.35386) -- (0,-1.13,1.3468) -- (0,-1.12,1.33978) -- (0,-1.11,1.33282) -- (0,-1.1,1.32591) --
      (0,-1.09,1.31905) -- (0,-1.08,1.31225) -- (0,-1.07,1.30551) -- (0,-1.06,1.29882) -- (0,-1.05,1.29219) -- (0,-1.04,1.28561) -- (0,-1.03,1.2791) -- (0,-1.02,1.27264) -- (0,-1.01,1.26625) -- (0,-1,1.25992)
      -- (0,-.99,1.25365) -- (0,-.98,1.24745) -- (0,-.97,1.24131) -- (0,-.96,1.23524) -- (0,-.95,1.22923) -- (0,-.94,1.22329) -- (0,-.93,1.21742) -- (0,-.92,1.21162) -- (0,-.91,1.20589) -- (0,-.9,1.20023) --
      (0,-.89,1.19464) -- (0,-.88,1.18913) -- (0,-.87,1.18369) -- (0,-.86,1.17833) -- (0,-.85,1.17304) -- (0,-.84,1.16783) -- (0,-.83,1.16269) -- (0,-.82,1.15763) -- (0,-.81,1.15266) -- (0,-.8,1.14776) --
      (0,-.79,1.14294) -- (0,-.78,1.1382) -- (0,-.77,1.13355) -- (0,-.76,1.12898) -- (0,-.75,1.12449) -- (0,-.74,1.12008) -- (0,-.73,1.11576) -- (0,-.72,1.11152) -- (0,-.71,1.10736) -- (0,-.7,1.1033) --
      (0,-.69,1.09931) -- (0,-.68,1.09542) -- (0,-.67,1.09161) -- (0,-.66,1.08788) -- (0,-.65,1.08425) -- (0,-.64,1.08069) -- (0,-.63,1.07723) -- (0,-.62,1.07385) -- (0,-.61,1.07056) -- (0,-.6,1.06736) --
      (0,-.59,1.06424) -- (0,-.58,1.06121) -- (0,-.57,1.05827) -- (0,-.56,1.05541) -- (0,-.55,1.05264) -- (0,-.54,1.04995) -- (0,-.53,1.04735) -- (0,-.52,1.04483) -- (0,-.51,1.04239) -- (0,-.5,1.04004) --
      (0,-.49,1.03777) -- (0,-.48,1.03558) -- (0,-.47,1.03347) -- (0,-.46,1.03145) -- (0,-.45,1.0295) -- (0,-.44,1.02762) -- (0,-.43,1.02583) -- (0,-.42,1.02411) -- (0,-.41,1.02247) -- (0,-.4,1.02089) --
      (0,-.39,1.01939) -- (0,-.38,1.01797) -- (0,-.37,1.01661) -- (0,-.36,1.01532) -- (0,-.35,1.01409) -- (0,-.34,1.01293) -- (0,-.33,1.01184) -- (0,-.32,1.01081) -- (0,-.31,1.00983) -- (0,-.3,1.00892) --
      (0,-.29,1.00806) -- (0,-.28,1.00726) -- (0,-.27,1.00652) -- (0,-.26,1.00582) -- (0,-.25,1.00518) -- (0,-.24,1.00459) -- (0,-.23,1.00404) -- (0,-.22,1.00354) -- (0,-.21,1.00308) -- (0,-.2,1.00266) --
      (0,-.19,1.00228) -- (0,-.18,1.00194) -- (0,-.17,1.00163) -- (0,-.16,1.00136) -- (0,-.15,1.00112) -- (0,-.14,1.00091) -- (0,-.13,1.00073) -- (0,-.12,1.00058) -- (0,-.11,1.00044) -- (0,-.1,1.00033) --
      (0,-.09,1.00024) -- (0,-.08,1.00017) -- (0,-.07,1.00011) -- (0,-.06,1.00007) -- (0,-.05,1.00004) -- (0,-.04,1.00002) -- (0,-.03,1.00001) -- (0,-.02,1) -- (0,-.01,1) -- (0,1.64105e-15,1) -- (0,.01,1) --
      (0,.02,.999997) -- (0,.03,.999991) -- (0,.04,.999979) -- (0,.05,.999958) -- (0,.06,.999928) -- (0,.07,.999886) -- (0,.08,.999829) -- (0,.09,.999757) -- (0,.1,.999667) -- (0,.11,.999556) --
      (0,.12,.999424) -- (0,.13,.999267) -- (0,.14,.999084) -- (0,.15,.998874) -- (0,.16,.998633) -- (0,.17,.99836) -- (0,.18,.998052) -- (0,.19,.997708) -- (0,.2,.997326) -- (0,.21,.996903) --
      (0,.22,.996438) -- (0,.23,.995928) -- (0,.24,.995371) -- (0,.25,.994764) -- (0,.26,.994107) -- (0,.27,.993395) -- (0,.28,.992628) -- (0,.29,.991803) -- (0,.3,.990918) -- (0,.31,.989969) --
      (0,.32,.988956) -- (0,.33,.987875) -- (0,.34,.986723) -- (0,.35,.985499) -- (0,.36,.9842) -- (0,.37,.982822) -- (0,.38,.981364) -- (0,.39,.979823) -- (0,.4,.978195) -- (0,.41,.976477) -- (0,.42,.974668)
      -- (0,.43,.972763) -- (0,.44,.970759) -- (0,.45,.968653) -- (0,.46,.966441) -- (0,.47,.96412) -- (0,.48,.961687) -- (0,.49,.959137) -- (0,.5,.956466) -- (0,.51,.95367) -- (0,.52,.950744) --
      (0,.53,.947685) -- (0,.54,.944487) -- (0,.55,.941146) -- (0,.56,.937655) -- (0,.57,.93401) -- (0,.58,.930205) -- (0,.59,.926232) -- (0,.6,.922087) -- (0,.61,.917762) -- (0,.62,.913249) --
      (0,.63,.908541) -- (0,.64,.90363) -- (0,.65,.898506) -- (0,.66,.89316) -- (0,.67,.887581) -- (0,.68,.88176) -- (0,.69,.875683) -- (0,.7,.869338) -- (0,.71,.86271) -- (0,.72,.855786) -- (0,.73,.848548)
      -- (0,.74,.840978) -- (0,.75,.833055) -- (0,.76,.824759) -- (0,.77,.816064) -- (0,.78,.806944) -- (0,.79,.797367) -- (0,.8,.787299) -- (0,.81,.776703) -- (0,.82,.765532) -- (0,.83,.753737) --
      (0,.84,.741259) -- (0,.85,.728029) -- (0,.86,.713967) -- (0,.87,.698976) -- (0,.88,.68294) -- (0,.89,.665716) -- (0,.9,.647127) -- (0,.91,.626947) -- (0,.92,.604879) -- (0,.93,.580526) --
      (0,.94,.553331) -- (0,.95,.522475) -- (0,.96,.486666) -- (0,.97,.443659) -- (0,.98,.388877) -- (0,.99,.309688) -- (0,1,-.0000188157) -- (0,1.01,-.311759) -- (0,1.02,-.394097) -- (0,1.03,-.452622) --
      (0,1.04,-.499819) -- (0,1.05,-.540184) -- (0,1.06,-.575913) -- (0,1.07,-.608259) -- (0,1.08,-.638015) -- (0,1.09,-.665715) -- (0,1.1,-.69174) -- (0,1.11,-.71637) -- (0,1.12,-.73982) -- (0,1.13,-.762256)
      -- (0,1.14,-.783812) -- (0,1.15,-.804596) -- (0,1.16,-.824696) -- (0,1.17,-.844188) -- (0,1.18,-.863133) -- (0,1.19,-.881584) -- (0,1.2,-.899588) -- (0,1.21,-.917185) -- (0,1.22,-.934408) --
      (0,1.23,-.951288) -- (0,1.24,-.967852) -- (0,1.25,-.984124) -- (0,1.26,-1.00013) -- (0,1.27,-1.01587) -- (0,1.28,-1.03139) -- (0,1.29,-1.04668) -- (0,1.3,-1.06177) -- (0,1.31,-1.07667) --
      (0,1.32,-1.09138) -- (0,1.33,-1.10593) -- (0,1.34,-1.12031) -- (0,1.35,-1.13454) -- (0,1.36,-1.14863) -- (0,1.37,-1.16258) -- (0,1.38,-1.17641) -- (0,1.39,-1.19011) -- (0,1.4,-1.20369) --
      (0,1.41,-1.21717) -- (0,1.42,-1.23053) -- (0,1.43,-1.2438) -- (0,1.44,-1.25697) -- (0,1.45,-1.27005) -- (0,1.46,-1.28304) -- (0,1.47,-1.29595) -- (0,1.48,-1.30878) -- (0,1.49,-1.32153) --
      (0,1.5,-1.3342) -- (0,1.51,-1.34681) -- (0,1.52,-1.35934) -- (0,1.53,-1.37181) -- (0,1.54,-1.38422) -- (0,1.55,-1.39657) -- (0,1.56,-1.40886) -- (0,1.57,-1.42109) -- (0,1.58,-1.43327) --
      (0,1.59,-1.4454) -- (0,1.6,-1.45747) -- (0,1.61,-1.4695) -- (0,1.62,-1.48148) -- (0,1.63,-1.49342) -- (0,1.64,-1.50531) -- (0,1.65,-1.51715) -- (0,1.66,-1.52896) -- (0,1.67,-1.54073) --
      (0,1.68,-1.55246) -- (0,1.69,-1.56415) -- (0,1.7,-1.57581) -- (0,1.71,-1.58743) -- (0,1.72,-1.59902) -- (0,1.73,-1.61057) -- (0,1.74,-1.62209) -- (0,1.75,-1.63358) -- (0,1.76,-1.64504) --
      (0,1.77,-1.65648) -- (0,1.78,-1.66788) -- (0,1.79,-1.67926) -- (0,1.8,-1.69061) -- (0,1.81,-1.70193) -- (0,1.82,-1.71323) -- (0,1.83,-1.7245) -- (0,1.84,-1.73575) -- (0,1.85,-1.74697) --
      (0,1.86,-1.75818) -- (0,1.87,-1.76936) -- (0,1.88,-1.78052) -- (0,1.89,-1.79166) -- (0,1.9,-1.80277) -- (0,1.91,-1.81387) -- (0,1.92,-1.82495) -- (0,1.93,-1.83601) -- (0,1.94,-1.84705) --
      (0,1.95,-1.85807) -- (0,1.96,-1.86908) -- (0,1.97,-1.88007) -- (0,1.98,-1.89104) -- (0,1.99,-1.90199) -- (0,2,-1.91293);

\end{scope}

\begin{scope}[shift={(4.2,-16.75)},scale=0.5,tdplot_main_coords,rotate=90]

     % angolo di vista (elevazione, azimut)

  % Disegna assi 3D
  %\draw[dashed] (0,0,0) -- (-2,0,0);

  % Piano x=0 trasparente
  \filldraw[fill=black!20, draw=none, opacity=0.25]
    (-2.1,0,-2.1) -- (-2.1,0,2.1) -- (2.1,0,2.1) -- (2.1,0,-2.1) -- cycle;

    \filldraw[fill=black!20, draw=none, opacity=0.25]
    (-2.1,-2.1,0) -- (-2.1,2.1,0) -- (2.1,2.1,0) -- (2.1,-2.1,0) -- cycle;

  \filldraw[fill=black!20, draw=none, opacity=0.25]
    (0,-2.1,-2.1) -- (0,2.1,-2.1) -- (0,2.1,2.1) -- (0,-2.1,2.1) -- cycle;

\draw[] (-2.5,0,0) -- (2.5,0,0); %node[anchor=north east]{$x$};
  \draw[] (0,-2.5,0) -- (0,2.5,0); %node[anchor=north west]{$y$};
  \draw[] (0,0,-2.5) -- (0,0,2.5); %node[anchor=south]{$z$};    

 \filldraw[red] (0,1,0) circle (1pt);
 
 \draw[thick, red]  (-1.25992,1.5874,-2) -- (-1.20992,1.46391,-1.77121) -- (-1.15992,1.34542,-1.56058) -- (-1.10992,1.23192,-1.36734) -- (-1.05992,1.12343,-1.19075) -- (-1.00992,1.01994,-1.03006) --
      (-.959921,.921448,-.884518) -- (-.909921,.827956,-.753375) -- (-.859921,.739464,-.635881) -- (-.809921,.655972,-.531286) -- (-.759921,.57748,-.438839) -- (-.709921,.503988,-.357792) --
      (-.659921,.435496,-.287393) -- (-.609921,.372004,-.226893) -- (-.559921,.313512,-.175542) -- (-.509921,.260019,-.132589) -- (-.459921,.211527,-.0972859) -- (-.409921,.168035,-.0688812) --
      (-.359921,.129543,-.0466253) -- (-.309921,.0960511,-.0297682) -- (-.259921,.067559,-.01756) -- (-.209921,.0440668,-.00925056) -- (-.159921,.0255747,-.00408994) -- (-.109921,.0120826,-.00132814) --
      (-.059921,.00359053,-.000215148) -- (-.00992105,.0000984272,-9.76501e-7) -- (.040079,.00160632,.0000643797) -- (.090079,.00811422,.00073092) -- (.140079,.0196221,.00274864) -- (.190079,.03613,.00686755)
      -- (.240079,.0576379,.0138376) -- (.290079,.0841458,.0244089) -- (.340079,.115654,.0393314) -- (.390079,.152162,.059355) -- (.440079,.193669,.0852299) -- (.490079,.240177,.117706) --
      (.540079,.291685,.157533) -- (.590079,.348193,.205461) -- (.640079,.409701,.262241) -- (.690079,.476209,.328622) -- (.740079,.547717,.405354) -- (.790079,.624225,.493187) -- (.840079,.705733,.592871) --
      (.890079,.792241,.705157) -- (.940079,.883748,.830793) -- (.990079,.980256,.970531) -- (1.04008,1.08176,1.12512) -- (1.09008,1.18827,1.29531) -- (1.14008,1.29978,1.48185) -- (1.19008,1.41629,1.68549) --
      (1.24008,1.5378,1.90699);

\end{scope}

\begin{scope}[shift={(7.2,-16.75)},scale=0.5,tdplot_main_coords,rotate=90]

     % angolo di vista (elevazione, azimut)

  % Disegna assi 3D
  %\draw[dashed] (0,0,0) -- (-2,0,0);

  % Piano x=0 trasparente
  \filldraw[fill=black!20, draw=none, opacity=0.25]
    (-2.1,0,-2.1) -- (-2.1,0,2.1) -- (2.1,0,2.1) -- (2.1,0,-2.1) -- cycle;

    \filldraw[fill=black!20, draw=none, opacity=0.25]
    (-2.1,-2.1,0) -- (-2.1,2.1,0) -- (2.1,2.1,0) -- (2.1,-2.1,0) -- cycle;

  \filldraw[fill=black!20, draw=none, opacity=0.25]
    (0,-2.1,-2.1) -- (0,2.1,-2.1) -- (0,2.1,2.1) -- (0,-2.1,2.1) -- cycle;

\draw[] (-2.5,0,0) -- (2.5,0,0); %node[anchor=north east]{$x$};
  \draw[] (0,-2.5,0) -- (0,2.5,0); %node[anchor=north west]{$y$};
  \draw[] (0,0,-2.5) -- (0,0,2.5); %node[anchor=south]{$z$};    

 \draw[thick, red]  (-1,1,-2) -- (-1,1,2);

\draw[thick, red]  (0,-1.73205,-2) -- (0,-1.68205,-1.95686) -- (0,-1.63205,-1.91405) -- (0,-1.58205,-1.8716) -- (0,-1.53205,-1.82953) -- (0,-1.48205,-1.78787) -- (0,-1.43205,-1.74665) -- (0,-1.38205,-1.70589) --
      (0,-1.33205,-1.66564) -- (0,-1.28205,-1.62593) -- (0,-1.23205,-1.5868) -- (0,-1.18205,-1.5483) -- (0,-1.13205,-1.51048) -- (0,-1.08205,-1.47338) -- (0,-1.03205,-1.43706) -- (0,-.982051,-1.40158) --
      (0,-.932051,-1.36701) -- (0,-.882051,-1.33342) -- (0,-.832051,-1.30089) -- (0,-.782051,-1.26949) -- (0,-.732051,-1.23931) -- (0,-.682051,-1.21045) -- (0,-.632051,-1.183) -- (0,-.582051,-1.15706) --
      (0,-.532051,-1.13273) -- (0,-.482051,-1.11012) -- (0,-.432051,-1.08934) -- (0,-.382051,-1.0705) -- (0,-.332051,-1.05369) -- (0,-.282051,-1.03902) -- (0,-.232051,-1.02657) -- (0,-.182051,-1.01644) --
      (0,-.132051,-1.00868) -- (0,-.0820508,-1.00336) -- (0,-.0320508,-1.00051) -- (0,.0179492,-1.00016) -- (0,.0679492,-1.00231) -- (0,.117949,-1.00693) -- (0,.167949,-1.01401) -- (0,.217949,-1.02348) --
      (0,.267949,-1.03528) -- (0,.317949,-1.04933) -- (0,.367949,-1.06555) -- (0,.417949,-1.08383) -- (0,.467949,-1.10407) -- (0,.517949,-1.12618) -- (0,.567949,-1.15003) -- (0,.617949,-1.17553) --
      (0,.667949,-1.20256) -- (0,.717949,-1.23104) -- (0,.767949,-1.26085) -- (0,.817949,-1.29191) -- (0,.867949,-1.32414) -- (0,.917949,-1.35744) -- (0,.967949,-1.39173) -- (0,1.01795,-1.42696) --
      (0,1.06795,-1.46305) -- (0,1.11795,-1.49994) -- (0,1.16795,-1.53756) -- (0,1.21795,-1.57588) -- (0,1.26795,-1.61484) -- (0,1.31795,-1.65439) -- (0,1.36795,-1.69449) -- (0,1.41795,-1.7351) --
      (0,1.46795,-1.7762) -- (0,1.51795,-1.81774) -- (0,1.56795,-1.85969) -- (0,1.61795,-1.90204) -- (0,1.66795,-1.94475) -- (0,1.71795,-1.9878);
 \draw[thick, red]  (0,-1.73205,2) -- (0,-1.68205,1.95686) -- (0,-1.63205,1.91405) -- (0,-1.58205,1.8716) -- (0,-1.53205,1.82953) -- (0,-1.48205,1.78787) -- (0,-1.43205,1.74665) -- (0,-1.38205,1.70589) --
      (0,-1.33205,1.66564) -- (0,-1.28205,1.62593) -- (0,-1.23205,1.5868) -- (0,-1.18205,1.5483) -- (0,-1.13205,1.51048) -- (0,-1.08205,1.47338) -- (0,-1.03205,1.43706) -- (0,-.982051,1.40158) --
      (0,-.932051,1.36701) -- (0,-.882051,1.33342) -- (0,-.832051,1.30089) -- (0,-.782051,1.26949) -- (0,-.732051,1.23931) -- (0,-.682051,1.21045) -- (0,-.632051,1.183) -- (0,-.582051,1.15706) --
      (0,-.532051,1.13273) -- (0,-.482051,1.11012) -- (0,-.432051,1.08934) -- (0,-.382051,1.0705) -- (0,-.332051,1.05369) -- (0,-.282051,1.03902) -- (0,-.232051,1.02657) -- (0,-.182051,1.01644) --
      (0,-.132051,1.00868) -- (0,-.0820508,1.00336) -- (0,-.0320508,1.00051) -- (0,.0179492,1.00016) -- (0,.0679492,1.00231) -- (0,.117949,1.00693) -- (0,.167949,1.01401) -- (0,.217949,1.02348) --
      (0,.267949,1.03528) -- (0,.317949,1.04933) -- (0,.367949,1.06555) -- (0,.417949,1.08383) -- (0,.467949,1.10407) -- (0,.517949,1.12618) -- (0,.567949,1.15003) -- (0,.617949,1.17553) --
      (0,.667949,1.20256) -- (0,.717949,1.23104) -- (0,.767949,1.26085) -- (0,.817949,1.29191) -- (0,.867949,1.32414) -- (0,.917949,1.35744) -- (0,.967949,1.39173) -- (0,1.01795,1.42696) --
      (0,1.06795,1.46305) -- (0,1.11795,1.49994) -- (0,1.16795,1.53756) -- (0,1.21795,1.57588) -- (0,1.26795,1.61484) -- (0,1.31795,1.65439) -- (0,1.36795,1.69449) -- (0,1.41795,1.7351) -- (0,1.46795,1.7762)
      -- (0,1.51795,1.81774) -- (0,1.56795,1.85969) -- (0,1.61795,1.90204) -- (0,1.66795,1.94475) -- (0,1.71795,1.9878);
\end{scope}

\begin{scope}[xscale=0.6,yscale=-1]
    
        \node at (0.1,17.9) [rotate=90] {$\mathcal{C}_1$};
        \node at (5.1,17.9) [rotate=90] {$\mathcal{C}_2$};
        \node at (10.1,17.9) [rotate=90] {$\mathcal{C}_3$};

%$\{f_{x_3^2},f_{x_3x_2},f_{x_2^3},f_{x_3 x_1^2}\}$
\begin{scope}[shift={(0,1)}]
\node (0) at (-4,0) [rotate=90] {$R$};
\node (1) at (-4,5) [rotate=90] {$R(-2)^2 \oplus R(-3)^2$};
\draw [->] (1) --node[rotate=90,above]{\tiny $\partial_0=\left[f_{x_3^2}\ f_{x_3x_2} \ f_{x_2^3} \ f_{x_3x_1^2}\right]$} (0);
\node (2) at (-4,9) [rotate=90] {$R(-3) \oplus R(-4)^3$};
\node (3) at (-4,12) [rotate=90] {$R(-5)$};
\node (4) at (-4,13.5) [rotate=90] {$0$};

\draw [->] (2) --node[rotate=90,above]{\tiny $\partial_1$} (1);
\draw [->] (3) --node[rotate=90,above]{\tiny $\partial_2$} (2);
\draw [->] (4) -- (3);
\end{scope}

        \node at (-1,2.5) [rotate=90] {$\partial_0$ }; 
        \node at (-1,9) [rotate=90] {$\partial_1$};         
        \node at (-1,14) [rotate=90] {$\partial_2$};  

\draw (-1.5,12.75) -- (14.5,12.75);
\draw (-1.5,15.25) -- (14.5,15.25);
\draw (-1.5,5.25) -- (14.5,5.25);
\draw (-1.5,0) -- (14.5,0);
\draw (-0.5,18.25) -- (14.5,18.25);

\draw (-1.5,0) -- (-1.5,15.25);
\draw (-0.5,0) -- (-0.5,18.25);
\draw (4.5,0) -- (4.5,18.25);
\draw (9.5,0) -- (9.5,18.25);
\draw (14.5,0) -- (14.5,18.25);

        \node at (0.5,2.5) [rotate=90] {\parbox{4cm}{\tiny $\uwave{x_3^2} - x_3x_0$}};
        \node at (1.5,2.5) [rotate=90] {\parbox{4cm}{\tiny $\uwave{x_3x_2} + x_3x_0$}};
        \node at (2.5,2.5) [rotate=90] {\parbox{4cm}{\tiny $\uwave{x_2^3} - x_3x_1x_0+2x_3x_0^2 + x_1^3 - x_0^3$}};
        \node at (3.5,2.5) [rotate=90] {\parbox{4cm}{\tiny $\uwave{x_3x_1^2} - x_3x_0^2$}};
        
        \node at (5.5,2.5) [rotate=90] {\parbox{4cm}{\tiny $\uwave{x_3^2} - x_3x_1 + x_2^2 - x_2x_0$}};
        \node at (6.5,2.5) [rotate=90] {\parbox{4cm}{\tiny $\uwave{x_3x_2} - x_1x_0$}};
        \node at (7.5,2.5) [rotate=90] {\parbox{4cm}{\tiny $\uwave{x_2^3} + x_3x_1x_0 - x_2^2 x_0 -x_1^2x_0$}};
        \node at (8.5,2.5) [rotate=90] {\parbox{4cm}{\tiny $\uwave{x_3x_1^2} - x_2^2x_1$}};
        
        \node at (10.5,2.5) [rotate=90] {\parbox{4cm}{\tiny $\uwave{x_3^2}+x_{3}x_{0}$}};
        \node at (11.5,2.5) [rotate=90] {\parbox{4cm}{\tiny $\uwave{x_3x_2} -x_{3}x_{0} $}};
        \node at (12.5,2.5) [rotate=90] {\parbox{4cm}{\tiny $\uwave{x_2^3} -x_{2}x_{1}^{2}-x_{2}^{2}x_{0}+x_{1}^{2}x_{0}+x_{2}x_{0}^{2}-x_{0}^{3}$}};
        \node at (13.5,2.5) [rotate=90] {\parbox{4cm}{\tiny $\uwave{x_3x_1^2} -x_{2}^{2}x_{0}-2\,x_{3}x_{0}^{2}+x_{1}^{2}x_{0}-x_{0}^{3}$}};

\node at (2,9) [rotate=90] {\tiny $\left[\begin{array}{cccc}
      -x_{2}-x_{0}&x_{1}x_{0}-2\,x_{0}^{2}&-x_{1}^{2}+x_{0}^{2}&0\\
      \uwave{x_{3}}-x_{0}&-x_{2}^{2}+x_{2}x_{0}-x_{0}^{2}&0 &-x_{1}^{2}+x_{0}^{2}\\
      0&\uwave{x_{3}}&0&0\\
      0&-x_{1}&\uwave{x_{3}}-x_{0}&\uwave{x_{2}}+x_{0}
      \end{array}\right]$};

      \node at (2,14) [rotate=90] {\tiny $\left[\begin{array}{c}
      x_1^2 - x_0^2 \\ 0 \\ -x_2 - x_0 \\ \uwave{x_3} - x_0
      \end{array}\right]$};

\node at (7,9) [rotate=90] {\tiny $\left[\begin{array}{cccc}
      -x_{2}&-x_{1}x_{0}&-x_{1}^{2}&0\\
      \uwave{x_{3}}-x_{1}&-x_{2}^{2}+x_{2}x_{0}&x_{2}x_{1}&-x_{1}^{2}\\
      1&\uwave{x_{3}}&0&x_{1}\\
      0&0&\uwave{x_{3}}-x_{1}&\uwave{x_{2}}-x_{0}
      \end{array}\right]$};

\node at (7,14) [rotate=90] {\tiny $\left[\begin{array}{c}
      x_{1}^{2}\\
      -x_{1}\\
      -x_{2}+x_{0}\\
      \uwave{x_{3}}-x_{1}
      \end{array}\right]$};

\node at (12,9) [rotate=90] {\tiny $\left[\begin{array}{cccc}
      -x_{2}+x_{0}&0&-x_{1}^{2}+2\,x_{0}^{2}&0\\
      \uwave{x_{3}}+x_{0}&-x_{2}^{2}+x_{1}^{2}-x_{0}^{2}&x_{2}x_{0}+x_{0}^{2}&-x_{1}^{2}+2\,x_{0}^{2}\\
      0&\uwave{x_{3}}&0&x_{0}\\
      0&0&\uwave{x_{3}}&\uwave{x_{2}}-x_{0}
      \end{array}\right]$};

      \node at (12,14) [rotate=90] {\tiny $\left[\begin{array}{c}
      x_{1}^{2}-2x_0^2\\
      -x_{0}\\
      -x_{2}+x_{0}\\
      \uwave{x_{3}}
      \end{array}\right]$};

\end{scope}

        \end{tikzpicture}
    \end{center}
    \caption{Examples of $J$-marked bases defining points in the irreducible components of $\mathbf{Hilb}^{3t+2}(\mathbb{P}^3)$ with their $J$-resolutions. The computation of $J$-resolutions is available in the \textit{M2} ancillary file \href{http://www.paololella.it/software/examples-curves-each-component.m2}{\tt examples- curves-each-component.m2}}
    \label{fig:example}
\end{figure}

Among the saturated (strongly) stable ideals in $\mathbb{K}[x_0,x_1,x_2,x_3]$ that define schemes param-etrized by $\mathbf{Hilb}^{3t+2}(\mathbb{P}^3)$, we consider
\[
J = (x_3^2,x_3 x_2, x_2^3, x_3 x_1^2) \subset \mathbb{K}[x_0,x_1,x_2,x_3].
\]

The marked scheme $\mathbf{Mf}_J$ is an open subscheme of $\mathbf{Hilb}^{3t+2}(\mathbb{P}^3)$  by \cite[Corollary 4.3]{LR2}. Hence, the same holds for $\MSyzScheme{J}$ and $\MResScheme{J}$ by Corollary \ref{cor:isomorfismo}. Moreover, these subschemes are isomorphic to a dense open subset of $\mathbf{Hilb}^{3t+2}(\mathbb{P}^3)$, because there exist $J$-marked bases defining smooth points on each irreducible component. For example:
\begin{enumerate}
    \item the ideal
    \[
    I_1 = (x_3,x_2^3 + x_1^3 - x_0^3) \cap (x_3-x_0,x_2+x_0,x_1-x_0) \cap (x_3-x_0,x_2+x_0,x_1+x_0) 
    \]
    defines a smooth point in $\mathcal{C}_1 \cap \mathbf{Mf}_J$;
    \item the ideal
    \[
    I_2 = (x_3^2 - x_2x_0, x_3x_2 - x_1x_0,  x_3x_1 - x_2^2) \cap (x_3,x_2-x_0,x_1) 
    \]
    defines a smooth point in $\mathcal{C}_2 \cap \mathbf{Mf}_J$;
    \item the ideal
    \[
    I_3 = (x_3,x_2^2 - x_1^2 +x_0^2) \cap (x_3+x_0,x_2-x_0) 
    \]
    defines a smooth point in $\mathcal{C}_3 \cap \mathbf{Mf}_J$.
\end{enumerate}

The $J$-marked bases and the  $J$-resolutions of $I_1$, $I_2$ and $I_3$ are shown in Figure \ref{fig:example}. 
To verify the smoothness of the given points, one must compute the dimension of the tangent space to $\mathbf{Hilb}^{3t+2}(\mathbb{P}^3)$ at each point and check that it equals the dimension of the corresponding irreducible component.

\smallskip

As a consequence of density, the ideals defining $\mathbf{Mf}_J$, $\MSyzScheme{J}$, and $\MResScheme{J}$ admit a primary decomposition consisting of three components. In principle, the primary decomposition can be computated directly; however, due to the large number of parameters involved, such computations are highly resource-intensive (CPU time and RAM memory) and quickly become impractical.

Nonetheless, by combining theoretical insights with computational techniques, we can exploit the algebraic and geometric properties of generic elements of each component to predict the conditions that characterize them. These conditions can be used to parametrize an open subset of each component effectively.
%Instead, we can use the algebraic and geometric properties of the generic elements of each component to predict the conditions that identify components and exploit these conditions to obtain a parametrization of an open subset of each of them.

\medskip

First, let us consider the universal family parametrized by $\MResScheme{J}$. To simplify the notation, we denote by \lq$C$\rq~ the parameters used to describe the generators, by \lq$B$\rq~ the parameters used to describe the first syzygies, and by \lq$A$\rq~ the parameters used to describe the second syzygies. Let $\mathcal{R}$ be the polynomial ring $\kk[C,B,A][x_0,x_1,x_2,x_3]$, and consider the complex
\[
0 \xrightarrow{} \mathcal R(-5) \xrightarrow{\partial_2}\mathcal R(-3)  \oplus \mathcal R(-4)^3   \xrightarrow{\partial_1} \mathcal R(-2)^2\oplus \mathcal R(-3)^2 \xrightarrow{\partial_0}\mathcal I%\mathcal R\xrightarrow{}    \mathcal R/I_C 
\xrightarrow{} 0,
\]
where
\begin{itemize}
    \item $\partial_0 = \left[\begin{array}{cccc}  f_{x_3^2} & f_{x_3 x_2} & f_{x_2^3} & f_{x_3 x_1^2}\end{array}\right]$ describes the $J$-marked set 
 \begin{equation}\label{eq:finExMSet}{\tiny
    \begin{split}
    f_{x_3^2} = {}&  \uwave{x_{3}^{2}}+C_{0}x_{3}x_{1}+C_{1}x_{2}^{2}+C_{2}x_{3}x_{0}+C_{3}x_{2}x_{1}+C_{4}x_{1}^{2}+C_{5}x_{2}x_{0}+C_{6}x_{1}x_{0}+C_{7}x_{0}^{2},\\
f_{x_3 x_2} = {} & \uwave{x_{3}x_{2}}+C_{8}x_{3}x_{1}+C_{9}x_{2}^{2}+C_{10}x_{3}x_{0}+C_{11}x_{2}x_{1}+C_{12}x_{1}^{2}+C_{13}x_{2}x_{0}+C_{14}x_{1}x_{0}+C_{15}x_{0}^{2},\\
f_{x_2^3} = {}&\uwave{x_{2}^{3}}+C_{16}x_{2}^{2}x_{1}+C_{17}x_{3}x_{1}x_{0}+C_{18}x_{2}x_{1}^{2}+C_{19}x_{2}^{2}x_{0}+C_{20}x_{3}x_{0}^{2}+C_{21}x_{1}^{3}+C_{22}x_{2}x_{1}x_{0}+C_{23}x_{1}^{2}x_{0}+C_{24}x_{2}x_{0}^{2}+C_{25}x_{1}x_{0}^{2}+C_{26}x_{0}^{3},\\
f_{x_3 x_1^2} = {}&\uwave{x_{3}x_{1}^{2}}+C_{27}x_{2}^{2}x_{1}+C_{28}x_{3}x_{1}x_{0}+C_{29}x_{2}x_{1}^{2}+C_{30}x_{2}^{2}x_{0}+C_{31}x_{3}x_{0}^{2}+C_{32}x_{1}^{3}+C_{33}x_{2}x_{1}x_{0}+C_{34}x_{1}^{2}x_{0}+C_{35}x_{2}x_{0}^{2}+C_{36}x_{1}x_{0}^{2}+C_{37}x_{0}^{3}\\
    \end{split}}
    \end{equation}
    \item $\partial_1$ describes the marked set of first syzygies
    \begin{equation}\label{eq:delta1}{\tiny
        \left[\begin{array}{cccc}
      B_{0}x_{2}+B_{1}x_{1}+B_{2}x_{0}&(\partial_1)_{1,2}&(\partial_1)_{1,3}&B_{42}x_{1}^{2}+B_{43}x_{1}x_{0}+B_{44}x_{0}^{2}\\
      \uwave{x_{3}}+B_{3}x_{2}+B_{4}x_{1}+B_{5}x_{0}&(\partial_1)_{2,2}& (\partial_1)_{2,3}&B_{45}x_{1}^{2}+B_{46}x_{1}x_{0}+B_{47}x_{0}^{2}\\
      B_{6}&\uwave{x_{3}}+B_{20}x_{2}+B_{21}x_{1}+B_{22}x_{0}&B_{37}x_{2}+B_{38}x_{1}+B_{39}x_{0}&B_{48}x_{1}+B_{49}x_{0}\\
      B_{7}&B_{23}x_{1}+B_{24}x_{0}&\uwave{x_{3}}+B_{40}x_{1}+B_{41}x_{0}&\uwave{x_{2}}+B_{50}x_{1}+B_{51}x_{0}
      \end{array}\right]}
    \end{equation}
    where
    \[\tiny
    \left[\begin{array}{cc}(\partial_1)_{1,2} & (\partial_1)_{1,3} \\ (\partial_1)_{2,2} & (\partial_1)_{2,3}\end{array}\right] =
       \left[\begin{array}{cc} B_{8}x_{2}^{2}+B_{9}x_{2}x_{1}+B_{10}x_{1}^{2}+B_{11}x_{2}x_{0}+B_{12}x_{1}x_{0}+B_{13}x_{0}^{2} & B_{14}x_{2}^{2}+B_{15}x_{2}x_{1}+B_{16}x_{1}^{2}+B_{17}x_{2}x_{0}+B_{18}x_{1}x_{0}+B_{19}x_{0}^{2}\\
          B_{25}x_{2}^{2}+B_{26}x_{2}x_{1}+B_{27}x_{1}^{2}+B_{28}x_{2}x_{0}+B_{29}x_{1}x_{0}+B_{30}x_{0}^{2}& B_{31}x_{2}^{2}+B_{32}x_{2}x_{1}+B_{33}x_{1}^{2}+B_{34}x_{2}x_{0}+B_{35}x_{1}x_{0}+B_{36}x_{0}^{2}\end{array}\right]
    \]
    \item $\partial_2$ describes the marked set of second syzygies
    \begin{equation}\label{eq:delta2}{\tiny
        \left[\begin{array}{c}
      \vphantom{\left\{3\right\}}A_{0}x_{2}^{2}+A_{1}x_{2}x_{1}+A_{2}x_{1}^{2}+A_{3}x_{2}x_{0}+A_{4}x_{1}x_{0}+A_{5}x_{0}^{2}\\
      \vphantom{\left\{4\right\}}A_{6}x_{2}+A_{7}x_{1}+A_{8}x_{0}\\
      \vphantom{\left\{4\right\}}A_{9}x_{2}+A_{10}x_{1}+A_{11}x_{0}\\
      \vphantom{\left\{4\right\}}\uwave{x_{3}}+A_{12}x_{2}+A_{13}x_{1}+A_{14}x_{0}
      \end{array}\right]}
    \end{equation}
\end{itemize}
The support of each polynomial appearing in \eqref{eq:delta1} and \eqref{eq:delta2}  is defined according to Corollary~\ref{cor:suppSyz}. 
We obtain the polynomials defining the scheme $\MResScheme{J}$ by imposing the exactness of the complex. Let $\mathscr{S}^{(1)}$ and $\mathscr{S}^{(2)}$ denote the ideals generated by the conditions ensuring that $\partial_0 \circ \partial_1 = 0$ and $\partial_1 \circ \partial_2 = 0$, respectively. Then we have:
\begin{itemize}
    \item $\MResScheme{J} = \Spec\big( \kk[C,B,A]/(\mathscr{S}^{(1)} + \mathscr{S}^{(2)})\big) $ as in Proposition \ref{prop:elimSyzn};
    \item $\MSyzScheme{J} = \Spec\big( \kk[C,B]/\mathscr{S}^{(1)} \big) $ as in Theorem \ref{thm:reprFuncSyz};
    \item $\MFScheme{J} = \Spec\big( \kk[C]/\mathscr{U} \big)$ where $\mathscr{U} = \mathscr{S}^{(1)} \cap \kk[C]$ as in Corollary \ref{cor:eliminazione}.
\end{itemize}
The \textit{M2} ancillary file \href{http://www.paololella.it/software/equations-marked-schemes.m2}{\tt equations-marked-schemes.m2} provides the description of all ideals. The\hfill file\hfill \href{http://www.paololella.it/software/projectionFirstFactor-curves-P3.m2}{\tt projectionFirstFactor-curves-P3.m2}\hfill contains\hfill the\hfill description\hfill of\\ isomorphisms $\MResScheme{J} \to \MFScheme{J}$ and $\MSyzScheme{J} \to \MFScheme{J}$. Meanwhile, the file \href{http://www.paololella.it/software/projectionSecondFactor-curves-P3.m2}{\tt projectionSecond Factor-curves-P3.m2} contains the equations of the scheme $\MFScheme{J}^{\pi_2}$ (Theorem \ref{th:representation second factor}) and the description of the isomorphisms $\MResScheme{J} \to \MFScheme{J}^{\pi_2}$ and $\MSyzScheme{J} \to \MFScheme{J}^{\pi_2}$ (Theorem \ref{th:isomorphism 2}).

%We notice that from the polynomials in $\mathscr{S}^{(1)}+\mathscr{S}^{(2)}$, some parameters introduced in the syzygy matrices are uniquely determined. In fact:
%\begin{equation}\label{eq:constantvalues}{\footnotesize
%\begin{split}
%     B_0 = B_{14} = B_{27} = B_{45}= -1,&\quad B_{8} =B_{9}=B_{10}=B_{11}=B_{25}=B_{26}=B_{28}=B_{31}=B_{42}=B_{43}=B_{44} = 0,\\
%    A_2=1,&\quad A_9 = -1,\quad A_0 = A_1 = A_3 = A_6 = A_{12} = 0.
%\end{split}}
%\end{equation}

\medskip

\paragraph{\em First irreducible component.} The generic 1-dimensional scheme $Z_1$ parametrized by the component $\mathcal{C}_1$ is minimally generated by its $J$-marked basis, and the $J$-resolution is minimal. By semicontinuity of Betti numbers, the resolution remains minimal in an open neighborhood of $Z_1 \in \MResScheme{J}$. Consequently, the parameters $B_6$ and $B_7$ vanish on an open subset of $\mathcal{C}_1$ (compare \eqref{eq:delta1} and $\partial_1$ in the $J$-resolution in the left column of Figure \ref{fig:example}). 

The scheme $Z_1$ is not a locally Cohen-Macaulay (lCM for short) curve, as $Z_1$ has two isolated points. This geometric property is captured by the Hartshorne-Rao module
\[
M_{Z_1} = \mathsf{H}^1_{\bullet}(\mathcal{I}_{Z_1})=\bigoplus_{t \in \mathbb{Z}} \mathsf{H}^1 \big(\mathbb{P}^3,\mathcal{I}_{Z_1}(t)\big)
\]
(see \cite{Rao,MDP} for an introduction to this invariant). Indeed, a 1-dimensional scheme is a lCM curve if and only if its Hartshorne-Rao module is artinian. The Hartshorne-Rao module can be read off from the resolution as $(\textnormal{coker}\, \partial_2^T)^\ast$. In this case, up to a shift, $\textnormal{coker}\, \partial_2^T$ is a quotient ring with Hilbert function $(1,2,2,\ldots)$. The ideal generated by the entries of $\partial_2$ is
\[
(x_3 + A_{13}x_1 + A_{14}x_0,\ -x_2 + A_{10}x_1 + A_{11}x_0,\ A_7 x_1 + A_8 x_0,\ x_1^2 + A_4 x_1x_0 + A_5 x_0^2) 
\]
and the quotient has Hilbert function $(1,2,2,\ldots)$ if and only if $A_7 = A_8 = 0$.

Hence, the component $\mathcal{U}_1 \subseteq \MResScheme{J}$ birational to the irreducible component $\mathcal{C}_1 $ lies in the hyperplane defined by equations
\[
B_6 = B_7 = A_7 = A_8 = 0.
\]
The projection onto the second factor, $\pi_2(\mathcal{U}_1) \subset \MFScheme{J}^{\pi_2}$, is contained in the hyperplane defined by
\[
B_6 = B_7 = B_{48} = B_{49} = 0.
\]
Adding these four equations to those defining $\MFScheme{J}^{\pi_2}$, we obtain an open subset of $\mathcal{C}_1$ isomorphic to $\mathbb{A}^{18}$. Specifically, it is defined by the polynomials
\[
\begin{split}
 &B_{45}+1,\:B_{44},\:B_{43},\:B_{42},\:B_{37},\:B_{31},\:B_{28},\:B_{27}+1,\:B_{26},\:B_{25},\:B_{14}+1,\:B_{11},\:B_{10},\:B_{9},\:B_{8},\:B_{0}+1,\\
 &B_{48},\:B_{32},\:B_{29}-B_{46
      },\:B_{1}+B_{50},\:B_{49},\:B_{34},\:B_{30}-B_{47},\:B_{20}+B_{33},\:B_{3}-B_{33},\:B_{2}+B_{51},\:B_{38},\\
      &B_{35}+B_{33}B_{46},\:B_{4}-B_{40}-B_{33}B_{50},\:B_{39},\:B_{36}+B_{33}B_{47},\:B_{7},\:B_{6},\:B_{5}-B_{41}-B_{33}B_{51}.
\end{split}
\]

Similarly, for the projection onto the first factor, $\pi_1(\mathcal{U}_1) \subset \MFScheme{J}$, the scheme $\pi_1(\mathcal{U}_1)$ lies in the affine variety defined by
\[
C_1 + C_9^2 = C_{12} - C_8C_{11} - C_8^2C_9 = C_{27} = C_{30} = 0.
\]
%Also in this case, the open subset of the component $\mathcal{C}_1$ is isomorphic to $\mathbb{A}^{18}$. 
The equations in this case are more complex and will not be included here. Readers interested in the full list of polynomials can consult the \textit{M2} ancillary file \href{http://www.paololella.it/software/rationalParametrization-planeCubic+twoPoints.m2}{\tt rationalParametrization- planeCubic+twoPoints.m2}.

\medskip

\paragraph{\em Second irreducible component.} For the generic 1-dimensional scheme $Z_2$ parametrized by the component $\mathcal{C}_2$, the corresponding $J$-resolution is not minimal, hence either $B_6 \neq 0$ or $B_7 \neq 0$. Moreover, the scheme $Z_2$ is not a locally Cohen-Macaualay curve, since it has one isolated point. The Hartshorne-Rao module $M_{Z_2}$ is therefore not artinian, and, up to a shift, $\textnormal{coker}\, \partial_2^T$ is a quotient ring with Hilbert function $(1,1,1,\ldots)$. In order to obtain such a Hilbert function, the following conditions must be satisfied:
\begin{itemize}
    \item $A_7 \neq 0$ or $A_8 \neq 0$ (otherwise the Hilbert function of the Hartshorne-Rao module would be $(1,2,2,\ldots)$);
    \item the polynomial $x_1^2 + A_4 x_1x_0 + A_5x_0^2$ must be a linear combination of $x_1(A_7x_1+A_8x_0)$ and $x_0(A_7x_1+A_8x_0)$, that is,
    \[
    \det\left[\begin{array}{ccc} 1 & A_4 & A_5 \\ A_7 & A_8 & 0 \\ 0 & A_7 & A_8\end{array}\right] = 0.
    \]
\end{itemize}

Hence, the component $\mathcal{U}_2 \subset \MResScheme{J}$ birational to the second irreducible component $\mathcal{C}_2$ is contained in the hypersurface defined by 
\[
    A_{5}A_{7}^{2}-A_{4}A_{7}A_{8}+A_{8}^{2} = 0
\]
and an open subset of $\mathcal{U}_2$ is contained in the open subset $\MResScheme{J}\setminus V(B_6,B_7,A_7,A_8)$.

Considering the projections onto the first and second factors, we obtain analogous inclusion relations expressed in terms of the parameters \lq$C$\rq and \lq$B$\rq.
%\[
%\begin{split}
%\pi_1(\mathcal{U}_2) \subseteq \MFScheme{J}\cap V(C_{31}C_{27}^2+C_{30}^2-C_{30}C_{28}C_{27}) \setminus V(C_{27}) \\
%\pi_2(\mathcal{U}_2) \subseteq \MFScheme{J}^{\pi_2}\cap V(B_{47}B_{48}^{2}-B_{49}^{2}-B_{49}B_{46}B_{48}) \setminus V(B_{48}) \\
%\end{split}
%\]
In both cases, it follows that $\mathcal{U}_2$ is rational of dimension 15. The complete list of equations can be found in the \textit{M2} ancillary file \href{http://www.paololella.it/software/rationalParametrization-twistedCubic+Point.m2}{\tt rationalParametrization-twistedCubic+Point.m2}.

\medskip

\paragraph{\em Third irreducible component.} For the generic 1-dimensional scheme $Z_3$ parametrized by the component $\mathcal{C}_3$, the corresponding $J$-resolution is  minimal, and the Hartshorne-Rao module $M_{Z_3}$ is artinian. Hence, the component $\mathcal{U}_3 \subset \MResScheme{J}$ birational to the irreducible component $\mathcal{C}_3$ is contained in the hyperplane $V(B_6,B_7)$ and it has an open subset contained in the complement of
\[
V(A_7) \cup V(A_8) \cup V(A_{5}A_{7}^{2}-A_{4}A_{7}A_{8}+A_{8}^{2}).
\]
Again, considering the projections onto the first and second factors, we obtain analogous inclusion relations expressed in terms of the parameters \lq$C$\rq and \lq$B$\rq.
%Considering the projections on the first and second factors, direct computation yields 
%\[
%\begin{split}
%&\pi_1(\mathcal{U}_3) \subseteq \MFScheme{J}\cap V(C_1 + C_9^2,\: C_{12} - C_8C_{11} - C_8^2C_9 ) \setminus V(C_{27}) \%\
%&\pi_2(\mathcal{U}_3) \subseteq \MFScheme{J}^{\pi_2}\cap V(B_6,\: B_7) \setminus V(B_{48}) \\
%\end{split}
%\]
In both cases, one verifies that $\mathcal{U}_3$ is rational of dimension 12. The complete list of defining equations can be found in the \textit{M2} ancillary file \href{http://www.paololella.it/software/rationalParametrization-twistedCubic+Point.m2}{\tt rationalParametrization-planeConic+Line.m2}.

%\[\small
%\begin{split}
%& B_{1}+B_{50},\ B_{2}+B_{51},\ B_{3}-B_{33}-B_{48}B_{15}-B_{48}B_{50},\ B_{4}-B_{40}-B_{48}B_{16}-B_{33}B_{50},\\
%& B_{5}-B_{41}-B_{16}B_{49}-B_{33}B_{51}+B_{49}B_{15}B_{50}+B_{49}B_{50}^{2}-B_{48}B_{51}B_{15}-B_{48}B_{51}B_{50},\ B_{6},\ B_{7},\\
%& B_{12},\ B_{13},\ B_{18}+B_{16}B_{46}-B_{17}B_{50}-B_{51}B_{15}-2\,B_{51}B_{50}-B_{15}B_{46}B_{50}-B_{46}B_{50}^{2},\\
%& B_{19}+B_{16}B_{47}-B_{17}B_{51}-B_{47}B_{15}B_{50}-B_{47}B_{50}^{2}-B_{51}^{2},\ B_{20}+B_{33}+B_{48}B_{15}+B_{48}B_{50},\\
%& B_{21}-B_{40}+B_{33}B_{50}+B_{48}B_{15}B_{50}+B_{48}B_{50}^{2},\ B_{22}-B_{41}+B_{33}B_{51}+B_{48}B_{51}B_{15}+B_{48}B_{51}B_{50},\\
%& B_{23},\ B_{24},\ B_{29}-B_{46},\ B_{30}-B_{47},\ 
%& B_{35}+B_{48}B_{17}+B_{33}B_{46}+B_{49}B_{15}+B_{49}B_{50}+B_{48}B_{51}+B_{48}B_{15}B_{46}+B_{48}B_{46}B_{50},\\
%&B_{36}+B_{17}B_{49}+B_{33}B_{47}+B_{48}B_{47}B_{15}+B_{48}B_{47}B_{50}+B_{49}B_{51},\ B_{37},\\
%&B_{38}-B_{48}B_{33}-B_{48}^{2}B_{15}-B_{48}^{2}B_{50},\ B_{39}-B_{33}B_{49}-B_{48}B_{49}B_{15}-B_{48}B_{49}B_{50} 
%\end{split}
%\]

%\nocite{*}
%\bibliographystyle{amsplain}
%\bibliography{CompSyzBiblio}

\begin{thebibliography}{10}

\bibitem{ABRS}
Mario Albert, Cristina Bertone, Margherita Roggero, and Werner~M. Seiler,
  \emph{Computing {Q}uot schemes via marked bases over quasi-stable modules},
  J. Algebra \textbf{550} (2020), 432--470.

\bibitem{AHH}
Annetta Aramova, J\"{u}rgen Herzog, and Takayuki Hibi, \emph{Ideals with stable
  {B}etti numbers}, Adv. Math. \textbf{152} (2000), no.~1, 72--77.

\bibitem{BCLR13}
Cristina Bertone, Francesca Cioffi, Paolo Lella, and Margherita Roggero,
  \emph{Upgraded methods for the effective computation of marked schemes on a
  strongly stable ideal}, J. Symbolic Comput. \textbf{50} (2013), 263--290.

\bibitem{BCOS}
Cristina Bertone, Francesca Cioffi, Matthias Orth, and Werner~M. Seiler,
  \emph{Open covers and lex points of {H}ilbert schemes over quotient rings via
  relative marked bases}, Exp. Math. \textbf{34} (2025), no.~1, 38--52.

\bibitem{macaulay}
Cristina Bertone, Francesca Cioffi, and Margherita Roggero, \emph{Macaulay-like
  marked bases}, J. Algebra Appl. \textbf{16} (2017), no.~5, 1750100, 36.

\bibitem{SmoothGore}
\bysame, \emph{Smoothable {G}orenstein points via marked schemes and
  double-generic initial ideals}, Exp. Math. \textbf{31} (2022), no.~1,
  120--137.

\bibitem{BLR13}
Cristina Bertone, Paolo Lella, and Margherita Roggero, \emph{A {B}orel open
  cover of the {H}ilbert scheme}, J. Symbolic Comput. \textbf{53} (2013),
  119--135.

\bibitem{BE1974}
David~A. Buchsbaum and David Eisenbud, \emph{Some structure theorems for finite
  free resolutions}, Advances in Math. \textbf{12} (1974), 84--139.

\bibitem{CS2015}
Giulio Caviglia and Enrico Sbarra, \emph{Zero-generic initial ideals},
  Manuscripta Math. \textbf{148} (2015), no.~3-4, 507--520.

\bibitem{CMR2015}
Michela Ceria, Teo Mora, and Margherita Roggero, \emph{Term-ordering free
  involutive bases}, J. Symbolic Comput. \textbf{68} (2015), no.~part 2,
  87--108.

\bibitem{CMR2019}
\bysame, \emph{A general framework for {N}oetherian well ordered polynomial
  reductions}, J. Symbolic Comput. \textbf{95} (2019), 100--133.

\bibitem{CR11}
Francesca Cioffi and Margherita Roggero, \emph{Flat families by strongly stable
  ideals and a generalization of {G}r\"obner bases}, J. Symbolic Comput.
  \textbf{46} (2011), no.~9, 1070--1084.

\bibitem{Conca}
Aldo Conca, \emph{Koszul homology and extremal properties of {G}in and {L}ex},
  Trans. Amer. Math. Soc. \textbf{356} (2004), no.~7, 2945--2961.

\bibitem{CHH}
Aldo Conca, J\"{u}rgen Herzog, and Takayuki Hibi, \emph{Rigid resolutions and
  big {B}etti numbers}, Comment. Math. Helv. \textbf{79} (2004), no.~4,
  826--839.

\bibitem{E}
David Eisenbud, \emph{Commutative algebra}, Graduate Texts in Mathematics, vol.
  150, Springer-Verlag, New York, 1995, With a view toward algebraic geometry.

\bibitem{EK}
Shalom Eliahou and Michel Kervaire, \emph{Minimal resolutions of some monomial
  ideals}, J. Algebra \textbf{129} (1990), no.~1, 1--25.

\bibitem{GGGL}
Franco Giovenzana, Luca Giovenzana, Michele Graffeo, and Paolo Lella, \emph{A
  counterexample to the parity conjecture}, Algebr. Geom. \textbf{12} (2025),
  no.~2, 173--188.

\bibitem{M2}
Daniel~R. Grayson and Michael~E. Stillman, \emph{Macaulay2, a software system
  for research in algebraic geometry}, Available at
  \url{http://www2.macaulay2.com}.

\bibitem{HSS2018}
Amir Hashemi, Michael Schweinfurter, and Werner~M. Seiler, \emph{Deterministic
  genericity for polynomial ideals}, J. Symbolic Comput. \textbf{86} (2018),
  20--50.

\bibitem{HH1999}
J\"{u}rgen Herzog and Takayuki Hibi, \emph{Componentwise linear ideals}, Nagoya
  Math. J. \textbf{153} (1999), 141--153.

\bibitem{LR2}
Paolo Lella and Margherita Roggero, \emph{On the functoriality of marked
  families}, J. Commut. Algebra \textbf{8} (2016), no.~3, 367--410.

\bibitem{MDP}
Mireille Martin-Deschamps and Daniel Perrin, \emph{Sur les bornes du module de
  {R}ao}, C. R. Acad. Sci. Paris S\'er. I Math. \textbf{317} (1993), no.~12,
  1159--1162.

\bibitem{Rao}
A.~Prabhakar~Rao, \emph{Liaison among curves in {${\bf P}\sp{3}$}}, Invent.
  Math. \textbf{50} (1978/79), no.~3, 205--217.

\bibitem{RS}
Alyson Reeves and Bernd Sturmfels, \emph{A note on polynomial reduction}, J.
  Symbolic Comput. \textbf{16} (1993), no.~3, 273--277.

\bibitem{Seiler2009I}
Werner~M. Seiler, \emph{A combinatorial approach to involution and
  {$\delta$}-regularity. {I}. {I}nvolutive bases in polynomial algebras of
  solvable type}, Appl. Algebra Engrg. Comm. Comput. \textbf{20} (2009),
  no.~3-4, 207--259.

\bibitem{Seiler2009II}
\bysame, \emph{A combinatorial approach to involution and
  {$\delta$}-regularity. {II}. {S}tructure analysis of polynomial modules with
  {P}ommaret bases}, Appl. Algebra Engrg. Comm. Comput. \textbf{20} (2009),
  no.~3-4, 261--338.

\end{thebibliography}

\providecommand{\bysame}{\leavevmode\hbox to3em{\hrulefill}\thinspace}
\providecommand{\MR}{\relax\ifhmode\unskip\space\fi MR }
% \MRhref is called by the amsart/book/proc definition of \MR.
\providecommand{\MRhref}[2]{%
  \href{http://www.ams.org/mathscinet-getitem?mr=#1}{#2}
}
\providecommand{\href}[2]{#2}

\end{document}